\newtheorem{proposition}{Proposition}[section]
\newtheorem{conjecture}{Conjecture}[section]
\newtheorem{propriete}{Propriété}[section]
\newtheorem{theoreme}{Théorème}[section]
\newtheorem{corollaire}{Corollaire}[section]
\newtheorem{remarque}{Remarque}[section]
\newtheorem{lemme}{Lemme}[section]
\DeclareMathOperator{\Hom}{\mathnormal{Hom}}
\begin{document}

\title{Formule de Plancherel sur $GL_n \times GL_n \backslash GL_{2n}$}
\date{\today}
\author{Nicolas Duhamel}
\maketitle

\section{Introduction}

Soit $F$ un corps $p$-adique, $G$ un groupe réductif déployé sur $F$ et $X = H \backslash G$ une variété sphérique homogène admettant une mesure invariante. 
Sakellaridis et Venkatesh \cite{sak-ven} introduisent une donnée radicielle associé à $X$, qui n'existe que sous certaines conditions sur $X$. On peut associer à la donnée radicielle duale un groupe réductif complexe $\check{G}_X$ qu'ils appellent le groupe dual de la variété sphérique $X$. On note $G_X$ le groupe réductif déployé sur $F$ dont le groupe dual est $\check{G}_X$, le groupe $G_X$ est associé à la donnée radicielle de $X$.
Sakellaridis et Venkatesh introduisent aussi un morphisme de groupes algébriques $\iota_X : \check{G}_X \times SL_2(\mathbb{C}) \rightarrow \check{G}$ sous certaines hypothèses. L'existence de l'application $\iota_X$ a ensuite été vérifiée par Knop et Schalke \cite{knop} sans ces hypothèses. Supposons que $\iota_X$ est trivial sur $SL_2(\mathbb{C})$.

La correspondance de Langlands locale pour $G$ donne une application surjective $Irr(G) \rightarrow \Phi(G)$ à fibres finies entre l'ensemble $Irr(G)$ des classes d'isomorphisme de représentations irréductibles de $G$ et l'ensemble $\Phi(G) = \{\phi : W_F' \rightarrow {}^LG \text{ admissible}\}$ des paramètres de Langlands, où $W_F'$ est le groupe de Weil-Deligne de $F$. La correspondance de Langlands locale donne une partition de $Irr(G)$ en $L$-paquets
\begin{equation}
Irr(G)  = \bigcup_{\phi \in \Phi(G)} \Pi^G(\phi),
\end{equation}
où $\Pi^G(\phi)$ est l'ensemble des classes d'isomorphisme de représentations qui ont pour paramètre de Langlands $\phi$. La correspondance de Langlands locale est prouvée pour $GL_n(F)$ par Harris-Taylor \cite{harris}, Henniart \cite{henniart2}, Scholze \cite{scholze} et pour les groupes orthogonaux impairs par Arthur \cite{arthur}.
Rappellons la

\begin{conjecture}[Sakellaridis-Venkatesh {\cite[Conjecture 16.2.2]{sak-ven}}]
Il existe un isomorphisme $G$-équivariant de représentations unitaires
\begin{equation}
L^2(X) \simeq \int^{\oplus}_{\Phi_{temp}(G_X)} \mathcal{H}_\phi d\phi,
\end{equation}
où $\Phi_{temp}(G_X)$ est l'ensemble des paramètres de Langlands tempérés de $G_X$ modulo $\check{G}_X$-conjugaison, $d\phi$ est dans la classe naturelle des mesures sur $\Phi_{temp}(G_X)$ et $\mathcal{H}_\phi$ est une somme directe sans multiplicité de représentations dans $\Pi^G(\iota_X \circ \phi)$.
\end{conjecture}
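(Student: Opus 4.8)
The plan is to make the right-hand side completely explicit and then match it against the known Plancherel decomposition of the symmetric space $X = GL_n\times GL_n\backslash GL_{2n}$. Since $G=GL_{2n}$, the relevant dual datum is $\check{G}_X = Sp_{2n}(\mathbb{C})$, so that $G_X = SO_{2n+1}$ (split) and $\iota_X$ is the standard embedding $Sp_{2n}(\mathbb{C})\hookrightarrow GL_{2n}(\mathbb{C})$, trivial on $SL_2(\mathbb{C})$ as assumed. Because the $L$-packets for $GL_{2n}$ are singletons, each $\mathcal{H}_\phi$ is either $0$ or the single representation $\pi_{\iota_X\circ\phi}$ of $GL_{2n}(F)$ attached to the symplectic parameter $\iota_X\circ\phi$; the multiplicity-free requirement then means $\pi_{\iota_X\circ\phi}$ must occur with multiplicity exactly one. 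Thus the statement reduces to three assertions: (i) the support of the Plancherel measure of $L^2(X)$ is exactly the set of $\pi$ with tempered symplectic parameter; (ii) each such $\pi$ occurs with multiplicity one; and (iii) the resulting measure lies in the natural class pushed forward from $\Phi_{temp}(SO_{2n+1})$ via $\phi\mapsto\pi_{\iota_X\circ\phi}$.

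I would treat (ii) first, since it is local-algebraic rather than analytic. The space $\Hom_{GL_n\times GL_n}(\pi,\mathbb{C})$ of Friedberg--Jacquet (linear) functionals is at most one-dimensional for irreducible $\pi$, by the local multiplicity-one theorem for this pair. Combined with the abstract Plancherel formula for symmetric spaces, which shows that the multiplicity of a tempered $\pi$ in $L^2(X)$ is governed by such $\Hom$-spaces (for $X$ and for its boundary degenerations), this forces multiplicity-freeness.

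For (i) and (iii) the main tool is the Plancherel formula for $p$-adic symmetric spaces (Delorme; Sakellaridis--Venkatesh): it writes $L^2(X)$ as a direct integral, over the subsets $\Theta$ of the simple spherical roots --- here a root system of rank $n$ and type $C_n$ --- of representations unitarily induced from the relative discrete series of the boundary degenerations $X_\Theta$, with an explicit density built from relative $c$-functions. The combinatorics of the $\Theta$ and of these inductions mirrors exactly the Levi subgroups $GL_{n_1}\times\cdots\times GL_{n_k}\times Sp_{2m}$ of $Sp_{2n}(\mathbb{C})$ and the parabolic induction of tempered parameters of $SO_{2n+1}$. I would therefore reduce to: (a) identifying the relative discrete series of $X$ and of each $X_\Theta$ with discrete series of $GL$-factors together with the relatively square-integrable spectrum attached to discrete symplectic parameters; and (b) checking that the induction bookkeeping matches on both sides. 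Step (a) rests on the key input that a discrete series of $GL_{2m}(F)$ is $(GL_m\times GL_m)$-distinguished if and only if its parameter is symplectic (equivalently, it admits a Shalika model), which, with the Zelevinsky classification, pins down the relative discrete spectrum in terms of square-integrable symplectic parameters; the Langlands correspondences for $GL_{2m}$ and for $SO_{2m+1}$ then translate this into an equality of parameter sets.

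The main obstacle is (iii): showing that the symmetric-space Plancherel density coincides, under the parametrization by $\Phi_{temp}(SO_{2n+1})$, with the natural measure $d\phi$. Concretely this requires computing the relative $c$-function of $X$ --- equivalently the ratios of $\gamma$-factors entering the scattering and Harish-Chandra $\mu$-functions attached to $X$ --- and comparing it with the product of Plancherel densities governing parabolic induction for $SO_{2n+1}$. The unramified input is a Casselman--Shalika/Gindikin--Karpelevich computation on $X$, matching the Friedberg--Jacquet unramified calculation whose numerator and denominator are standard $L$-factors of $\pi$; but converting this into an equality of measure classes on the whole tempered locus, controlling the normalizations uniformly across all boundary degenerations, and ruling out spurious discrete (residual) contributions arising from the $L(1/2,\pi)$-type poles are the delicate points where the real work lies.
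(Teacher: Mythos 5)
Your reduction of the statement to (i) identification of the support, (ii) multiplicity one, and (iii) identification of the measure class is sound, and your input for (ii) (Jacquet--Rallis uniqueness of linear periods, fed into the disintegration of $L^2(X)$) is correct. The genuine gap is that for (i) and (iii) you invoke, as if it were available off the shelf, a Plancherel formula for $X = GL_n \times GL_n \backslash GL_{2n}$ ``with an explicit density built from relative $c$-functions'', together with a classification of the relative discrete series of $X$ and of all its boundary degenerations $X_\Theta$ -- and you then concede that matching this density against the natural measure class on $\Phi_{temp}(SO_{2n+1})$ is ``where the real work lies''. No such explicit density computation exists in the literature for this $X$: the Delorme/Sakellaridis--Venkatesh theory gives an abstract decomposition into Bernstein components attached to the $X_\Theta$, but the scattering operators, the relative discrete spectra of the degenerations, and above all the comparison of the resulting density with the $SO(2n+1)$-side measure are precisely what has to be proven; the unramified Casselman--Shalika/Friedberg--Jacquet computation you mention only constrains the measure at unramified points and cannot by itself exclude residual discrete contributions or pin down the density uniformly over all $\Theta$. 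In short, your text is a program whose central analytic step is deferred rather than carried out, so it does not constitute a proof.

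For contrast, the paper does not touch the boundary-degeneration machinery at all. It first transfers the problem to the Shalika model by an explicit $GL_{2n}$-equivariant isometry $L^2(GL_n \times GL_n \backslash GL_{2n}) \simeq L^2(H_n \backslash GL_{2n}, \theta)$ (lemme \ref{lemmeiso}, a partial Fourier transform in the unipotent variable), and then proves an \emph{explicit} Plancherel formula there (th\'eor\`eme \ref{thPlanch}). The measure identification you defer is done concretely by: the Jacquet--Shalika zeta integrals $J(s,W,\phi)$ and their functional equation; the comparison of Jacquet--Shalika and Artin exterior-square $\gamma$-factors up to a unimodular constant (th\'eor\`eme \ref{egalitegamma}, proved by globalization); an unfolding/Fourier-inversion identity (proposition \ref{unfolding}); the Whittaker--Plancherel formula of Beuzart-Plessis; and the spectral limit of proposition \ref{limitespectrale}, which converts the $PGL_{2n}$ Plancherel measure weighted by $\gamma(s,\pi,\Lambda^2,\psi)^{-1}$, as $s \rightarrow 0^+$, into the $SO(2n+1)$ Plancherel measure supported exactly on the transfer locus -- this is simultaneously the proof of your points (i) and (iii). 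If you wished to salvage your route, you would have to establish the relative discrete series classification for each $X_\Theta$ and the density comparison yourself, and the available techniques for the latter essentially pass through the same zeta-integral and $\gamma$-factor analysis that the paper uses.
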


On note $Temp(G)$ (resp. $Temp(G_X)$) l'ensemble des classes d'isomorphisme de représentations irréductibles tempérées de $G$ (resp. $G_X$). Supposons la correspondance de Langlands locale pour $G_X$, on dispose alors d'une correspondance fonctorielle $T_X : Temp(G_X) \rightarrow Temp(G)$. Cette correspondance associe à une classe d'isomorphisme de représentations tempérées de $G_X$ un ensemble fini de classes d'isomorphisme de représentations tempérées de $G$. On obtient alors la

\begin{conjecture}[Sakellaridis-Venkatesh]
On note $d\sigma$ la mesure spectrale sur $G_X$ (voir section \ref{mesures}). Supposons que la mesure spectrale $d\sigma$ sur $G_X$ se descend en une mesure sur $Temp(G_X)\slash{\sim}$, où $\sim$ est la relation d'équivalence "égalité des paramètres de Langlands". Alors il existe un isomorphisme $G$-équivariant de représentations unitaires
\begin{equation}
L^2(X) \simeq \int^{\oplus}_{Temp(G_X)/\sim} \widetilde{T}_X(\sigma) d\sigma,
\end{equation}
où $\widetilde{T}_X(\sigma)$ est une somme directe sans multiplicité de représentations dans $T_X(\sigma)$.
\end{conjecture}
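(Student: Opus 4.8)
The plan is to obtain this statement from the disintegration of the preceding conjecture by passing from tempered Langlands parameters of $G_X$ to tempered representations of $G_X$ taken modulo $\sim$, the genuinely new input being the identification of the measure. Throughout one uses the explicit shape of the data in our case: $G = GL_{2n}$, $X = GL_n\times GL_n\backslash GL_{2n}$, and $G_X = SO_{2n+1}$, so that $\check G_X = Sp_{2n}(\mathbb{C})$, the map $\iota_X$ is the standard embedding $Sp_{2n}(\mathbb{C})\hookrightarrow GL_{2n}(\mathbb{C})$ (trivial on $SL_2(\mathbb{C})$ as assumed), and $T_X$ is the functorial transfer attached to $\iota_X$ through the local Langlands correspondences, namely Arthur's for $G_X$ and Harris--Taylor/Henniart/Scholze's for $G$. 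Recall also that the tempered representations of $GL_{2n}$ occurring in $L^2(X)$ are exactly those distinguished by $GL_n\times GL_n$, which by the local theory of linear (equivalently Shalika) models are precisely the transfers $T_X(\sigma)$; the local multiplicity-one property for these models is what will produce the \emph{sans multiplicit\'e} assertion.

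First I would make the reindexing precise. Since the correspondence $Irr(G)\to\Phi(G)$ is a bijection for $G = GL_{2n}$, each packet $\Pi^G(\iota_X\circ\phi)$ is a singleton: it is the unique irreducible tempered representation of $GL_{2n}$ whose parameter is $\iota_X\circ\phi$, that is, exactly $T_X(\sigma)$ for any $\sigma$ with parameter $\phi$. For $G_X = SO_{2n+1}$ the fibres of $Temp(G_X)\to\Phi_{temp}(G_X)$ are the (in general non-trivial) $L$-packets, and $\sim$ identifies two tempered representations precisely when they have the same parameter. Hence $\sigma\mapsto\phi_\sigma$ induces a bijection $Temp(G_X)/{\sim}\ \cong\ \Phi_{temp}(G_X)$, under which $\widetilde{T}_X(\sigma)$ -- a multiplicity-free sum over the singleton $T_X(\sigma)$ -- is canonically the representation $\mathcal{H}_{\phi_\sigma}$ of the previous conjecture. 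This matches the fibres of the two direct integrals.

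It then remains to match the measures, and this is where the real work lies. Using the hypothesis, the measure $d\sigma$ built in Section~\ref{mesures} descends to $Temp(G_X)/{\sim}$, hence transports under the bijection above to a measure on $\Phi_{temp}(G_X)$; the point to establish is that it lies in the natural class of $d\phi$. I would prove this by an explicit Plancherel density computation. The space $\Phi_{temp}(G_X)$ is coordinatised by compact tori of unramified twists of discrete parameters of the Levi subgroups of $SO_{2n+1}$, and against the invariant measure on these tori the density of $d\sigma$ is, up to a smooth nowhere-vanishing factor, the Harish--Chandra $\mu$-function of $G_X$; the latter differs from the factor governing $d\phi$ only by ratios of local $L$- and $\gamma$-factors read off from $\iota_X$ via a Gindikin--Karpelevich / Macdonald-type formula for $X$. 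Because these factors are smooth and strictly positive on the tempered (unitary) locus, and because the natural class is defined exactly up to such densities, $d\sigma$ and $d\phi$ define the same measure class. The local multiplicity-one of linear models enters again here, guaranteeing that the descended $d\sigma$ is a genuine positive density rather than merely a class and that no additional multiplicities are introduced.

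Finally I would assemble the unitary isomorphism. Two direct integrals with canonically isomorphic fibres over a common base, taken against measures in the same class, are unitarily isomorphic: one rescales the fibrewise isomorphism by the square root of the Radon--Nikodym derivative $d\sigma/d\phi$. Combining this with the preceding conjecture yields
\[
L^2(X)\ \simeq\ \int^{\oplus}_{Temp(G_X)/\sim}\widetilde{T}_X(\sigma)\,d\sigma .
\]
The main obstacle is the identification of the third paragraph: showing that the spectral density of $L^2(X)$ at a tempered parameter coincides, up to the expected local factors, with the Plancherel density of $G_X = SO_{2n+1}$. This rests on the explicit computation of Section~\ref{mesures}, which in turn leans on the Gindikin--Karpelevich / Macdonald formula for the spherical variety and on the comparison of the Harish--Chandra $\mu$-functions of $GL_{2n}$ and of $SO_{2n+1}$ through $\iota_X$; everything else is formal reindexing and the singleton structure of $GL_{2n}$-packets.
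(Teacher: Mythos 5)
Your proposal rests on a step that makes it circular rather than a proof: you take the preceding conjecture (the parameter-indexed decomposition $L^2(X)\simeq\int^\oplus_{\Phi_{temp}(G_X)}\mathcal{H}_\phi\,d\phi$) as an input and reduce the present statement to it by reindexing. That conjecture is unproven, and in the case you immediately specialize to ($X=GL_n\times GL_n\backslash GL_{2n}$, $G_X=SO(2n+1)$) assuming it amounts to assuming essentially the spectral decomposition one is trying to establish. The paper never argues this way: the general statement is left as a conjecture, and the special case (th\'eor\`eme \ref{conjSV}) is proved unconditionally by an entirely different route --- one transfers the problem to the Shalika model through the $G_{2n}$-equivariant isometry $L^2(G_n\times G_n\backslash G_{2n})\simeq L^2(H_n\backslash G_{2n},\theta)$ (lemme \ref{lemmeiso}), and then proves an \emph{explicit} Plancherel formula there (th\'eor\`eme \ref{thPlanch}), whose ingredients are the Jacquet--Shalika zeta integrals, the comparison of Jacquet--Shalika and Artin exterior-square $\gamma$-factors proved by globalization (th\'eor\`eme \ref{egalitegamma}), the spectral limit formula relating the $PG_{2n}$-Plancherel measure weighted by $\gamma(s,\pi,\Lambda^2,\psi)^{-1}$ to the $SO(2n+1)$-Plancherel density (proposition \ref{limitespectrale}), the unfolding identity (proposition \ref{unfolding}), and the uniqueness of Shalika functionals. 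None of this machinery appears in your plan, and nothing in your plan replaces it.

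Even granting the first conjecture, your third paragraph --- the one you yourself identify as ``where the real work lies'' --- is asserted, not proved, and the tools you invoke do not exist in the paper. Section \ref{mesures} contains no computation of the spectral density of $L^2(X)$; it only fixes normalizations of measures on $Temp(G)$, so ``the explicit computation of Section \ref{mesures}'' cannot carry the identification you need. No Gindikin--Karpelevich/Macdonald-type formula for this spherical variety, nor any comparison of Harish--Chandra $\mu$-functions of $GL_{2n}$ and $SO_{2n+1}$ through $\iota_X$, is established anywhere (or by you); the actual identification of the spectral measure as $\frac{|\gamma^*(0,\sigma,Ad,\psi)|}{|S_\sigma|}\,d\sigma$, i.e.\ as the Plancherel measure of $SO(2n+1)$, is precisely the hard analytic content of th\'eor\`eme \ref{introPlanchExp}. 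Two smaller but genuine defects: the first conjecture specifies $d\phi$ only up to its natural measure class and does not pin down the fibres ($\mathcal{H}_\phi$ is a multiplicity-free sum of members of the packet, which as stated may be empty on a set of positive measure), so even the ``formal reindexing'' does not by itself yield fibres canonically equal to $T_X(\sigma)$; and the claim that the tempered representations occurring in $L^2(X)$ are exactly the transfers $T_X(\sigma)$ is not a known local input you can quote --- it is part of the conclusion of the paper's theorem.
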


Spécifions maintenant au cas où $G = GL_{2n}$ et $X = GL_n \times GL_n \backslash GL_{2n}$. On a $\check{G}_X = Sp_{2n}$ et $G_X = SO(2n+1)$.
La correspondance de Langlands locale est prouvée pour $G$ et $G_X$ par Harris-Taylor \cite{harris}, Henniart \cite{henniart2} et Arthur \cite{arthur}.
De plus, la mesure $d\sigma$ se descend à $Temp(G_X)\slash{\sim}$. L'essentiel de notre travail consiste alors à prouver le
\begin{theoreme}
\label{conjSV}
Il existe un isomorphisme $GL_{2n}$-équivariant de représentations unitaires
\begin{equation}
L^2(GL_n \times GL_n \backslash GL_{2n}) \simeq \int^{\oplus}_{Temp(SO(2n+1))/\sim} T(\sigma) d\sigma,
\end{equation}
où $d\sigma$ est la mesure spectrale sur $SO(2n+1)$ et $T : {Temp(SO(2n+1))\slash{\sim}} \rightarrow Temp(GL_{2n})$ est l'application de transfert provenant de la correspondance de Langlands locale.
\end{theoreme}

On note $H_n$ le groupe des matrices de la forme $\sigma_n \begin{pmatrix}
1 & X \\
0 & 1
\end{pmatrix}\begin{pmatrix}
g & 0 \\
0 & g
\end{pmatrix} \sigma_n^{-1}$ avec $X \in M_n(F)$ et $g \in GL_n(F)$. L'élément $\sigma_n$ est la matrice associée à la permutation $\bigl(\begin{smallmatrix}
    1 & 2 & \cdots & n & n+1 & n+2 & \cdots & 2n \\
    1 & 3 & \cdots &  2n-1  & 2 & 4 & \cdots & 2n
  \end{smallmatrix}\bigr).$
Soit $\theta$ le caractère sur $H_n(F)$ qui envoie $\sigma_n \begin{pmatrix}
1 & X \\
0 & 1
\end{pmatrix}\begin{pmatrix}
g & 0 \\
0 & g
\end{pmatrix} \sigma_n^{-1}$ sur $\psi(Tr(X))$.
On déduit le théorème précédent d'un résultat analogue sur le modèle de Shalika. Plus précisément, on prouve le
\begin{theoreme}
\label{th1}
Il existe un isomorphisme $GL_{2n}$-équivariant de représentations unitaires
\begin{equation}
L^2(H_n \backslash GL_{2n}, \theta) \simeq \int^{\oplus}_{Temp(SO(2n+1))/\sim} T(\sigma) d\sigma,
\end{equation}
où $T : {Temp(SO(2n+1))\slash{\sim}} \rightarrow Temp(GL_{2n})$ est l'application de transfert provenant de la correspondance de Langlands locale.
\end{theoreme}

Ce dernier est une conséquence de la formule de Plancherel explicite que l'on prouve dans la section \ref{plancherel}. 
Soit $\psi$ un caractère additif non trivial de $F$. On pose $Y_n = H_n \backslash GL_{2n}$. On note $C^\infty_c(Y_n, \theta)$ l'ensemble des fonctions lisse sur $G_n$, $(H_n, \theta)$-équivariante à gauche et à support compact modulo $H_n$. Pour $f \in C^\infty_c(G_{2n})$, on note
\begin{equation}
\varphi_f(y) = \int_{H_n} f(hy) \theta(h)^{-1} dh,
\end{equation}
pour tout $y \in G_{2n}$. L'application $f \in C^\infty_c(G_{2n}) \mapsto \varphi_f \in C^\infty_c(Y_n, \theta)$ est surjective. Soient $\varphi_1, \varphi_2 \in C^\infty_c(Y_n, \theta)$. Il existe $f_1, f_2 \in C^\infty_c(G_{2n})$ telles que que $\varphi_i = \varphi_{f_i}$ pour $i = 1,2$. On pose $f = f_1 * f_2^{*}$, où $f_2^*(g) = \overline{f_2(g^{-1})}$. 
Pour $W \in \mathcal{W}(\pi, \psi)$, on note
\begin{equation}
\beta(W) = \int_{H^P_n \cap N_{2n} \backslash H^P_n} W(\xi_p) \theta(\xi_p)^{-1} d\xi_p.
\end{equation}
où $H^P_n = H_n \bigcap P_n$, on définit la mesure $d\xi_p$ dans la section \ref{formInv}, voir la section \ref{notations} pour les notations $N_{2n}, P_n$ et $\mathcal{W}(\pi, \psi)$. On pose
\begin{equation}
(\varphi_1, \varphi_2)_{Y_n, \pi} = \sum_{W \in \mathcal{B}(\pi, \psi)} \beta(W) \overline{\beta(\pi(\overline{f}_1)\pi(f_2^\vee)W)},
\end{equation}
pour tout $\pi \in T(Temp(SO(2n+1)))$, où $f_2^\vee(g) = f_2(g^{-1})$ et $\mathcal{B}(\pi, \psi)$ est une base orthonormée de $\mathcal{W}(\pi, \psi)$.

On prouve alors la formule de Plancherel explicite sur $H_n \backslash GL_{2n}$ sous la forme du
\begin{theoreme}
\label{introPlanchExp}
On a
\begin{equation}
(\varphi_1, \varphi_2)_{L^2(Y_n, \theta)} = \int_{Temp(SO(2n+1))\slash{\sim}} (\varphi_1, \varphi_2)_{Y_n, T(\sigma)} \frac{|\gamma^*(0, \sigma, Ad, \psi)|}{|S_\sigma|}d\sigma,
\end{equation}
Le facteur $\frac{|\gamma^*(0, \sigma, Ad, \psi)|}{|S_\sigma|}$ est défini dans la section \ref{notations}.
\end{theoreme}
La preuve de ce théorème est purement locale, elle se base sur la théorie des fonctions zêta introduite par Jacquet-Shalika \cite{jacquet-shalika}. Une fois que l'on aura introduit les préliminaires, il s'agit essentiellement de montrer que l'on peut échanger deux intégrales (voir la preuve du théorème \ref{thPlanch}). L'utilisation des résultats de Jacquet-Shalika nous amène à prouver un résultat sur les facteurs gamma.
\begin{theoreme}
Soit $\pi$ une représentation tempérée irréductible de $GL_{2n}(F)$. On note $\gamma^{JS}(s,\pi,\Lambda^2,\psi)$ le facteur gamma de Jacquet-Shalika, voir section \ref{gamma} et $\gamma(s, \pi, \Lambda^2, \psi)$ le facteur gamma d'Artin défini via la correspondance de Langlands.
Alors il existe une constante $c(\pi)$ de module 1 telle que pour tout $s \in \mathbb{C}$, on ait
\begin{equation}
\gamma^{JS}(s,\pi,\Lambda^2,\psi)=c(\pi)\gamma(s,\pi,\Lambda^2,\psi).
\end{equation}
\end{theoreme}

Dans la suite de cette introduction, $F$ désigne un corps de nombres et $\psi$ un caractère non trivial de $\mathbb{A}_F\slash{F}$.
On définit $H_n(\mathbb{A}_F)$ et $\theta$ de la même manière que précédemment de façon globale.

Soit $\pi$ une représentation automorphe cuspidale irréductible de $GL_{2n}(\mathbb{A}_F)$ et $\phi_1, \phi_2 \in C^\infty_c(H_n(\mathbb{A}_F) \backslash GL_{2n}(\mathbb{A}_F), \theta) = \bigotimes_v^{'} C^\infty_c(H_n(F_v) \backslash GL_{2n}(F_v), \theta_v)$. On note $\Sigma \phi_i \in C^\infty([GL_{2n}])$, pour $i=1,2$, la fonction définie par $\Sigma \phi_i(g) = \sum_{x \in H_n(F) \backslash GL_{2n}(F)} \phi_i(xg)$ pour tout $g \in GL_{2n}(\mathbb{A}_F)$. D'autre part, pour $\varphi \in \pi$, on introduit la période globale
\begin{equation}
\mathcal{P}_{H_n, \theta}(\varphi) = \int_{[Z_{2n} \backslash H_n]} \varphi(h) \theta(h) dh,
\end{equation}
où $Z_{2n}$ est le centre de $GL_{2n}$ et les crochets désignent le quotient des points adéliques modulo les points rationnels.

Sakellaridis et Venkatesh conjecturent une factorisation du produit scalaire
\begin{equation}
<(\Sigma \phi_1)_\pi, (\Sigma \phi_2)_\pi>_{Pet} = \int_{[Z_{2n} \backslash GL_{2n}]}(\Sigma \phi_1)_\pi(g)\overline{(\Sigma \phi_2)_\pi(g)} dg,
\end{equation}
 où $(\Sigma \phi_1)_\pi$ est la projection sur $\pi$ de $\Sigma \phi_i$ et $dg$ est la mesure de Tamagawa de $[Z_{2n} \backslash GL_{2n}]$ \cite[section 17.1]{sak-ven}.
 
 Si $\pi$ est le transfert d'une représentation automorphe cuspidale $\sigma$ de $SO(2n+1)(\mathbb{A}_F)$ alors cette factorisation prend la forme suivante
\begin{equation}
\label{introFact}
<(\Sigma \phi_1)_\pi, (\Sigma \phi_2)_\pi>_{Pet} = q \prod_v' <\phi_{1,v}, \phi_{2,v}>_{\sigma_v},
\end{equation}
où $q$ est un rationnel. Cette factorisation est une conséquence de la factorisation de la période globale en produit de périodes locales, produite par Jacquet-Shalika dans le cas qui nous intéresse (voir plus loin). Les quantités $<\phi_{1,v}, \phi_{2,v}>_{\sigma_v}$ sont des formes hermitiennes $(H_n(F_v), \theta_v)$-équivariante. On renvoie à \cite[section 17.5]{sak-ven} pour la signification du produit $\prod_v'$. En effet, le produit n'est pas absolument convergent et on doit l'interpréter comme l'évaluation d'une fonction $L$. Si $\pi$ n'est pas le transfert d'une représentation automorphe cuspidale de $SO(2n+1)(\mathbb{A}_F)$ alors
\begin{equation}
<(\Sigma \phi_1)_\pi, (\Sigma \phi_2)_\pi>_{Pet} = 0.
\end{equation}
Sakellaridis et Venkatesh conjecturent que les formes hermitiennes $<\phi_{1,v}, \phi_{2,v}>_{\sigma_v}$ (pour $\sigma_v$ tempérée) sont reliées aux formes $(\phi_{1,v}, \phi_{2,v})_{Y_{n,v}, \pi_v}$ qui l'on a définit précédemment. Plus précisement,
\begin{conjecture}[Sakellaridis-Venkatesh {\cite[section 17]{sak-ven}}]
\label{introConj}
On a l'égalité
\begin{equation}
<\phi_{1,v}, \phi_{2,v}>_{\sigma_v} = (\phi_{1,v}, \phi_{2,v})_{Y_{n,v}, T(\sigma_v)}.
\end{equation}
\end{conjecture}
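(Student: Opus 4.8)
The plan is to prove Conjecture \ref{introConj} by reducing it to a statement of proportionality and then pinning the proportionality constant to $1$. First I would observe that both $\langle \cdot, \cdot \rangle_{\sigma_v}$ and $(\cdot, \cdot)_{Y_{n,v}, T(\sigma_v)}$ are continuous, positive, $(H_n(F_v), \theta_v)$-equivariant hermitian forms on $C^\infty_c(Y_{n,v}, \theta_v)$ attached to the single tempered component $\sigma_v$, equivalently to its transfer $\pi_v = T(\sigma_v)$. The space of such forms factoring through $\pi_v$ is controlled by the local Shalika functional $\beta$, which is unique up to a scalar by local multiplicity one for the Shalika model (Jacquet--Rallis); hence such forms make up an at most one-dimensional space. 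Consequently there is a scalar $c_v = c_v(\sigma_v) > 0$ with $\langle \cdot, \cdot \rangle_{\sigma_v} = c_v \, (\cdot, \cdot)_{Y_{n,v}, T(\sigma_v)}$, and the entire problem reduces to showing $c_v = 1$.

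Next I would make both forms explicit through $\beta$. The right-hand side is, by definition, the relative character $\sum_{W \in \mathcal{B}(\pi_v,\psi)} \beta(W)\,\overline{\beta(\pi_v(\bar f_{1,v})\pi_v(f_{2,v}^\vee)W)}$ on the Whittaker model $\mathcal{W}(\pi_v,\psi)$. For the left-hand side I would unfold the Jacquet--Shalika local period \cite{jacquet-shalika} that defines $\langle \cdot, \cdot \rangle_{\sigma_v}$ into the same Whittaker model, expressing it again through $\beta$ but now carrying the local normalization by the exterior-square $L$-factor that makes the regularized product $\prod_v'$ of \ref{introFact} converge. Matching the two expressions then amounts to comparing these $L$- and $\gamma$-factor normalizations, and this is precisely where the identity $\gamma^{JS}(s,\pi_v,\Lambda^2,\psi) = c(\pi_v)\gamma(s,\pi_v,\Lambda^2,\psi)$ enters: it lets me trade the Jacquet--Shalika gamma factor appearing in the unfolded period for the Artin gamma factor sitting inside the Plancherel density $\frac{|\gamma^*(0,\sigma,Ad,\psi)|}{|S_\sigma|}$ of Theorem \ref{introPlanchExp}. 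This step shows in particular that $c_v$ is independent of the test functions, consistent with the proportionality above.

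To fix $c_v = 1$ I would argue at two levels. At almost all places the data $\sigma_v$, $\psi_v$ and the test functions are unramified, and there both forms can be evaluated on the normalized spherical vectors by the unramified Shalika computation; they return the same local value, so $c_v = 1$ for almost all $v$. For the finitely many remaining places I would run a global-to-local comparison: for a cuspidal $\pi$ that is the transfer of a cuspidal $\sigma$ of $SO(2n+1)$ and decomposable data $\phi_i = \otimes_v \phi_{i,v}$, I would compute $\langle (\Sigma\phi_1)_\pi, (\Sigma\phi_2)_\pi \rangle_{Pet}$ in two ways — once via the Jacquet--Shalika factorization \ref{introFact}, once via the global spectral identity obtained by assembling the local Plancherel formulas of Theorem \ref{introPlanchExp} (this is the uniqueness of the direct-integral decomposition of $L^2(Y_n)$ applied to the two candidate local spectral forms). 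The two global expressions differ only by $\prod_v c_v$ and a rational constant; since $c_v = 1$ at almost all $v$, varying the local test functions at the ramified places forces $c_v = 1$ everywhere.

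The main obstacle will be the exact determination of the constant at the ramified places together with the attendant measure bookkeeping: one must simultaneously track the normalizations of $d\xi_p$, of the local Haar measures, and of the Tamagawa measure of the global period, and the product $\prod_v'$ in \ref{introFact} is only conditionally convergent and must be interpreted through the exterior-square $L$-function, so the comparison has to be carried out after the \emph{same} $L$-factor regularization on both sides. A secondary point requiring care is to apply the local uniqueness statement only where it is valid, namely for tempered $\sigma_v$ — which is exactly the range occurring in the Plancherel decomposition — so that the reduction to a single scalar $c_v$ is legitimate.
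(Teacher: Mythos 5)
Your opening reduction (proportionality of the two forms via Jacquet--Rallis multiplicity one for the Shalika model) is legitimate in spirit -- it is the same uniqueness input the paper uses in Proposition \ref{constbeta} -- but the way you pin down the constant $c_v$ contains a genuine gap. The claimed second computation of $\langle(\Sigma\phi_1)_\pi,(\Sigma\phi_2)_\pi\rangle_{Pet}$, namely the "global spectral identity obtained by assembling the local Plancherel formulas of Theorem \ref{introPlanchExp}", does not exist: Theorem \ref{introPlanchExp} decomposes the \emph{local} spaces $L^2(H_n(F_v)\backslash GL_{2n}(F_v),\theta_v)$ and gives no access whatsoever to the Petersson pairing on the automorphic quotient. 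The only bridge between that pairing and local quantities is the Jacquet--Shalika unfolding of the global period $\mathcal{P}_{H_n,\theta}$, i.e.\ precisely the factorization \ref{introFact} that you already used for the first computation; your two computations therefore coincide and yield no equation on $\prod_v c_v$. Moreover, even granting a product formula $\prod_{v\in S}c_v=1$, your final step fails: "varying the local test functions at the ramified places" cannot separate the $c_v$, since by your own first step each $c_v$ is a constant independent of the test functions. To isolate a single $c_{v_0}$ one would need a globalization theorem in the style of Proposition \ref{globalisation} (a cuspidal $\sigma$ on $SO(2n+1)$, with cuspidal transfer, whose component at $v_0$ is the given tempered representation -- or approximates it, together with a continuity argument -- and which is unramified at every other finite place), plus control of the rational constant $q$ in \ref{introFact} and of the archimedean factors; none of this is supplied.

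There is also a structural gap upstream: your first step takes for granted that $(\cdot,\cdot)_{Y_{n,v},T(\sigma_v)}$ is a well-defined positive $GL_{2n}(F_v)$-invariant form factoring through $\pi_v$, i.e.\ that it is independent of the chosen lifts $f_{i,v}$ of $\phi_{i,v}$. This is exactly Proposition \ref{lemmebeta}, the $(H_n,\theta)$-equivariance of $\beta$ on $\mathcal{W}(\pi_v,\psi)$; from its defining integral $\beta$ is only visibly $(H_n\cap P_{2n},\theta)$-equivariant, and the paper's proof of full equivariance is the hard core of the whole argument (spectral limit, Corollary \ref{corolim}, together with the gamma-factor identity of Theorem \ref{egalitegamma}). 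The paper even remarks that a direct proof would require a multiplicity-one statement for $\Hom_{H_n\cap P_{2n}}(\pi,\theta)$ which is only conjectural. By contrast, the paper's actual proof of the present conjecture needs no global pinning of constants at all: Lemma \ref{zetabeta} identifies the local Jacquet--Shalika period at $s=1$ with $\pm\Phi_v(0)\beta(W_v)$ by unfolding the zeta integral over $P_n\backslash G_n$, so the forms $\langle\cdot,\cdot\rangle_{\sigma_v}$ -- which are by construction built from these local zeta values -- coincide with the $\beta$-forms directly, the residual sign being absorbed by the positivity argument in Theorem \ref{thPlanch}.
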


De manière duale, la relation \ref{introFact} est équivalente à une factorisation de la période globale $\mathcal{P}_{H_n, \theta}$ en produit de périodes locales $\mathcal{P}_{H_n, \theta, v}$. Cette factorisation est obtenue par Jacquet-Shalika \cite{jacquet-shalika} à travers leur théorie des fonctions zêta que l'on explicite dans la section \ref{gamma}.

La relation \ref{introLien} entre la période locale et la forme $\beta$ va nous permettre d'obtenir une formule de Plancherel explicite sur $L^2(H_n \backslash GL_{2n}, \theta)$ prouvant ainsi la conjecture \ref{introConj}. Plus précisément, pour $\Phi$ une fonction de Schwartz sur $\mathbb{A}_F^n$ et $W_\varphi$ la fonction de Whittaker associée à $\varphi$, on introduit dans la suite des fonctions zêta globales $J(s, W_\varphi, \Phi)$, qui sont reliées à la période globale par la relation
\begin{equation}
Res_{s=1} J(s, W_\varphi, \Phi) = c\mathcal{P}_{H_n, \theta}(\varphi) \widehat{\Phi}(0),
\end{equation}
où $c$ est une constante indépendante de $\varphi$ et $\Phi$.

De plus, ces fonctions zêta globales se décomposent en un produit de fonctions zêta locales, pour $Re(s)$ assez grand, on a
\begin{equation}
J(s, W_\varphi, \Phi) = L^S(s, \pi, \Lambda^2) \prod_{v \in S} J(s, W_v, \Phi_v),
\end{equation}
où $S$ est un ensemble de places suffisamment grand. On obtient alors une factorisation de la période globale sous la forme
\begin{equation}
\mathcal{P}_{H_n, \theta}(\varphi) = \frac{Res_{s=1} L^S(s, \pi, \Lambda^2)}{\widehat{\Phi}^S(0)} \prod_{v \in S} \frac{J(1, W_v, \Phi_v)}{\widehat{\Phi}_v(0)}.
\end{equation}

Supposons que $Res_{s=1} L^S(s, \pi, \Lambda^2) \neq 0$. Le terme $\mathcal{P}_{H_n, \theta}(\varphi)$ ne dépend pas de $\Phi$, on en déduit que les facteurs $\frac{J(1, W_v, \Phi_v)}{\widehat{\Phi}_v(0)}$ ne dépendent pas de $\Phi$. On montrera l'égalité
\begin{equation}
\label{introLien}
J(1, \rho(w_{n,n})\widetilde{W_v}, \widehat{\Phi_v}) = \pm \Phi_v(0) \beta(W_v),
\end{equation} c'est le lemme \ref{zetabeta}, où $w_{n,n} = \sigma_n \begin{pmatrix}
0 & 1 \\
1 & 0
\end{pmatrix}\sigma_n^{-1}$. La forme linéaire $\beta$ nous servira à prouver le théorème \ref{introPlanchExp}.

On commence dans la section \ref{gamma} par prouver une relation sur les facteurs $\gamma$ du carré extérieur. Les sections \ref{seclimite} et \ref{formInv} sont des préliminaires pour le théorème \ref{thPlanch}. On fini dans la section \ref{plancherel} par prouver une formule de Plancherel explicite sur $L^2(H_n \backslash GL_{2n}, \theta)$ et des décompositions de Plancherel abstraite sur $L^2(H_n \backslash GL_{2n}, \theta)$ et $L^2(GL_n \times GL_n \backslash GL_{2n})$.

\subsection{Notations}
\label{notations}
Dans la suite on notera $F$ un corps $p$-adique (sauf dans la section \ref{gamma} où $F$ peut désigner un corps archimédien) et $\psi$ un caractère non trivial de $F$. On note $q_F$ le cardinal du corps résiduel de $F$ et $|.|_F$ (ou simplement |.|) la valeur absolue sur $F$ normalisé par $|\omega|_F = q_F^{-1}$ où $\omega$ est une uniformisante de $F$. 

On notera $G_m$ le groupe $GL_m(F)$ et $PG_m = Z_m(F) \backslash GL_m(F)$. On note $H_n(F)$ le groupe des matrices de la forme $\sigma_n \begin{pmatrix}
1 & X \\
0 & 1
\end{pmatrix}\begin{pmatrix}
g & 0 \\
0 & g
\end{pmatrix} \sigma_n^{-1}$ avec $X \in M_n(F)$ et $g \in GL_n(F)$. L'élément $\sigma_n$ est la matrice associée à la permutation $\bigl(\begin{smallmatrix}
    1 & 2 & \cdots & n & n+1 & n+2 & \cdots & 2n \\
    1 & 3 & \cdots &  2n-1  & 2 & 4 & \cdots & 2n
  \end{smallmatrix}\bigr).$ On note $SO(2m+1)$ la forme déployée du groupe spécial orthogonal sur un espace de dimension $2m+1$. On note $A_n$ le sous-groupe de $G_n$ des matrices diagonales inversibles, $B_n$ le sous groupe des matrices triangulaires supérieures inversibles, $\overline{B}_n$ le sous groupe des matrices triangulaires inférieures inversibles, $N_n$ le sous-groupe de $B_n$ des matrices dont les éléments diagonaux sont $1$, $\overline{N}_n = {}^tN_n$ et $M_n$ l'ensemble des matrices de taille $n \times n$ à coefficients dans $F$. On note $V_n$ le sous-espace de $M_n$ des matrices triangulaires inférieures strictes. On note $U_n$ le groupe des matrices de la forme $\begin{pmatrix}
1_{n-1} & x \\
0 & 1
\end{pmatrix}$ pour $x \in F^{n-1}$ et $P_n = G_{n-1}U_n$ le sous-groupe mirabolique. On note $\delta_{B_n}$ le caractère modulaire de $B_n$. On notera par des lettres gothiques les algèbres de Lie correspondantes et pour $\mathfrak{g}$ une algèbre de Lie $\mathcal{U}(\mathfrak{g})$ désignera l'algèbre enveloppante.

Lorsque $X$ est un espace totalement discontinu, on notera $C^\infty_c(X)$ ou $\mathcal{S}(X)$, l'espace des fonctions localement constante à support compact. Lorsque $G$ est un groupe algébrique réel ou complexe, on note $\mathcal{S}(G)$ l'espace des fonctions $C^\infty$ à décroissance rapide ainsi que toutes ses dérivées tel que défini par Aizenbud et Gourevitch \cite{aizenbud}. De plus, lorsque $\mathbb{A}_K$ est l'anneau des adèles d'un corps de nombres $K$ et $G$ est un groupe algébrique sur $K$, on note $\mathcal{S}(G(\mathbb{A}))$ le produit restreint des espaces $\mathcal{S}(G(K_v))$ lorsque $v$ parcours l'ensemble des places de $K$ i.e. l'ensemble des combinaisons linéaires des fonctions $f = \otimes_v f_v$ avec $f_v \in \mathcal{S}(G(K_v))$ pour tout $v$ et $f_v = \mathbbm{1}_{G(\mathcal{O}_v)}$ sauf pour un nombre fini de $v$, où $\mathcal{O}_v$ est l'anneau des entiers de $K_v$.

Pour $G$ un groupe réductif connexe sur $F$ (dans la suite $G$ sera $GL_{2n}$, $PGL_{2n}$, $SO_{2n+1}$ ou un quotient, sous-groupe de Levi de ces groupes), on note $Temp(G)$ l'ensemble des classes d'isomorphisme de représentations irréductibles tempérées de $G(F)$ et $\Pi_2(G) \subset Temp(G)$ le sous-ensemble des représentations de carré intégrable. 

On peut voir le caractère $\psi$ comme un caractère sur $N_n$ par la formule $\psi(u) = \psi(\sum_{i=1}^{n-1} u_{i,i+1})$,
pour tous $u \in N_n$. On dit qu'une représentation $\pi$ de $G_n$ est générique, si elle est irréductible et $\Hom_{N_n}(\pi, \psi) \neq 0$. Dans ce cas, $\Hom_{N_n}(\pi, \psi)$ est de dimension $1$. Soit $\lambda \in \Hom_{N_n}(\pi, \psi)$ non nul. On note $\mathcal{W}(\pi, \psi)$ le modèle de Whittaker de $\pi$, c'est l'espace des fonctions $W : G_n \rightarrow \mathbb{C}$ telles que $W(g) = \lambda(\pi(g)v)$ pour $v \in V_\pi$, où $V_\pi$ est l'espace sous-jacent à $\pi$. Le groupe $G_n$ agit par translation à droite sur $\mathcal{W}(\pi, \psi)$ et on a $W(ug) = \psi(u)W(g)$ pour $u \in N_n$ et $g \in G_n$. Les représentations tempérées sont génériques.

On note $Z_G$ le centre de $G(F)$ et $A_G$ le tore déployé maximal dans $Z_G$. Soit $M$ un sous-groupe de Levi de $G$ et $\sigma \in \Pi_2(M)$. On note $W(G, M)$ le groupe de Weyl associé au couple $(G,M)$ et $W(G, \sigma)$ le sous-groupe de $W(G, M)$ fixant la classe d'isomorphisme de $\sigma$.
On note $W_F$ (resp. $W_F'$) le groupe de Weil (resp. Weil-Deligne) de $F$ et $\Phi(G) = \{\phi : W_F' \rightarrow {}^LG \text{ admissible dont l'image de $W_F$ est bornée}\}$ l'ensemble des paramètres de Langlands tempérés de $G$ et $Temp(G)/Stab$ le quotient de $Temp(G)$ par la relation d'équivalence $\pi \equiv \pi' \iff \varphi_\pi = \varphi_{\pi'}$, où $\varphi_\pi$ est le paramètre de Langlands associé à $\pi$.

Pour $P=MN$ un sous-groupe parabolique de $G$, on note $i_P^G(\sigma)$ l'induction parabolique normalisée lorsque $\sigma$ est une représentation lisse de $M$ : c'est la représentation régulière à droite de $G$ sur l'espace des fonctions localement constantes $f : G \mapsto \sigma$ qui vérifient $f(mng) = \delta_P(m)^{\frac{1}{2}}\sigma(m)f(g)$ pour tous $m \in M$, $n \in N$ et $g \in G$, où $\delta_P$ est le caractère modulaire de $P$. On note $\rho$ la représentation régulière à droite par $g$ i.e. $\rho(g)f(g_0) = f(g_0g)$ pour tous $g,g_0 \in G$. On notera $\rho(g)$ la translation à droite par $g$ dans d'autres contextes. Lorsque $G = G_n$, $M = G_{n_1} \times ... \times G_{n_k}$ et $P$ est le sous groupe parabolique standard de $G$ de Levi $M$, on note $\pi_1 \times ... \times \pi_k = i_P^G(\pi_1 \boxtimes ... \boxtimes \pi_k)$ pour $\pi_i$ des représentations lisses de $G_{n_i}$. Lorsque $G = SO(2n+1)$, $M=G_{n_1} \times ... \times G_{n_k} \times SO(2m+1)$ et $P$ un sous groupe parabolique de $G$ de Levi $M$, on note $\pi_1 \times ... \times \pi_k \rtimes \sigma_0 = i_P^G(\pi_1 \boxtimes ... \boxtimes \pi_k \boxtimes \sigma_0)$ pour $\pi_i$ des représentations lisses de $G_{n_i}$ et $\sigma_0$ une représentation lisse de $SO(2m+1)$.

 On peut définir une application $\Phi(SO(2m+1)) \rightarrow \Phi(G_{2m})$, rappelons qu'un élément de $\Phi(SO(2m+1))$ est un morphisme admissible $\phi : W_F' \rightarrow {}^L SO(2m+1)$. Or ${}^L SO(2m+1) = Sp_{2m}(\mathbb{C})$, l'application $\Phi(SO(2m+1)) \rightarrow \Phi(G_{2m})$ est définie par l'injection de $Sp_{2m}(\mathbb{C})$ dans $GL_{2m}(\mathbb{C})$. La correspondance de Langlands locale pour $SO(2m+1)$ et pour $GL_{2m}$, nous permettent d'en déduire une application de transfert $T : Temp(SO(2m+1))/Stab \rightarrow Temp(G_{2m})$. 

Dans les mesures de Plancherel, on verra apparaître des termes $|S_{\sigma}|$ pour $\sigma \in Temp(SO(2n+1))$ ou $Temp(PG_{2n})$. On n'explicite pas les ensembles $S_{\sigma}$ et on se contente de donner leur cardinal. Pour $\sigma \in Temp(SO(2n+1))$ sous-représentation de $\pi_1 \times ... \times \pi_l \rtimes \sigma_0$, avec $\pi_i \in \Pi_{2}(G_{n_i})$ et $\sigma_0 \in \Pi_2(SO(2m+1))$, le facteur $|S_{\sigma}|$ est le produit $|S_{\pi_1}|...|S_{\pi_l}||S_{\sigma_0}|$; où $|S_{\sigma_0}|=2^k$ tel que $T(\sigma_0) \simeq \tau_1 \times ... \times \tau_k$ avec $\tau_i \in \Pi_2(G_{m_i})$ et $|S_{\pi_i}| = n_i$.

Pour $\pi \in Temp(G)$ et $r$ une représentation admissible de ${}^L G$, on note $L(s, \pi, r)$ la fonction $L$ associée par la correspondance de Langlands locale et $\gamma(s, \pi, r, \psi)$ le facteur $\gamma$ correspondant. Lorsque $r$ est la représentation standard, on l'omettra. De plus, on note $\gamma^*(0, \pi, r, \psi)$ la régularisation du facteur $\gamma$ en $0$, défini par la relation
\begin{equation}
\gamma^*(0, \pi, r, \psi) = \lim_{s \rightarrow 0^+} \frac{\gamma(s, \pi, r, \psi)}{(s log(q_F))^{n_{\pi,r}}},
\end{equation}
où $n_{\pi,r}$ est l'ordre du zéro de $\gamma(s, \pi, r, \psi)$ en $s=0$.

\subsection{Mesures}
\label{mesures}
On équipe $F$ avec la mesure de Haar $dx$ qui est autoduale par rapport à $\psi$ et $F^\times$ de la mesure de Haar $d^\times x = \frac{dx}{|x|_F}$. 
Pour $m \geq 1$, on équipe $F^m$ de la mesure produit $(dx)^m$ et $(F^\times)^m$ de la mesure $(d^\times x)^m$. On équipe les groupes $M_n$, $U_n$, $N_n$, $\overline{N}_n$ des mesures de Haar "produit des coordonnées". Par exemple, on équipe $M_n$ de la mesure $dX = \prod_{i,j=1}^n dX_{i,j}$ où $dX_{i,j}$ est la mesure de Haar sur $F$ que l'on a fixé précédemment. On équipe $G_n$ de la mesure $dg = |\det g|_F^{-n} \prod_{i,j=1}^n dg_{i,j}$ et $P_n$ la mesure de Haar à droite obtenu comme produit des mesures sur $G_{n-1}$ et sur $U_n$. On équipe $\overline{B}_n$ de la mesure de Haar à droite $\prod_{i=1}^n |b_{i,i}|^{i-n-1} \prod_{\substack{i,j = 1,..,n \\ j \leq i}}db_{i,j}$. On équipe les groupes compact des mesures de Haar de masse totale égale à 1.

On équipe $N_n \backslash G_n$ et $N_n \backslash P_n \simeq N_{n-1} \backslash G_{n-1}$ des mesures quotient. On identifiera ces mesures à des mesures d'un sous-groupe de $G_n$. Plus exactement, $\overline{B}_n$ s'identifie à un ouvert dense de $N_n \backslash G_n$, on obtient alors l'égalité
\begin{equation}
\int_{N_n \backslash G_n} f(g) dg = \int_{\overline{B}_n} f(b)db,
\end{equation}
pour tout $f$ lisse sur $G_n$, $N_n$-équivariante et à support compact modulo $N_n$.

On a l'isomorphisme $P_n \backslash G_n \simeq F^n\backslash{\{0\}}$, on équipe $P_n \backslash G_n$ de la semi-mesure (ou mesure tordue) $dg$ telle que $|\det g|dg$ s'identifie à la mesure $(dx)^n$ sur $F^n$. La mesure tordue sur $P_n \backslash G_n$ n'est pas invariante, ce n'est pas une mesure sur le quotient $P_n \backslash G_n$. Cependant on a la formule d'intégration suivante
\begin{equation}
\int_{G_n} f(g) dg  = \int_{P_n \backslash G_n} \int_{P_n} f(pg) |\det p|^{-1} dp dg,
\end{equation}
 pour tout $f \in \mathcal{S}(G_n)$.
 
Pour $G$ un groupe réductif connexe sur $F$, on fixe un isomorphisme $A_G \simeq (F^\times)^{\dim(A_G)}$ et on équipe $A_G$ de la mesure $(d^\times x)^{dim(A_G)}$ provenant de l'isomorphisme avec $(F^\times)^{\dim(A_G)}$.

Décrivons le choix de la normalisation d'une mesure sur Temp(G). Soit $M$ un sous-groupe de Levi de $G$ et $\sigma \in \Pi_2(M)$. Soit $\widehat{A_M}$ le dual unitaire de $A_M$ et $d\widetilde{\chi}$ la mesure de Haar duale de celle de $A_M$. On équipe alors $\widehat{A_M}$ de la mesure $d\chi$ définie par
\begin{equation}
d\chi = \gamma^*(0, 1, \psi)^{-dim(A_M)}d\widetilde{\chi}.
\end{equation}

La mesure $d\chi$ est indépendante du caractère $\psi$. 

On note $X^*(M)$ le groupe des caractères algébriques de $M$, on dispose alors d'une application $\chi \otimes \lambda \in X^*(M) \otimes i\mathbb{R} \mapsto \sigma \otimes \chi_\lambda \in \Pi_2(M)$ où $\chi_\lambda(g) = |\chi(g)|^\lambda$. On définit alors une base de voisinage de $\sigma$ dans $\Pi_2(M)$ comme l'image d'une base de voisinage de $0$ dans $X^*(M) \otimes i\mathbb{R}$.

Il existe une unique mesure $d\sigma$ sur $\Pi_2(M)$ telle que l'isomorphisme local $\sigma \in \Pi_2(M) \mapsto \omega_{\sigma} \in \widehat{A_M}$ préserve localement les mesures, où $\omega_\sigma$ est le caractère central de $\sigma$ restreint à $A_M$. Soit $P$ un sous groupe parabolique de $G$ de Levi $M$. On définit alors la mesure $d\pi$ sur $Temp(G)$ localement autour de $\pi \simeq i_P^G(\sigma)$ par la formule
\begin{equation}
d\pi  = |W(G, M)|^{-1} (i_P^G)_* d\sigma,
\end{equation}
où $(i_P^G)_* d\sigma$ est la mesure $d\sigma$ poussée en avant en une mesure sur $Temp(G)$ par l'application $i_P^G$. Cette mesure ne dépend pas du choix du groupe parabolique. La mesure $d\pi$ est choisie pour vérifier la relation \ref{mesurePlanch}.

\subsection{Résultats}
\label{resultats}

Soit $F$ un corps $p$-adique et $\psi$ un caractère non trivial de $F$. Rappelons que l'on note $H_n(F)$ le groupe des matrices de la forme $\sigma_n \begin{pmatrix}
1 & X \\
0 & 1
\end{pmatrix}\begin{pmatrix}
g & 0 \\
0 & g
\end{pmatrix} \sigma_n^{-1}$ avec $X \in M_n(F)$ et $g \in GL_n(F)$. L'élément $\sigma_n$ est la matrice associée à la permutation $\bigl(\begin{smallmatrix}
    1 & 2 & \cdots & n & n+1 & n+2 & \cdots & 2n \\
    1 & 3 & \cdots &  2n-1  & 2 & 4 & \cdots & 2n
  \end{smallmatrix}\bigr).$
De plus, $\theta$ est le caractère sur $H_n(F)$ qui envoie $\sigma_n \begin{pmatrix}
1 & X \\
0 & 1
\end{pmatrix}\begin{pmatrix}
g & 0 \\
0 & g
\end{pmatrix} \sigma_n^{-1}$ sur $\psi(Tr(X))$.
Le résultat principal est le
\begin{theoreme}
On a un isomorphisme de représentations unitaires
\begin{equation}
L^2(H_n(F) \backslash GL_{2n}(F), \theta) \simeq \int^{\oplus}_{Temp(SO(2n+1)(F))/Stab} T(\sigma) d\sigma,
\end{equation}
où $T : {Temp(SO(2n+1))\slash{\sim}} \rightarrow Temp(GL_{2n})$ est l'application de transfert provenant de la correspondance de Langlands locale.
\end{theoreme}

De l'isomorphisme $L^2(GL_n(F) \times GL_n(F) \backslash GL_{2n}(F)) \simeq L^2(H_n(F) \backslash G_{2n}(F), \theta)$ $GL_{2n}$-équivariant (lemme \ref{lemmeiso}), on en déduit le
\begin{theoreme}
On a un isomorphisme de représentations unitaires
\begin{equation}
L^2(GL_n(F) \times GL_n(F) \backslash GL_{2n}(F)) \simeq \int^{\oplus}_{Temp(SO(2n+1)(F))/Stab} T(\sigma) d\sigma.
\end{equation}
\end{theoreme}

Rappelons que ces deux décompositions de Plancherel abstraites sont obtenues en prouvant une formule de Plancherel explicite sur $H_n \backslash G_n$.
\begin{theoreme}
On pose $Y_n = H_n \backslash G_{2n}$. Soient $\varphi_1, \varphi_2 \in C^\infty_c(Y_n, \theta)$. Il existe $f_1, f_2 \in C^\infty_c(G_{2n})$ telles que que $\varphi_i = \varphi_{f_i}$ pour $i = 1,2$. On pose
\begin{equation}
(\varphi_1, \varphi_2)_{Y_n, \pi} = \sum_{W \in \mathcal{B}(\pi, \psi)} \beta(W) \overline{\beta(\pi(\overline{f}_1)\pi(f_2^\vee)W)},
\end{equation}
pour tout $\pi \in T(Temp(SO(2n+1)))$. Les notations $\beta$ et $\mathcal{B}(\pi, \psi)$ ont été introduite dans l'introduction. On a alors
\begin{equation}
(\varphi_1, \varphi_2)_{L^2(Y_n, \theta)} = \int_{Temp(SO(2n+1))\slash{\sim}} (\varphi_1, \varphi_2)_{Y_n, T(\sigma)} \frac{|\gamma^*(0, \sigma, Ad, \psi)|}{|S_\sigma|}d\sigma,
\end{equation}
Le facteur $\frac{|\gamma^*(0, \sigma, Ad, \psi)|}{|S_\sigma|}$ est défini dans la section \ref{notations}.
\end{theoreme}

La mesure $\frac{|\gamma^*(0, \sigma, Ad, \psi)|}{|S_\sigma|}d\sigma$ n'est rien d'autre que la mesure de Plancherel pour SO(2n+1). En effet, la mesure de Plancherel d'un groupe réductif $p$-adique $G$ a été calculée par Waldspurger et Harish-Chandra \cite{waldspurger} sous la forme
\begin{equation}
d\mu_G(\sigma) = d(\sigma)j(\sigma)^{-1}d\sigma,
\end{equation}
où $d(\sigma)$ est le degré formel de $\sigma$ et $j(\sigma)$ est un scalaire produit d'opérateurs d'entrelacements (voir \cite{waldspurger}). Le degré formel pour $SO(2n+1)$ a été calculé par Ichino-Lapid-Mao \cite{ichino} et le facteur $j$ pour $SO(2n+1)$ découle de la normalisation des opérateurs d'entrelacements d'Arthur \cite{arthur}.
Finalement, on obtient que la mesure de Plancherel pour $SO(2n+1)$ est
\begin{equation}
d\mu_{SO(2n+1)}(\sigma) = \frac{|\gamma^*(0, \sigma, Ad, \psi)|}{|S_\sigma|}d\sigma.
\end{equation}
On renvoie à l'article de Beuzart-Plessis \cite[Proposition 2.13.2]{beuzart-plessis} pour l'analogue de ce résultat pour les groupes unitaires.

Pour finir, au cours de la preuve de la formule de Plancherel explicite, on aura besoin d'une égalité sur des facteurs gamma définie de deux manière différentes. On prouve le
\begin{theoreme}
\label{egalitegamma}
Soit $\pi$ une représentation tempérée irréductible de $GL_{2n}(F)$. On note $\gamma^{JS}(s,\pi,\Lambda^2,\psi)$ le facteur gamma de Jacquet-Shalika, voir section \ref{gamma} et $\gamma(s, \pi, \Lambda^2, \psi)$ le facteur gamma d'Artin défini via la correspondance de Langlands.
Alors il existe une constante $c(\pi)$ de module 1 telle que pour tout $s \in \mathbb{C}$, on ait
\begin{equation}
\gamma^{JS}(s,\pi,\Lambda^2,\psi)=c(\pi)\gamma(s,\pi,\Lambda^2,\psi).
\end{equation}
\end{theoreme}

\subsection{Remerciments}

Je tiens à remercier Raphaël Beuzart-Plessis qui m'a proposé ce sujet et a su me faire confiance. Il a fait preuve d'une énorme patience lors des phases de recherche et de rédaction. Je tiens à souligner qu'il a pris le temps de me relire un nombre incalculable de fois. Ses remarques et corrections sont la raison pour laquelle cet article n'est pas totalement illisible.

\section{Facteurs $\gamma$ du carré extérieur}
\label{gamma}
Dans cette partie $F$ désigne un corps local de caractéristique $0$ et $\psi$ un caractère non trivial de $F$. Soit $\pi$ une représentation tempérée irréductible de $GL_{2n}(F)$. Jacquet et Shalika ont défini une fonction L du carré extérieur $L_{JS}(s, \pi, \Lambda^2)$ par des intégrales notées $J(s, W, \phi)$, où $W \in \mathcal{W}(\pi, \psi)$ est un élément du modèle de Whittaker de $\pi$ et $\phi \in \mathcal{S}(F^n)$. Matringe a prouvé que, lorsque $F$ est non archimédien, ces intégrales $J(s,W,\phi)$ vérifient une équation fonctionnelle, ce qui permet de définir des facteurs $\gamma$, que l'on note $\gamma^{JS}(s,\pi,\Lambda^2,\psi)$. 

On montre que l'on a encore une équation fonctionnelle lorsque $F$ est archimédien et que les facteurs $\gamma$ sont égaux à une constante de module 1 prés à ceux définis par Shahidi, que l'on note $\gamma^{Sh}(s,\pi,\Lambda^2,\psi)$. Plus exactement, il existe des constantes $c^{Sh}(\pi)$ et $c(\pi)$ de module 1, telles que
\begin{equation}
\gamma^{JS}(s,\pi,\Lambda^2,\psi)=c^{Sh}(\pi)\gamma^{Sh}(s,\pi,\Lambda^2,\psi) = c(\pi)\gamma(s, \pi, \Lambda^2, \psi),
\end{equation}
pour tout $s \in \mathbb{C}$. La dernière égalité est une conséquence de l'égalité des facteurs gamma de Shahidi et d'Artin pour le carré extérieur à une racine de l'unité prés prouvée par Henniart \cite{henniart}. La preuve se fait par une méthode de globalisation, on considère $\pi$ comme une composante locale d'une représentation automorphe cuspidale.

\subsection{Préliminaires}

\subsubsection{Théorie locale}
Les intégrales $J(s, W, \phi)$ sont définies par
 \begin{equation}
\int_{N_n\backslash{G_n}} \int_{V_n} W\left(\sigma_n \begin{pmatrix}
1 & X \\
0 & 1
\end{pmatrix}\begin{pmatrix}
g & 0 \\
0 & g
\end{pmatrix} \sigma_n^{-1} \right)dX\phi(e_ng)|\det g|^s dg
 \end{equation}
pour tous $W \in \mathcal{W}(\pi, \psi)$, $\phi \in \mathcal{S}(F^n)$ et $s \in \mathbb{C}$. L'espace $V_n$ est l'ensemble des matrices triangulaires inférieures strictes, on l'équipe de la mesure de Haar $dX = \prod_{1 \leq j < i \leq n} dX_{i,j}$. L'élément $\sigma_n$ est la matrice associée à la permutation $\bigl(\begin{smallmatrix}
    1 & 2 & \cdots & n & n+1 & n+2 & \cdots & 2n \\
    1 & 3 & \cdots &  2n-1  & 2 & 4 & \cdots & 2n
  \end{smallmatrix}\bigr).$
  
  Jacquet et Shalika ont démontré que ces intégrales convergent pour $Re(s)$ suffisamment grand, plus exactement, on dispose de la
  \begin{proposition}[Jacquet-Shalika \cite{jacquet-shalika}]
  Il existe $\eta > 0$ tel que les intégrales $J(s, W, \phi)$ convergent absolument pour $Re(s) > 1 - \eta$.
  \end{proposition}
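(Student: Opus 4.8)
The plan is to dominate the integrand in absolute value by a product of nonnegative functions and, via the Iwasawa decomposition, to reduce the question to the convergence of an explicit integral over the diagonal torus $A_n$; the slack that pushes convergence to the left of $\mathrm{Re}(s)=1$ will come entirely from the temperedness of $\pi$. For $a=\mathrm{diag}(a_1,\dots,a_n)\in A_n$ write $\tilde a=\mathrm{diag}(a_1,a_1,a_2,a_2,\dots,a_n,a_n)$, and set
\[
I_W(g)=\int_{V_n} W\!\left(\sigma_n \begin{pmatrix} 1 & X \\ 0 & 1\end{pmatrix}\begin{pmatrix} g & 0 \\ 0 & g\end{pmatrix}\sigma_n^{-1}\right)dX
\]
for the inner integral, so that $J(s,W,\phi)=\int_{N_n\backslash G_n} I_W(g)\,\phi(e_ng)\,|\det g|^s\,dg$.

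First I would establish the absolute convergence of $I_W(g)$. The key elementary observation is that conjugation by $\sigma_n$ carries the entry of $\begin{pmatrix}1 & X\\ 0 & 1\end{pmatrix}$ sitting in position $(i,n+j)$ to position $(2i-1,2j)$; hence for $X\in V_n$ (strictly lower triangular, so $i>j$) one has $2i-1>2j$ and the unipotent element $\sigma_n\begin{pmatrix}1 & X\\ 0 & 1\end{pmatrix}\sigma_n^{-1}$ lies in the strictly lower unipotent subgroup $\overline N_{2n}$, which meets $N_{2n}$ trivially. A Whittaker function is $(N_{2n},\psi)$-equivariant on the left but, by the Bruhat decomposition and the rapid decay of $W$ on the torus in the directions where the simple-root quotients tend to $0$, it is rapidly decreasing along the transverse subgroup $\overline N_{2n}$; this forces $I_W(g)$ to converge absolutely. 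A routine change of variables in $X$ combined with the left equivariance of $W$ then shows that $|I_W|$ is left $N_n$-invariant, so that the outer integral over $N_n\backslash G_n$ is well posed.

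Next I would reduce the outer integral to the torus. Writing $g=nak$ with $n\in N_n$, $a\in A_n$ and $k\in K=GL_n(\mathcal O)$, one has $e_ng=a_n\cdot(\text{last row of }k)$, so $\phi(e_ng)$ is dominated, uniformly in $k$, by a fixed majorant $\Psi\in\mathcal S(F)$ evaluated at the single variable $a_n$, while the compact integral over $K$ only contributes a constant. Absolute convergence of $J(s,W,\phi)$ thus reduces to that of $\int_{A_n}|I_W(a)|\,\Psi(a_n)\,|\det a|^{\mathrm{Re}(s)}\,\delta_{B_n}(a)^{-1}\,da$. Conjugating $\mathrm{diag}(a,a)$ through $\sigma_n$ produces the interleaved torus $\tilde a$, and after rescaling the variable $X$ one is left with controlling $W(\tilde a\,\bar v)$ integrated over $\bar v$ in a piece of $\overline N_{2n}$, which by the previous paragraph is governed by the size of the Whittaker function at $\tilde a$.

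The heart of the matter, and the step I expect to be the main obstacle, is the quantitative majorization of the Whittaker function. I would invoke the standard gauge estimate for a generic representation: on the torus $A_{2n}$ one has $|W(t)|\ll \delta_{B_{2n}}^{1/2}(t)\,\xi(t)$, where $\xi$ is a finite sum of slowly increasing functions supported where the simple-root quotients $t_i/t_{i+1}$ are bounded above and decaying rapidly as they tend to $0$. The decisive input is that temperedness of $\pi$ forbids any exponent strictly larger than the one already carried by $\delta_{B_{2n}}^{1/2}$, which is precisely what leaves an $\eta>0$ of room instead of mere convergence for $\mathrm{Re}(s)>1$. Feeding this bound into the reduced torus integral, together with the rapid decay along $\overline N_{2n}$ established above and the Schwartz control in $a_n$, expresses the majorant as a finite product of one-dimensional integrals (resp.\ $q_F$-power series in the non-archimedean case) in the variables $a_i/a_{i+1}$ and $a_n$; each converges on a half-plane, and their common region of convergence is exactly $\mathrm{Re}(s)>1-\eta$ for an explicit $\eta>0$ read off from the gauge. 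The archimedean case proceeds along the same lines once the gauge estimate and the transverse decay are taken in their moderate-growth form, so that the rapid decrease of $\phi\in\mathcal S(F^n)$ and of the derivatives of $W$ ensure the same conclusion.
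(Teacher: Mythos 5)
Your skeleton — Iwasawa decomposition, absorbing the compact integral, bounding $\phi(e_nak)$ by a Schwartz majorant in $a_n$ alone, conjugating $\mathrm{diag}(a,a)$ through $\sigma_n$ into the interleaved torus, then feeding a gauge estimate into a torus integral — is exactly the skeleton of the paper's proof of the stronger Lemme \ref{convtemp} (the proposition itself is only quoted from Jacquet--Shalika, but \ref{convtemp} is the internal analogue, proved in full). However, there is a genuine gap at the step you flag and then dismiss too quickly. The ``rapid decay of $W$ along $\overline N_{2n}$'' is not a formal consequence of the Bruhat decomposition plus the gauge estimate, and your stated mechanism is backwards: the gauge $|W(bk)|\le C\prod_i(1+|b_i/b_{i+1}|)^{-N}\delta_{B_{2n}}(b)^{1/2}\log(\|b\|)^d$ gives rapid decay when the simple-root quotients tend to \emph{infinity}; in the directions where they tend to $0$ one only has the moderate bound $\delta^{1/2}\log^d$, which by itself is not integrable over the relevant piece of $\overline N_{2n}$.

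What actually makes the proof work is precise information on the Iwasawa torus part $t_X$ of $u_X=\sigma_n\begin{pmatrix}1&X\\0&1\end{pmatrix}\sigma_n^{-1}$, namely the two Jacquet--Shalika estimates quoted in the paper as Propositions \ref{maj_tj} and \ref{maj_mX}: $|t_j|\ge 1$ for $j$ odd, $|t_j|\le 1$ for $j$ even, and $\prod_{j\ \mathrm{odd}}|t_j|\ge m(X)^\alpha$. These are special to the configuration of entries in positions $(2i-1,2j)$ created by $\sigma_n$ and are proved by explicit matrix estimates, not by general structure theory. They are also what legitimizes your final ``product of one-dimensional integrals'': the gauge must be applied at the point $b\,t_X$, whose simple-root quotients are the \emph{mixed} quantities $t_jb_j/(t_{j+1}b_{j+1})$, so the decay in $X$ and the decay in $a$ are coupled rather than separable. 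One needs $b_j/b_{j+1}=1$ at odd $j$ together with $|t_j/t_{j+1}|\le 1$ at even $j$ to split $\bigl(1+|t_jb_j/(t_{j+1}b_{j+1})|\bigr)^{-2N}$ into $|t_j/t_{j+1}|^{-N}(1+|a_{j/2}/a_{j/2+1}|)^{-N}$, and the resulting ``bad'' factors must then be reabsorbed using $\prod_j|t_j/t_{j+1}|^{-N}=|t_1/t_{2n}|^{-N}\le 1$ and the lower bound $\prod_{j\ \mathrm{odd}}|t_j|\ge m(X)^\alpha$. Without this interleaving argument, neither the convergence of the inner $V_n$-integral uniformly in $a$ nor your factorized majorant is established. (A secondary slip: with the tempered gauge this method yields convergence for $\mathrm{Re}(s)>0$, as in Lemme \ref{convtemp}; the region $\mathrm{Re}(s)>1-\eta$ is what Jacquet--Shalika obtain in the merely generic unitary setting, so your explanation of where the $\eta$ comes from conflates the two situations.)
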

  
  Kewat \cite{kewat} montre, lorsque $F$ est p-adique, que ce sont des fractions rationnelles en $q^{s}$ où $q$ est le cardinal du corps résiduel de $F$. On aura aussi besoin d'avoir le prolongement méromorphe de ces intégrales lorsque $F$ est archimédien et d'un résultat de non annulation.
  \begin{proposition}[Belt \cite{belt}, Matringe \cite{matringe}]
  \label{nonzero}
  Fixons $s_0 \in \mathbb{C}$. Il existe $W \in \mathcal{W}(\pi, \psi)$ et $\phi \in \mathcal{S}(F^n)$ tels que $J(s,W,\phi)$ admet un prolongement méromorphe à tout le plan complexe et ne s'annule pas en $s_0$. Si $F=\mathbb{R}$ ou $\mathbb{C}$, le point $s_0$ peut éventuellement être un pôle. Si $F$ est $p$-adique, on peut choisir $W$ et $\phi$ tels que $J(s, W, \phi)$ soit entière.
  \end{proposition}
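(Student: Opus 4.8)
Le plan est de traiter s\'epar\'ement le prolongement m\'eromorphe et la non-annulation, en donnant une construction explicite de vecteurs tests dans le cas $p$-adique. Rappelons d'abord que pour $\Re(s)$ assez grand l'int\'egrale converge absolument (proposition de Jacquet-Shalika ci-dessus). Lorsque $F$ est $p$-adique, le r\'esultat de rationalit\'e de Kewat entra\^ine que $J(s,W,\phi)$ est une fraction rationnelle en $q_F^{-s}$, d'o\`u aussit\^ot le prolongement m\'eromorphe \`a $\mathbb{C}$ ; mieux, les int\'egrales $J(s,W,\phi)$, lorsque $W$ et $\phi$ varient, engendrent un id\'eal fractionnaire de $\mathbb{C}[q_F^{s},q_F^{-s}]$ dont un g\'en\'erateur est la fonction $L_{JS}(s,\pi,\Lambda^2)$. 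Lorsque $F$ est archim\'edien, j'invoquerais le prolongement de Belt : on d\'eveloppe asymptotiquement la fonction de Whittaker $W$ au voisinage du tore et, gr\^ace \`a la d\'ecroissance rapide de $\phi \in \mathcal{S}(F^n)$, on ram\`ene $J(s,W,\phi)$ \`a une somme finie d'int\'egrales de type Mellin qui se prolongent m\'eromorphiquement ; le principe de prolongement m\'eromorphe de Bernstein pour les syst\`emes holonomes fournit une alternative.

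Pour la non-annulation en $s_0$ dans le cas $p$-adique, je poserais $\Phi_W(g) = \int_{V_n} W\!\left(\sigma_n \left(\begin{smallmatrix} 1 & X \\ 0 & 1 \end{smallmatrix}\right)\left(\begin{smallmatrix} g & 0 \\ 0 & g \end{smallmatrix}\right)\sigma_n^{-1}\right)dX$, de sorte que $J(s,W,\phi) = \int_{N_n \backslash G_n} \Phi_W(g)\,\phi(e_n g)\,|\det g|^s\,dg$. L'id\'ee est de concentrer l'int\'egrande sur un unique ouvert de niveau $\{g \in N_n \backslash G_n : |\det g| = q_F^{-m}\}$, qui est bien ouvert puisque c'est l'image r\'eciproque de l'ouvert $\omega^m \mathcal{O}_F^\times$ de $F^\times$ par l'application $\det$, bien d\'efinie sur $N_n \backslash G_n$. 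Le facteur $\phi(e_n g)$ contr\^ole la coordonn\'ee $P_n \backslash G_n \simeq F^n \setminus \{0\}$, tandis que la non-d\'eg\'en\'erescence de la fonctionnelle de Shalika (l'int\'egrale int\'erieure sur $V_n$), jointe \`a l'isomorphisme $N_n \backslash P_n \simeq N_{n-1}\backslash G_{n-1}$, permet de prescrire $\Phi_W$ pour localiser la coordonn\'ee restante. On obtient ainsi un choix de $W$ et $\phi$ pour lequel l'int\'egrande est support\'ee dans un seul ouvert de niveau, de sorte que $|\det g|^s$ y est constant et
\begin{equation}
J(s,W,\phi) = c\, q_F^{-ms}, \qquad c \neq 0,
\end{equation}
c'est-\`a-dire une fonction enti\`ere ne s'annulant en aucun point, en particulier en $s_0$.

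Dans le cas archim\'edien, la m\^eme strat\'egie de localisation montre que la famille $\{J(s,W,\phi)\}$ n'est pas identiquement nulle et que l'on peut rendre non nul le coefficient dominant du d\'eveloppement de Laurent en $s_0$. La diff\'erence est qu'une fonction de Schwartz produit des facteurs de type $\Gamma$, de sorte que l'on ne peut pas en g\'en\'eral r\'ealiser un mon\^ome pur : le point $s_0$ peut rester un p\^ole. L'\'enonc\'e affirme donc seulement que $J(s,W,\phi)$, prolong\'ee m\'eromorphiquement, ne s'annule pas en $s_0$ (valeur finie non nulle, ou p\^ole).

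La principale difficult\'e se situe dans le cas archim\'edien : \'etablir le prolongement m\'eromorphe de Belt et contr\^oler le comportement de $J(s,W,\phi)$ en un point $s_0$ prescrit (\'eventuellement un p\^ole) demande l'analyse fine des fonctions de Whittaker archim\'ediennes et de l'espace de Schwartz $\mathcal{S}(G_n)$ au sens d'Aizenbud-Gourevitch. Dans le cas $p$-adique, le seul ingr\'edient non formel est la rationalit\'e de Kewat ; il reste toutefois \`a justifier soigneusement la flexibilit\'e des vecteurs tests, ce qui repose sur la non-d\'eg\'en\'erescence de la fonctionnelle de Shalika locale et m\'eriterait d'\^etre isol\'e en un lemme pr\'eliminaire.
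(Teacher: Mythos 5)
Le texte lui-m\^eme ne d\'emontre pas cette proposition : elle est import\'ee telle quelle de Belt (cas archim\'edien) et de Matringe (cas $p$-adique) et sert de bo\^ite noire dans la m\'ethode de globalisation. Votre tentative de red\'emonstration suit bien la strat\'egie standard (rationalit\'e de Kewat, puis localisation de l'int\'egrande pour produire un mon\^ome), mais elle comporte une lacune r\'eelle exactement au point qui porte tout le contenu math\'ematique : le \og lemme pr\'eliminaire \fg{} de flexibilit\'e des vecteurs tests que vous reportez \`a plus tard \emph{est} le th\'eor\`eme \`a d\'emontrer, et l'ingr\'edient que vous invoquez pour l'obtenir --- la non-d\'eg\'en\'erescence de la fonctionnelle de Shalika --- n'est pas disponible. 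Pour $\pi$ temp\'er\'ee quelconque, $\Hom_{H_n}(\pi,\theta)$ est \emph{nul} sauf si $\pi$ est dans l'image du transfert (c'est la caract\'erisation \ref{caracTransf} du texte), alors que la proposition \ref{nonzero} est appliqu\'ee dans la preuve de la proposition \ref{proparch} \`a des composantes locales arbitraires de repr\'esentations cuspidales, qui n'admettent en g\'en\'eral aucun mod\`ele de Shalika. Un argument fond\'e sur cette non-d\'eg\'en\'erescence s'effondrerait donc pr\'ecis\'ement dans les cas o\`u la proposition est utilis\'ee.

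Ce qui sauve la strat\'egie de localisation est une propri\'et\'e plus fine, que votre r\'edaction ne d\'egage pas : pour $X \in V_n$ et $g \in P_n$, l'\'el\'ement $\sigma_n \begin{pmatrix} 1 & X \\ 0 & 1 \end{pmatrix}\begin{pmatrix} g & 0 \\ 0 & g \end{pmatrix}\sigma_n^{-1}$ fixe le vecteur ligne $e_{2n}$, donc appartient au mirabolique $P_{2n}$. Apr\`es la d\'ecomposition $\int_{N_n\backslash G_n} = \int_{P_n\backslash G_n}\int_{N_n\backslash P_n}$ et le choix d'un $\phi$ concentr\'e pr\`es de $e_n$ (qui confine la variable de $P_n\backslash G_n \simeq F^n\setminus\{0\}$ dans un petit voisinage de $1$ o\`u $W$ est invariante \`a droite), l'int\'egrande ne d\'epend plus que de la restriction $W|_{P_{2n}}$. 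Or cette restriction peut \^etre prescrite arbitrairement dans $C^\infty_c(N_{2n}\backslash P_{2n},\psi)$ par la th\'eorie du mod\`ele de Kirillov (Gelfand--Kazhdan, Bernstein--Zelevinsky), valable pour \emph{toute} repr\'esentation g\'en\'erique irr\'eductible : c'est ce lemme-l\`a, et non l'existence d'une fonctionnelle de Shalika, qu'il faut citer. En prenant $W|_{P_{2n}}$ positive et concentr\'ee pr\`es de l'identit\'e modulo $N_{2n}$, on obtient $J(s,W,\phi) = c \neq 0$ constante, donc enti\`ere et sans z\'ero --- seule l'\'equivariance sous $H_n \cap P_{2n}$ intervient, jamais celle sous $H_n$. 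Enfin, votre cas archim\'edien (\og la m\^eme strat\'egie de localisation \fg{}) reste enti\`erement \`a l'\'etat de v\oe u : les fonctions caract\'eristiques n'y existent pas et la non-annulation demande un argument de type Dixmier--Malliavin ou d'approximation de l'unit\'e ; le texte, lui, se contente de renvoyer \`a Belt pour ce point.
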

  
  Lorsque la représentation est non-ramifiée, on peut représenter la fonction L du carré extérieur obtenue par la correspondance de Langlands locale, que l'on note $L(s, \pi, \Lambda^2)$, (qui est égale à celle obtenue par la méthode de Langlands-Shahidi) par ces intégrales.
  \begin{proposition}[Jacquet-Shalika \cite{jacquet-shalika}]
  \label{calculnr}
  Supposons que $F$ est $p$-adique, le conducteur de $\psi$ est l'anneau des entiers $\mathcal{O}_F$ de $F$. Soit $\pi$ une représentation générique non ramifiée de $GL_{2n}(F)$. On note $\phi_0$ la fonction caractéristique de $\mathcal{O}_F^n$ et $W_0 \in \mathcal{W}(\pi, \psi)$ l'unique fonction de Whittaker invariante par $GL_{2n}(\mathcal{O}_F)$ et qui vérifie $W(1)=1$. Alors
   \begin{equation}
   J(s,W_0,\phi_0) = L(s, \pi, \Lambda^2).
    \end{equation}
  \end{proposition}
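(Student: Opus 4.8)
The plan is to reduce the outer integral to the diagonal torus by Iwasawa decomposition, evaluate the inner $V_n$-integral by the Casselman--Shalika formula, and recognize the resulting series as a symmetric-function identity of Littlewood. Set $m(g) = \sigma_n\,\mathrm{diag}(g,g)\,\sigma_n^{-1}$ and $\nu(X) = \sigma_n\left(\begin{smallmatrix}1 & X\\ 0 & 1\end{smallmatrix}\right)\sigma_n^{-1}$, so the integrand of $J$ is $W_0(\nu(X)m(g))\,\phi_0(e_n g)\,|\det g|^s$. First I would apply the Iwasawa decomposition $G_n = N_n A_n K_n$, with $K_n = GL_n(\mathcal{O}_F)$, and write $g = ak$. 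Three simplifications happen at once: since $m(k)\in K_{2n} = GL_{2n}(\mathcal{O}_F)$ and $W_0$ is right $K_{2n}$-invariant, the inner integrand depends only on $a$; since $e_n a k = a_n\,(e_n k)$ with $e_n k$ a primitive row vector, $\phi_0(e_n g) = \mathbbm{1}[v(a_n)\ge 0]$; and $|\det g|^s = |\det a|^s$ while $\mathrm{vol}(K_n) = 1$. With the quotient measure (modular factor $\delta_{B_n}(a)^{-1}$) one obtains
\[
J(s,W_0,\phi_0) = \int_{a\in A_n,\ v(a_n)\ge 0} I(a)\,|\det a|^s\,\delta_{B_n}(a)^{-1}\,d^\times a,\qquad I(a) = \int_{V_n} W_0(\nu(X)\,m(a))\,dX,
\]
where $m(a) = \mathrm{diag}(a_1,a_1,\dots,a_n,a_n)$ and $\nu(X)$ is strictly lower unipotent with $X_{ij}$ in position $(2i-1,2j)$, $i>j$.

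Next I would evaluate $I(a)$. Conjugating $\nu(X)$ across $m(a)$ rescales its entries and produces a Jacobian; Iwasawa-decomposing $\nu(X)m(a) = n'\,t\,k'$ gives $W_0(\nu(X)m(a)) = \psi_{2n}(n')\,W_0(t)$, where $t$ is a diagonal element of $GL_{2n}$ whose valuations are piecewise-linear in the $v(X_{ij})$ and the $v(a_i)$. Cutting $V_n$ into the regions defined by these valuation inequalities, the integral over each region is an elementary product of geometric series in $q^{-1}$, and the oscillatory factor $\psi_{2n}(n')$ forces every contribution coming from a non-integral crossing to cancel; what survives are the torus elements $t = \mathrm{diag}(\varpi^\mu)$ attached to the dominant cocharacters $\mu$ of even-column type. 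Applying the Casselman--Shalika formula
\[
W_0(\mathrm{diag}(\varpi^\mu)) = \delta_{B_{2n}}^{1/2}(\varpi^\mu)\,s_\mu(\alpha_1,\dots,\alpha_{2n}),
\]
with $\alpha_1,\dots,\alpha_{2n}$ the Satake parameters of $\pi$ and $s_\mu$ the Schur polynomial, the modulus characters of the two reduction steps and the conjugation Jacobian cancel the factor $\delta_{B_{2n}}^{1/2}$, while $|\det a|^s$ becomes $(q^{-s})^{|\mu|/2}$.

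Collecting terms, $J(s,W_0,\phi_0)$ becomes $\sum_\mu s_\mu(\alpha)\,(q^{-s})^{|\mu|/2}$, the sum running over partitions $\mu$ of length $\le 2n$ whose conjugate $\mu'$ has all even parts. By Littlewood's identity
\[
\sum_{\mu\,:\,\mu'\ \mathrm{even}} s_\mu(\alpha)\,t^{|\mu|/2} = \prod_{1\le i<j\le 2n}\frac{1}{1-\alpha_i\alpha_j\,t},
\]
and with $t = q^{-s}$ the right-hand side equals $\det(1 - q^{-s}\Lambda^2(\mathrm{diag}(\alpha)))^{-1} = L(s,\pi,\Lambda^2)$, since the eigenvalues of $\Lambda^2$ of the Satake parameter are exactly the $\alpha_i\alpha_j$ with $i<j$. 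As a sanity check, for $n=1$ one has $V_1 = 0$, hence $I(a_1) = W_0(\mathrm{diag}(a_1,a_1)) = \omega_\pi(a_1)$ and $J = \sum_{k\ge 0}(\alpha_1\alpha_2)^k q^{-ks} = (1-\alpha_1\alpha_2 q^{-s})^{-1}$, as required.

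The main obstacle is the evaluation of the inner integral $I(a)$: one must control the Iwasawa decomposition of $\nu(X)m(a)$ uniformly as $X$ ranges over $V_n$, prove that the oscillation $\psi_{2n}(n')$ annihilates all boundary contributions, and check that the surviving lattice points are in bijection, with the correct coefficients, with the even-column partitions of Littlewood's identity. This valuation-and-cancellation bookkeeping is the technical heart of the argument; by contrast the Iwasawa reduction, the Casselman--Shalika substitution, and the combinatorial identity are routine.
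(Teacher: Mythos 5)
The first thing to note is that the paper contains no proof of this proposition: it is quoted verbatim from Jacquet--Shalika, so the only available comparison is with their original argument. Your road map is in fact that argument: Iwasawa reduction to the interleaved torus $b=\mathrm{diag}(a_1,a_1,\dots,a_n,a_n)$, evaluation of the inner $V_n$-integral, the Shintani/Casselman--Shalika formula, and Littlewood's identity $\sum_{\mu'\ \mathrm{even}} s_\mu(\alpha)\,t^{|\mu|/2}=\prod_{i<j}(1-\alpha_i\alpha_j t)^{-1}$. The parts of your sketch that can be checked are correct: $\phi_0(e_n a k)=\mathbbm{1}[v(a_n)\geq 0]$; the entries of $\nu(X)$ do sit in positions $(2i-1,2j)$ below the diagonal; the even-column partitions are exactly those of the form $(m_1,m_1,\dots,m_n,m_n)$, matching the interleaved torus; and the modulus bookkeeping closes up exactly as you claim, since $\delta_{B_{2n}}^{1/2}(b)=\delta_{B_n}(a)^{2}$ while the Jacobian of $X\mapsto aXa^{-1}$ on $V_n$ is $\delta_{B_n}(a)^{-1}$, so the net prefactor is $\delta_{B_n}^{-1}\cdot\delta_{B_n}^{2}\cdot\delta_{B_n}^{-1}=1$ and only $s_\mu(\alpha)q^{-s|\mu|/2}$ survives. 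Your $n=1$ sanity check is also right.

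There is nonetheless a genuine gap, which you flag yourself: the entire content of the proposition lies in the step you defer, namely that $I(a)=\int_{V_n}W_0(\nu(X)m(a))\,dX$ equals $W_0(b)$ times the volume of the integral points, with all other contributions vanishing and with coefficient exactly one. Writing that ``the oscillatory factor forces every contribution coming from a non-integral crossing to cancel'' describes the hoped-for outcome, not a proof: the Iwasawa decomposition of $\nu(X)m(a)$ admits no usable closed form for general $X$, and the mechanism is not purely oscillatory. Already for $n=2$, where $V_2$ is one coordinate, the region $|y|>1$ dies pointwise from the support of $W_0$ (the torus part becomes $\mathrm{diag}(a_1,a_1y^{-1},a_2y,a_2)$, never dominant), with no character sum involved; for larger $n$ the actual Jacquet--Shalika argument is an induction peeling off one coordinate of $X$ at a time, using the left $(N_{2n},\psi)$-equivariance of $W_0$ to produce an explicit additive character in that coordinate and then combining Fourier vanishing with the support condition $W_0(\varpi^{\mu})=0$ unless $\mu$ is dominant. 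So: correct strategy, identical to the cited source, with the routine endgame verified, but the technical heart --- precisely the part you label as such --- is asserted rather than established, and your proposed ``cut into valuation regions and let oscillation kill the boundary'' would have to be replaced by this inductive support-plus-Fourier argument to become a proof.
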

  
  Pour finir cette section, on énonce l'équation fonctionnelle démontrée par Matringe lorsque $F$ est un corps $p$-adique. Plus précisément, on a la
 \begin{proposition}[Matringe \cite{matringe}]
 \label{funcloc}
 Supposons que $F$ est un corps $p$-adique et $\pi$ générique. Il existe un monôme $\epsilon^{JS}(s,\pi,\Lambda^2,\psi)$ en $q^s$ ou $q^{-s}$, tel que pour tous $W \in \mathcal{W}(\pi,\psi)$ et $\phi \in \mathcal{S}(F^n)$, on ait
 \begin{equation}
 \epsilon^{JS}(s, \pi, \Lambda^2, \psi) \frac{J(s,W,\phi)}{L(s,\pi,\Lambda^2)}  = \frac{J(1-s,\rho(w_{n,n})\widetilde{W},\widehat{\phi})}{L(1-s,\widetilde{\pi},\Lambda^2)},
 \end{equation}
 où $\rho$ désigne la translation à droite, $\widehat{\phi} = \mathcal{F}_\psi(\phi)$ est la transformée de Fourier de $\phi$ par rapport au caractère $\psi$ définie par
 \begin{equation}
 \mathcal{F}_\psi(\phi)(y) = \int_{F^n} \phi(x)\psi(\sum_{i=1}^n x_i y_i)dx
 \end{equation}
 pour tout $y \in F^n$ et $\widetilde{W} \in \mathcal{W}(\widetilde{\pi}, \bar{\psi})$ est la fonction de Whittaker définie par $\widetilde{W}(g) = W(w_n(g^t)^{-1})$ pour tout $g \in GL_{2n}(F)$, avec $w_n$ la matrice associée à la permutation $\bigl(\begin{smallmatrix}
    1 & \cdots & 2n  \\
    2n & \cdots &  1 
  \end{smallmatrix}\bigr)$
  et
 $w_{n,n} = \sigma_n \begin{pmatrix}
0 & 1_n \\
1_n & 0
\end{pmatrix} \sigma_n^{-1}$. On définit alors le facteur $\gamma$ de Jacquet-Shalika par la relation
\begin{equation}
\gamma^{JS}(s,\pi,\Lambda^2,\psi)  = \epsilon^{JS}(s,\pi,\Lambda^2,\psi)\frac{L(1-s,\widetilde{\pi},\Lambda^2)}{L(s,\pi,\Lambda^2)}.
\end{equation}
 \end{proposition}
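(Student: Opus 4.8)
Le plan est de suivre la m\'ethode de Rankin-Selberg de Jacquet--Piatetski-Shapiro--Shalika, adapt\'ee aux int\'egrales du carr\'e ext\'erieur, dont le c\oe{}ur est un \'enonc\'e de multiplicit\'e un (unicit\'e de fonctionnelles quasi-invariantes) combin\'e au principe de prolongement m\'eromorphe de Bernstein. D'abord, je rappellerais que les int\'egrales convergent pour $\mathrm{Re}(s)$ assez grand (Jacquet-Shalika) et que, par le r\'esultat de rationalit\'e de Kewat, chaque $J(s,W,\phi)$ appartient \`a $\mathbb{C}(q^{-s})$ et s'\'etend m\'eromorphiquement \`a tout le plan. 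La famille $\{J(s,W,\phi)\}_{W,\phi}$ engendre un id\'eal fractionnaire de $\mathbb{C}[q^{s},q^{-s}]$; on peut en normaliser un g\'en\'erateur pour qu'il co\"{\i}ncide avec $L(s,\pi,\Lambda^2)$, de fa\c{c}on compatible avec le calcul non ramifi\'e de la proposition \ref{calculnr}. Les int\'egrales normalis\'ees $J(s,W,\phi)/L(s,\pi,\Lambda^2)$ sont alors des \'el\'ements de $\mathbb{C}[q^{s},q^{-s}]$, et par la proposition \ref{nonzero} il existe $W,\phi$ pour lesquels ce quotient est une constante non nulle.

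Le c\oe{}ur de la preuve est un \'enonc\'e d'unicit\'e. Pour $h \in G_n$, notons $\Delta(h) = \begin{pmatrix} h & 0 \\ 0 & h \end{pmatrix}$ et faisons agir $G_n$ sur $\mathcal{W}(\pi,\psi)$ par $W \mapsto \rho(\sigma_n \Delta(h)\sigma_n^{-1})W$ et sur $\mathcal{S}(F^n)$ par translation \`a droite $\rho(h)\phi(x) = \phi(xh)$. Le changement de variable $g \mapsto gh$ dans l'int\'egrale montre que la forme bilin\'eaire $\Lambda_s := J(s,\cdot,\cdot)$ v\'erifie la quasi-invariance $\Lambda_s(\rho(\sigma_n\Delta(h)\sigma_n^{-1})W, \rho(h)\phi) = |\det h|^{-s}\Lambda_s(W,\phi)$. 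L'\'etape cl\'e est de montrer que, pour $s$ en dehors d'un ensemble fini, l'espace des formes bilin\'eaires sur $\mathcal{W}(\pi,\psi)\times\mathcal{S}(F^n)$ satisfaisant cette quasi-invariance est de dimension au plus un. Je ram\`enerais cet \'enonc\'e, via l'int\'egration partielle sur $V_n$ et la loi de transformation de la fonction de Whittaker sous $N_n$ (qui reconstruisent une fonctionnelle de Shalika), \`a l'unicit\'e des mod\`eles de Shalika de $\pi$, combin\'ee \`a la filtration de Bernstein-Zelevinsky de $\mathcal{S}(F^n)$ comme $G_n$-module (th\'eorie du mirabolique).

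Ensuite, je v\'erifierais que la forme duale $\Lambda'_s(W,\phi) := J(1-s, \rho(w_{n,n})\widetilde{W}, \widehat{\phi})$ satisfait la \emph{m\^eme} quasi-invariance. C'est un calcul reposant sur quatre ingr\'edients: la quasi-invariance de $J(1-s,\cdot,\cdot)$ elle-m\^eme; l'identit\'e de Fourier $\widehat{\rho(h)\phi} = |\det h|^{-1}\rho({}^th^{-1})\widehat{\phi}$; l'identit\'e $\widetilde{\rho(a)W} = \rho({}^ta^{-1})\widetilde{W}$ pour la contragr\'ediente; et le fait que $w_{n,n}$ commute \`a $\sigma_n\Delta(k)\sigma_n^{-1}$ pour tout $k \in G_n$. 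En prenant $a = \sigma_n\Delta(h)\sigma_n^{-1}$, on a ${}^ta^{-1} = \sigma_n\Delta({}^th^{-1})\sigma_n^{-1}$, et les puissances de $|\det h|$ produites par le d\'ecalage $s \mapsto 1-s$ et par la transform\'ee de Fourier s'ajoutent exactement en $|\det h|^{-s}$; ainsi $\Lambda'_s$ se transforme par le m\^eme caract\`ere que $\Lambda_s$. Par l'unicit\'e de l'\'etape pr\'ec\'edente, il existe un scalaire $\gamma^{JS}(s,\pi,\Lambda^2,\psi)$, ind\'ependant de $W$ et $\phi$, tel que $J(1-s,\rho(w_{n,n})\widetilde{W},\widehat{\phi}) = \gamma^{JS}(s,\pi,\Lambda^2,\psi)\, J(s,W,\phi)$; par le principe de Bernstein ce scalaire est une fraction rationnelle en $q^{-s}$.

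Enfin, en r\'e\'ecrivant cette relation \`a l'aide des facteurs $L$ et en posant $\epsilon^{JS}(s,\pi,\Lambda^2,\psi) = \gamma^{JS}(s,\pi,\Lambda^2,\psi)\,L(s,\pi,\Lambda^2)/L(1-s,\widetilde{\pi},\Lambda^2)$, l'\'equation fonctionnelle affirme que les deux int\'egrales normalis\'ees sont proportionnelles de facteur $\epsilon^{JS}$. Comme chacune des deux familles d'int\'egrales normalis\'ees contient une constante non nulle (les deux facteurs $L$ \'etant les pgcd exacts des familles correspondantes, par la proposition \ref{nonzero}), le facteur $\epsilon^{JS}$ est inversible dans $\mathbb{C}[q^{s},q^{-s}]$, donc c'est un mon\^ome $c\,q^{-ks}$. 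La principale difficult\'e sera l'\'enonc\'e de multiplicit\'e un de la deuxi\`eme \'etape, qui n\'ecessite une analyse g\'eom\'etrique fine des orbites (lemme g\'eom\'etrique, cellules de Bruhat) afin d'exclure toute fonctionnelle quasi-invariante suppl\'ementaire.
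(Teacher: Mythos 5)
Le papier ne d\'emontre pas cette proposition : elle y est simplement cit\'ee comme r\'esultat ext\'erieur de Matringe (l'\'equation fonctionnelle de Cogdell--Matringe), de sorte qu'il n'y a pas de preuve interne \`a laquelle vous comparer ; votre esquisse reconstruit fid\`element l'argument publi\'e. Vos calculs de quasi-invariance sont exacts (la transform\'ee de Fourier produit $|\det h|^{-1}\rho({}^{t}h^{-1})$, la contragr\'ediente produit $\rho(\sigma_n\Delta({}^{t}h^{-1})\sigma_n^{-1})$ puisque $\sigma_n^{-1}={}^{t}\sigma_n$, $w_{n,n}$ commute \`a $\sigma_n\Delta(k)\sigma_n^{-1}$, et le bilan $|\det h|^{1-s}\cdot|\det h|^{-1}=|\det h|^{-s}$ est correct), et la strat\'egie --- multiplicit\'e un hors d'un nombre fini de valeurs de $q^{-s}$ via la suite exacte $0\rightarrow\mathcal{S}(F^n\setminus\{0\})\rightarrow\mathcal{S}(F^n)\rightarrow\mathbb{C}\rightarrow 0$, unicit\'e des fonctionnelles de type Shalika sur le mirabolique, puis principe de Bernstein et inversibilit\'e de $\epsilon^{JS}$ --- est bien celle de la preuve originale. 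Une seule r\'eserve : le caract\`ere mon\^omial de $\epsilon^{JS}$ n'est automatique que si $L(s,\pi,\Lambda^2)$ d\'esigne le pgcd de la famille des $J(s,W,\phi)$, c'est-\`a-dire la fonction $L$ de Jacquet--Shalika ; affirmer que ce g\'en\'erateur peut \^etre normalis\'e pour co\"{\i}ncider avec la fonction $L$ d\'efinie par la correspondance de Langlands n'est pas une normalisation mais un th\'eor\`eme de comparaison ind\'ependant, qui ne d\'ecoule ni du calcul non ramifi\'e (proposition \ref{calculnr}) ni de la proposition \ref{nonzero}. Cela n'affecte toutefois pas le facteur $\gamma^{JS}$ lui-m\^eme, qui est le rapport intrins\`eque des deux int\'egrales et la seule chose utilis\'ee dans la suite du papier.
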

 
  \subsubsection{Théorie globale}
  La méthode que l'on utilise est une méthode de globalisation. Essentiellement, on verra $\pi$ comme une composante locale d'une représentation automorphe cuspidale. Pour ce faire, on aura besoin de l'équivalent global des intégrales $J(s, W, \phi)$.
  
  Soit $K$ un corps de nombres et $\psi_\mathbb{A}$ un caractère non trivial de $\mathbb{A}_K/K$. Soit $\Pi$ une représentation automorphe cuspidale irréductible de $GL_{2n}(\mathbb{A}_K)$. Pour $\varphi \in \Pi$, on considère
  \begin{equation}
  W_\varphi(g) = \int_{N_{2n}(K)\backslash{N_{2n}(\mathbb{A}_K)}} \varphi(ug)\psi_\mathbb{A}(u)^{-1}du
  \end{equation}
  la fonction de Whittaker associée. On considère $\psi_\mathbb{A}$ comme un caractère de $N_{2n}(\mathbb{A}_K)$ en posant $\psi_\mathbb{A}(u) = \psi_\mathbb{A}(\sum_{i=1}^{2n-1} u_{i,i+1})$. Pour $\Phi \in \mathcal{S}(\mathbb{A}_K^n)$ une fonction de Schwartz, on note $J(s, W_\varphi, \Phi)$ l'intégrale
  \begin{equation}
\int_{N_n\backslash{G_n}} \int_{V_n} W_\varphi \left(\sigma_n \begin{pmatrix}
1 & X \\
0 & 1
\end{pmatrix}\begin{pmatrix}
g & 0 \\
0 & g
\end{pmatrix} \sigma_n^{^-1}\right)dX\Phi(e_ng)|\det g|^s dg
 \end{equation}
 où l'on note $G_n$ le groupe $GL_n(\mathbb{A}_K)$, $B_n$ le sous groupe des matrices triangulaires supérieures, $N_n$ le sous-groupe de $B_n$ des matrices dont les éléments diagonaux sont $1$ et $M_n$ l'ensemble des matrices de taille $n \times n$ à coefficients dans $\mathbb{A}_K$.
 
  Finissons cette section par l'équation fonctionnelle globale démontrée par Jacquet et Shalika.
 \begin{proposition}[Jacquet-Shalika \cite{jacquet-shalika}]
 \label{funcglob}
 Les intégrales $J(s, W_\varphi, \Phi)$ convergent absolument pour $Re(s)$ suffisamment grand. De plus, $J(s, W_\varphi, \Phi)$ admet un prolongement méromorphe à tout le plan complexe et vérifie l'équation fonctionnelle suivante
 \begin{equation}
 J(s,W_\varphi,\Phi)=J(1-s, \rho(w_{n,n})\widetilde{W}_\varphi, \widehat{\Phi}),
 \end{equation}
 où $\widetilde{W}_\varphi(g) = W_\varphi(w_n(g^t)^{-1})$ et $\widehat{\Phi}$ est la transformée de Fourier de $\Phi$ par rapport au caractère $\psi_\mathbb{A}$.
 \end{proposition}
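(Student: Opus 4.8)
Le plan est de suivre la m\'ethode de Rankin--Selberg, l'\'equation fonctionnelle provenant en derni\`ere analyse de la formule sommatoire de Poisson appliqu\'ee \`a $\Phi$.

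On commencerait par \'etablir la convergence absolue pour $\mathrm{Re}(s)$ assez grand. La repr\'esentation $\Pi$ \'etant cuspidale, la forme $\varphi$ et sa fonction de Whittaker $W_\varphi$ sont \`a d\'ecroissance rapide sur les domaines de Siegel (estim\'ees de jauge); l'int\'egrale int\'erieure sur $V_n$ converge alors, et le produit avec $\Phi(e_n g)\in\mathcal{S}(\mathbb{A}_K^n)$ et $|\det g|^s$ est absolument int\'egrable sur $N_n\backslash G_n$ d\`es que $\mathrm{Re}(s)$ est assez grand.

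La deuxi\`eme \'etape, qui est l'\'etape technique centrale, consiste \`a d\'erouler la fonction de Whittaker. En rempla\c cant $W_\varphi(h)$ par $\int_{N_{2n}(K)\backslash N_{2n}(\mathbb{A}_K)}\varphi(uh)\psi_{\mathbb{A}}(u)^{-1}\,du$ et en invoquant l'invariance \`a gauche de $\varphi$ sous $GL_{2n}(K)$, on peut replier l'int\'egrale sur $N_n\backslash G_n$. Le point-cl\'e est que $GL_n(K)$ agit transitivement sur $K^n\setminus\{0\}$ avec pour stabilisateur de $e_n$ le mirabolique $P_n$: la somme des $\Phi(e_n\gamma g)$ sur $\gamma\in P_n(K)\backslash GL_n(K)$ reconstitue donc la s\'erie th\'eta $\theta_\Phi(g)=\sum_{\xi\in K^n\setminus\{0\}}\Phi(\xi g)$. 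On obtiendrait ainsi une identit\'e de la forme
\begin{equation}
J(s,W_\varphi,\Phi)=\int_{GL_n(K)\backslash GL_n(\mathbb{A}_K)}A_\varphi(g)\,\theta_\Phi(g)\,|\det g|^s\,dg,
\end{equation}
o\`u $A_\varphi$ d\'esigne une p\'eriode (coefficient de Fourier) de $\varphi$ le long du sous-groupe unipotent de type Shalika $\sigma_n V_n\sigma_n^{-1}$, fonction automorphe sous $GL_n(K)$ et \`a d\'ecroissance rapide. La principale difficult\'e est pr\'ecis\'ement cette analyse d'orbites: il faut v\'erifier que, dans la d\'ecomposition du domaine rationnel, seule l'orbite ouverte subsiste, les contributions des orbites d\'eg\'en\'er\'ees s'annulant gr\^ace \`a la cuspidalit\'e de $\varphi$ (annulation des termes constants le long des sous-groupes unipotents interm\'ediaires).

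Enfin, on appliquerait la formule de Poisson $\sum_{\xi\in K^n}\Phi(\xi g)=|\det g|^{-1}\sum_{\eta\in K^n}\widehat\Phi(\eta\,{}^tg^{-1})$. En isolant les termes $\xi=0$ et $\eta=0$, qui, int\'egr\'es contre $A_\varphi$, fournissent par la th\'eorie de Tate les \'eventuels p\^oles et donc le prolongement m\'eromorphe \`a tout le plan, le changement de variable $g\mapsto{}^tg^{-1}$ \'echange $|\det g|^s$ et $|\det g|^{1-s}$ et transforme $\theta_\Phi$ en $\theta_{\widehat\Phi}$. L'action de la transpos\'ee-inverse sur la p\'eriode $A_\varphi$ correspond au passage \`a la forme contragr\'ediente: via l'\'el\'ement long $w_n$ de $GL_{2n}$ et le bloc $w_{n,n}$ issu de la conjugaison par $\sigma_n$, la fonction $A_\varphi$ se r\'e\'ecrit en termes de $\rho(w_{n,n})\widetilde W_\varphi$. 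On retrouverait alors exactement l'\'equation fonctionnelle annonc\'ee $J(s,W_\varphi,\Phi)=J(1-s,\rho(w_{n,n})\widetilde W_\varphi,\widehat\Phi)$.
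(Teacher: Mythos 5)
Cette proposition n'est pas d\'emontr\'ee dans l'article : elle y est cit\'ee telle quelle de Jacquet--Shalika, et votre esquisse reconstitue pr\'ecis\'ement l'argument de cette r\'ef\'erence (repliement de $J(s,W_\varphi,\Phi)$ en l'int\'egrale de la p\'eriode de Shalika contre la s\'erie th\^eta sur $GL_n(K)\backslash GL_n(\mathbb{A}_K)$, sommation de Poisson o\`u les termes $\xi=0$ et $\eta=0$ fournissent les p\^oles et le prolongement m\'eromorphe, puis changement de variable $g\mapsto {}^tg^{-1}$ donnant l'\'equation fonctionnelle). Votre proposition est donc correcte dans ses grandes lignes et suit essentiellement la m\^eme voie que la preuve \`a laquelle l'article renvoie.
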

 
 Comme on peut s'y attendre, les intégrales globales sont reliées aux intégrales locales. Plus exactement, si $W_\varphi=\prod_v W_v$ et $\Phi = \prod_v \Phi_v$, où $v$ décrit les places de $K$, on a
 \begin{equation}
 J(s,W_\varphi,\Phi)=\prod_v J(s, W_v, \Phi_v),
 \end{equation}
 pour $Re(s)$ suffisamment grand.
 
 \subsubsection{Globalisation}
 
 Comme la preuve se fait par globalisation, la première chose à faire est de trouver un corps de nombres dont $F$ est une localisation. On dispose du
 \begin{lemme}[Kable \cite{kable}]
 \label{corpsglobal}
 Supposons que $F$ est un corps $p$-adique. Il existe un corps de nombres $k$ et une place $v_0$ telle que $k_{v_0} = F$, où $v_0$ est l'unique place de $k$ au dessus de $p$.
 \end{lemme}
 
Rappelons la topologie que l'on a défini sur $Temp(GL_{2n}(F))$. Soit $M$ un sous-groupe de Levi de $GL_{2n}(F)$, $P$ un parabolique de Levi $M$ et $\sigma \in \Pi_2(M)$. La classe d'isomorphisme de l'induction parabolique normalisé $i^G_P(\sigma)$ est indépendante du parabolique $P$ et on la notera $i^G_M(\sigma)$. On note $X^*(M)$ le groupe des caractères algébriques de $M$, on dispose alors d'une application $\chi \otimes \lambda \in X^*(M) \otimes i\mathbb{R} \mapsto i^G_M(\sigma \otimes \chi_\lambda) \in Temp(GL_{2n}(F))$ où $\chi_\lambda(g) = |\chi(g)|^\lambda$. On définit alors une base de voisinage de $i^G_M(\sigma)$ dans $Temp(GL_{2n}(F))$ comme l'image d'une base de voisinage de $0$ dans $X^*(M) \otimes i\mathbb{R}$.
 
 Cette topologie sur $Temp(GL_{2n}(F))$ nous permet d'énoncer le résultat principal dont on aura besoin pour la méthode de globalisation.
 \begin{proposition}[Beuzart-Plessis {\cite[Théorème 3.7.1]{beuzart-plessis}}, Finis-Lapid-Muller \cite{flm}]
 \label{globalisation}
 Soient $k$ un corps de nombres, $v_0,v_1$ deux places distinctes de $k$ avec $v_1$ non archimédienne. Soit $U$ un ouvert de $Temp(GL_{2n}(k_{v_0}))$. Alors il existe une représentation automorphe cuspidale irréductible $\Pi$ de $GL_{2n}(\mathbb{A}_k)$ telle que $\Pi_{v_0} \in U$ et $\Pi_v$ est non ramifiée pour toute place non archimédienne $v \not \in \{v_0,v_1\}$.
 \end{proposition}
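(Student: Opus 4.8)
Le plan est d'appliquer la formule des traces (simple) pour $GL_{2n}$ \`a une fonction test d\'ecomposable $f = \prod_v f_v$ dont on contr\^ole s\'epar\'ement chaque composante locale, puis de lire l'existence de $\Pi$ sur la positivit\'e du c\^ot\'e spectral. Aux places finies $v \notin \{v_0, v_1\}$, on posera $f_v = \mathbbm{1}_{GL_{2n}(\mathcal{O}_v)}$ : sa transform\'ee de Plancherel $\pi \mapsto \mathrm{tr}\,\pi(f_v) = \dim \pi^{GL_{2n}(\mathcal{O}_v)}$ est positive et ne charge que les repr\'esentations non ramifi\'ees, ce qui imposera la non-ramification de $\Pi_v$. \`A la place $v_1$, on choisira $f_{v_1}$ \'egal \`a un coefficient matriciel d'une repr\'esentation supercuspidale de $GL_{2n}(k_{v_1})$, de sorte que $\mathrm{tr}\,\pi(f_{v_1})$ soit positif et, sur le dual temp\'er\'e, ne soit non nul que sur la s\'erie de cette supercuspidale ; ceci forcera $\Pi_{v_1}$ supercuspidale, donc $\Pi$ cuspidale (le spectre r\'esiduel de $GL_{2n}$ n'ayant aucune composante locale supercuspidale).

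\`A la place $v_0$, on choisira $f_{v_0} = g * g^*$ avec $g$ tel que $\widehat{f_{v_0}} = |\widehat{g}|^2$ soit une fonction positive support\'ee dans $U$ sur le dual temp\'er\'e ; ceci est possible via l'isomorphisme de Plancherel de Harish-Chandra, car $U$ est un ouvert non vide du dual temp\'er\'e, lequel est le support de la mesure de Plancherel, donc $U$ est de mesure de Plancherel strictement positive. Enfin, aux places archim\'ediennes, on prendra $f_v$ de transform\'ee de Plancherel positive et d'int\'egrale strictement positive contre la mesure de Plancherel.

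Avec ces choix, le coefficient supercuspidal en $v_1$ annule la contribution du spectre continu ainsi que les termes g\'eom\'etriques non elliptiques, et la formule des traces fournit
\begin{equation*}
\sum_{\Pi \text{ cuspidale}} m(\Pi) \prod_v \mathrm{tr}\,\Pi_v(f_v) = (\text{c\^ot\'e g\'eom\'etrique}).
\end{equation*}
Par construction, chaque facteur $\mathrm{tr}\,\Pi_v(f_v)$ est positif ou nul, et toute $\Pi$ contribuant effectivement v\'erifie $\Pi_{v_1}$ supercuspidale et $\Pi_v$ non ramifi\'ee pour $v \notin \{v_0, v_1\}$ finie. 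Il suffit donc de montrer que cette somme est strictement positive : on disposera alors d'un terme non nul, d'o\`u une $\Pi$ cuspidale avec $\mathrm{tr}\,\Pi_{v_0}(f_{v_0}) > 0$, et l'on conclura $\Pi_{v_0} \in U$.

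La difficult\'e est double. D'une part, il faudra \'etablir la non-annulation du c\^ot\'e g\'eom\'etrique : apr\`es la r\'eduction par le coefficient supercuspidal il ne subsiste que des int\'egrales orbitales elliptiques, dont le terme central $\mathrm{vol}([GL_{2n}])\, f(1)$ domine les autres apr\`es un choix convenable des composantes en $v_0$ et aux places archim\'ediennes, ce qui assure la positivit\'e stricte. D'autre part, et c'est l'obstacle principal, il faudra garantir que la composante cuspidale $\Pi_{v_0}$ obtenue est bien temp\'er\'ee et dans $U$, alors que la conjecture de Ramanujan pour $GL_{2n}$ est ouverte et qu'une composante cuspidale n'est pas a priori temp\'er\'ee : la positivit\'e brute ne suffit pas, car $\mathrm{tr}\,\Pi_{v_0}(f_{v_0})$ peut \^etre non nul pour un $\Pi_{v_0}$ non temp\'er\'e. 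On l\`eve cet obstacle en invoquant le th\'eor\`eme de multiplicit\'e limite de Finis-Lapid-Muller \cite{flm}, selon lequel la mesure spectrale automorphe des familles consid\'er\'ees converge vers la mesure de Plancherel, port\'ee par le dual temp\'er\'e, de sorte que la masse des composantes non temp\'er\'ees devient n\'egligeable ; la reformulation de Beuzart-Plessis \cite[Th\'eor\`eme 3.7.1]{beuzart-plessis} traduit exactement cette convergence en l'\'enonc\'e d'existence voulu, une fois $U$ fix\'e de mesure de Plancherel strictement positive.
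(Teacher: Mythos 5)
Le papier ne d\'emontre pas cette proposition : il l'importe telle quelle de Beuzart-Plessis \cite[Th\'eor\`eme 3.7.1]{beuzart-plessis}, en signalant seulement qu'elle d\'ecoule des r\'esultats de multiplicit\'e limite de Finis-Lapid-Muller \cite{flm}. Il n'y a donc pas de preuve interne \`a laquelle comparer votre texte ; la question est de savoir si votre esquisse constituerait une d\'emonstration autonome, et l\`a il y a un d\'efaut r\'edhibitoire : elle est circulaire exactement au point d\'ecisif. Vous reconnaissez vous-m\^eme que la formule des traces simple avec des fonctions de type positif ne peut pas forcer $\Pi_{v_0}$ \`a \^etre temp\'er\'ee et dans $U$ (un $\Pi_{v_0}$ non temp\'er\'e peut tr\`es bien v\'erifier $\mathrm{tr}\,\Pi_{v_0}(f_{v_0}) > 0$ m\^eme si la transform\'ee de Plancherel de $f_{v_0}$ est support\'ee dans $U$), et vous levez cet obstacle en invoquant... le Th\'eor\`eme 3.7.1 de Beuzart-Plessis, c'est-\`a-dire l'\'enonc\'e m\^eme \`a d\'emontrer. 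Invoquer \cite{flm} est l\'egitime (c'est ce que fait le papier), mais alors tout l'\'echafaudage de formule des traces qui pr\'ec\`ede est redondant : la propri\'et\'e de multiplicit\'e limite est elle-m\^eme \'etablie par ces m\'ethodes, et le vrai travail restant --- que votre esquisse ne contient pas --- est le passage de la convergence faible des mesures automorphes vers la mesure de Plancherel (convergence sur le dual unitaire muni de la topologie de Fell, en faisant d\'ecro\^itre le niveau en $v_1$) \`a l'existence d'un point de l'ouvert $U$ du dual \emph{temp\'er\'e} : il faut choisir un ouvert relativement compact $U' \subset U$ de mesure de Plancherel strictement positive et de bord n\'egligeable, et contr\^oler la masse des repr\'esentations non temp\'er\'ees adh\'erentes \`a $U'$ dans le dual unitaire. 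C'est pr\'ecis\'ement ce que fait la preuve de Beuzart-Plessis, et c'est ce qui manque ici.

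Signalons aussi deux impr\'ecisions secondaires dans la partie formule des traces, qui demanderaient d'\^etre r\'epar\'ees m\^eme si l'on acceptait ce sch\'ema : le centre de $GL_{2n}$ n'\'etant pas compact, un coefficient matriciel d'une supercuspidale n'est \`a support compact que modulo le centre, de sorte qu'il faut fixer un caract\`ere central unitaire (ou travailler sur $GL_{2n}(\mathbb{A}_k)^1$) pour que les deux c\^ot\'es de la formule convergent ; et la stricte positivit\'e du c\^ot\'e g\'eom\'etrique (domination du terme central) n'est pas gratuite : elle requiert l'argument usuel de r\'etr\'ecissement du support \`a une place archim\'edienne pour \'eliminer les \'el\'ements elliptiques rationnels \`a coefficients entiers hors du centre, ce que vous affirmez sans le justifier. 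En r\'esum\'e : le squelette Deligne--Kazhdan que vous d\'ecrivez est bien le m\'ecanisme sous-jacent \`a ce type de globalisation, mais en l'\'etat votre texte ne prouve pas l'\'enonc\'e, il le cite.
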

 
 Cette proposition de Beuzart-Plessis est une conséquence des résultats de Finis-Lapid-Muller \cite{flm}.
 
 \subsubsection{Fonctions tempérées}
 \label{fonctiontemperees}
 On aura besoin dans la suite de connaître la dépendance que $J(s, W, \phi)$ lorsque l'on fait varier la représentation $\pi$. Pour ce faire, on introduit la notion de fonction tempérée et on étend la définition de $J(s,W,\phi)$ pour ces fonctions tempérées.
 
On note $K_{2n}$ le sous-groupe compact maximal de $GL_{2n}(F)$ défini par $K_{2n} = GL_{2n}(\mathcal{O}_F)$ lorsque $F$ est p-adique et $K_{2n} = \{ g \in GL_{2n}(F), g \overline{g}^t = I_n\}$ lorsque $F = \mathbb{R}$ ou $\mathbb{C}$, où $\overline{g}$ est la conjuguée complexe.

L'espace des fonctions tempérées $C^w(N_{2n}(F)\backslash{GL_{2n}(F)}, \psi)$ est l'espace des fonctions $f : GL_{2n}(F) \rightarrow \mathbb{C}$ telles que $f(ng) = \psi(n)f(g)$ pour tous $n \in N_{2n}(F)$ et $g \in GL_{2n}(F)$, on impose les conditions suivantes :
 \begin{itemize}
 \item si $F$ est $p$-adique, $f$ est invariante à droite par un sous-groupe compact ouvert $K$ et il existe $d > 0$ et $C > 0$ tels que
 \begin{equation}
 \label{condtempp}
 |f(nak)| \leq C \delta_{B_{2n}}(a)^{\frac{1}{2}} \log(||a||)^d,
 \end{equation}
 où $||a|| = 1 + max(|a_{i,i}|, |a_{i,i}|^{-1})$, pour tous $n \in N_{2n}(F)$, $a \in A_{2n}(F)$ et $k \in K_{2n}$;
 \item si $F$ est archimédien, $f$ est $C^\infty$ et il existe $d > 0$ tel que pour tout $u \in \mathcal{U}(\mathfrak{gl}_{2n}(F))$, il existe $C > 0$ tel que
 \begin{equation}
 \label{condtemparch}
 |(R(u)f)(nak)| \leq C \delta_{B_{2n}}(a)^{\frac{1}{2}} \log(||a||)^d,
 \end{equation}
 pour tous $n \in N_{2n}(F)$, $a \in A_{2n}(F)$, $k \in K_{2n}$.
 \end{itemize}
 
Lorsque $F$ est $p$-adique, pour $d > 0$ et $K$ un sous-groupe compact ouvert de $GL_{2n}$, on note $C^w_d(N_{2n}(F)\backslash{GL_{2n}(F)}, \psi)^K$ l'espace des fonctions tempérées invariante à droite par $K$ et vérifiant la condition \ref{condtempp}. On munit $C^w_d(N_{2n}(F)\backslash{GL_{2n}(F)}, \psi)^K$ de la topologie provenant de la norme $\sup\limits_{a \in A_{2n}, k \in K_{2n}} \frac{|f(ak)|}{\delta_{B_{2n}}(a)^{\frac{1}{2}} \log(||a||)^d}$ qui en fait un espace de Banach. On munit alors $C^w(N_{2n}(F)\backslash{GL_{2n}(F)}, \psi) = \bigcup_{d,K}C^w_d(N_{2n}(F)\backslash{GL_{2n}(F)}, \psi)^K$ de la topologie limite inductive.

Lorsque $F$ est archimédien, pour $d > 0$, on note $C^w_d(N_{2n}(F)\backslash{GL_{2n}(F)}, \psi)$ l'espace des fonctions tempérées vérifiant la condition \ref{condtemparch}. On munit $C^w_d(N_{2n}(F)\backslash{GL_{2n}(F)}, \psi)$ de la topologie provenant des semi-normes $p_u(f) = \sup\limits_{a \in A_{2n}, k \in K_{2n}} \frac{|(R(u)f)(ak)|}{\delta_{B_{2n}}(a)^{\frac{1}{2}} \log(||a||)^d}$ pour $u \in \mathcal{U}(\mathfrak{gl}_{2n}(F))$. On munit alors $C^w(N_{2n}(F)\backslash{GL_{2n}(F)}, \psi) = \bigcup_d C^w_d(N_{2n}(F)\backslash{GL_{2n}(F)}, \psi)$ de la topologie limite inductive.
 
On rappelle la majoration des fonctions tempérées sur la diagonale,
\begin{lemme}[{\cite[Lemme 2.4.3]{beuzart-plessis}}]
\label{majtemp}
Soit $W \in C^w(N_{2n}(F)\backslash{GL_{2n}(F)}, \psi)$. Il existe $d > 0$ tel que pour tout $N \geq 1$, il existe $C > 0$ tel que
\begin{equation}
|W(bk)| \leq C\prod_{i=1}^{2n-1} (1 + |\frac{b_i}{b_{i+1}}|)^{-N}\delta_{B_{2n}}(b)^{\frac{1}{2}}\log(||b||)^d,
\end{equation}
pour tous $b \in A_{2n}(F)$ et $k \in K_{2n}$.
\end{lemme}

\begin{lemme}[{\cite[Lemme 2.4.4]{beuzart-plessis}}]
\label{convergenceAn}
Pour tout $C > 0$, il existe $N$ tel que pour tous $s$ vérifiant $0 < Re(s) < C$ et $d > 0$, l'intégrale
\begin{equation}
\int_{A_n} \prod_{i=1}^{n-1} (1+|\frac{a_i}{a_{i+1}}|)^{-N}(1+|a_n|)^{-N}\log(||a||)^d|\det a|^s da
\end{equation}
converge absolument.
\end{lemme}

On étend la définition des intégrales $J(s, W, \phi)$ aux fonctions tempérées $W$, on montre maintenant la convergence de ces intégrales dans le
\begin{lemme}
\label{convtemp}
Pour $W \in C^w(N_{2n}(F)\backslash{GL_{2n}(F)}, \psi)$ et $\phi \in \mathcal{S}(F^n)$, l'intégrale $J(s, W, \phi)$ converge absolument pour tout $s \in \mathbb{C}$ vérifiant $Re(s) > 0$. De plus, pour tous $\phi \in \mathcal{S}(F^n)$ et $s \in \mathbb{C}$ tels que $Re(s) > 0$, la forme linéaire $W \in C^w(N_{2n}(F)\backslash{GL_{2n}(F)}, \psi) \mapsto J(s, W,  \phi)$ est continue.
\end{lemme}
 
 \begin{proof}
 Soit $G_n = N_nA_nK_n$ la décomposition d'Iwasawa de $G_n$. Il suffit de montrer la convergence de l'intégrale
 \begin{equation}
 \int_{A_n} \int_{K_n} \int_{V_n} \left|W\left(\sigma_n \begin{pmatrix}
1 & X \\
0 & 1
\end{pmatrix}\begin{pmatrix}
ak & 0 \\
0 & ak
\end{pmatrix} \sigma_n^{-1}\right) \phi(e_nak)\right| dX dk \left|\det a\right|^{Re(s)} \delta_{B_n}^{-1}(a) da.
 \end{equation}
 
 On pose $u_X = \sigma_n \begin{pmatrix}
1 & X \\
0 & 1
\end{pmatrix} \sigma_n^{-1}$, ce qui nous permet d'écrire
\begin{equation}
\sigma_n \begin{pmatrix}
1 & X \\
0 & 1
\end{pmatrix}\begin{pmatrix}
a & 0 \\
0 & a
\end{pmatrix} = b u_{a^{-1}Xa} \sigma_n,
\end{equation}
 où $b=diag(a_1,a_1,a_2,a_2,...)$. On effectue le changement de variable $X \mapsto aXa^{-1}$, l'intégrale devient alors
 \begin{equation}
\int_{A_n} \int_{K_n} \int_{V_n} \left|W\left(b u_X \sigma_n \begin{pmatrix}
k & 0 \\
0 & k
\end{pmatrix} \sigma_n^{-1} \right)\phi(e_nak)\right|dX dk |\det a|^{Re(s)} \delta_{B_n}^{-2}(a) da.
 \end{equation}
 
 On écrit $u_X = n_Xt_Xk_X$ la décomposition d'Iwasawa de $u_X$ et on pose $k_\sigma = \sigma_n \begin{pmatrix}
k & 0 \\
0 & k
\end{pmatrix} \sigma_n^{-1}$. Le lemme \ref{majtemp} donne alors
 \begin{equation}
 |W(bt_Xk_Xk_\sigma)| \leq C \prod_{i=1}^{2n-1} (1+  |\frac{t_jb_j}{t_{j+1}b_{j+1}}|)^{-2N} \delta_{B_{2n}}^{\frac{1}{2}}(bt_X)\log(||bt_X||)^d,
 \end{equation}
 où $t_X = diag(t_1, ..., t_{2n})$.
 
 On aura besoin d'inégalités prouvées par Jacquet et Shalika concernant les $t_j$. On dispose de la
 \begin{proposition}[Jacquet-Shalika {\cite[Proposition 4]{jacquet-shalika}}]
 \label{maj_tj}
 On a $|t_k| \geq 1$ lorsque $k$ est impair et $|t_k| \leq 1$ lorsque $k$ est pair. En particulier, $|\frac{t_j}{t_{j+1}}| \geq 1$ lorsque $j$ est impair et $|\frac{t_j}{t_{j+1}}| \leq 1$ lorsque $j$ est pair.
 \end{proposition}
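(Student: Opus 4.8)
The plan is to read off the Iwasawa diagonal $t_X$ from norms of exterior powers and then to exploit the very rigid shape of $u_X$. Write $e_1,\dots,e_{2n}$ for the standard basis covectors of $(F^{2n})^*$ and $R_i(g)$ for the $i$-th row of a matrix $g$, viewed as an element of $(F^{2n})^*$. I equip each $\bigwedge^k (F^{2n})^*$ with the $K_{2n}$-invariant norm $\|\cdot\|$: the sup-norm of the coordinates in the $p$-adique case, the hermitienne norm in the archim\'edien case. If $g=nak$ is an Iwasawa decomposition with $n\in N_{2n}$, $a=\mathrm{diag}(a_1,\dots,a_{2n})$ and $k\in K_{2n}$, then expanding $R_j(g)\wedge\cdots\wedge R_{2n}(g)$ and using that the upper unipotent part contributes $1$ and $k$ preserves the norm gives
\begin{equation}
\|R_j(g)\wedge\cdots\wedge R_{2n}(g)\| = |a_j a_{j+1}\cdots a_{2n}|.
\end{equation}
Applying this to $g=u_X$ and setting $Q_j=\|R_j\wedge\cdots\wedge R_{2n}\|$, one has $|t_k|=Q_k/Q_{k+1}$, so the claim reduces to $Q_k\geq Q_{k+1}$ for $k$ impair and $Q_k\leq Q_{k+1}$ for $k$ pair.

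Next I would make $u_X$ explicit. Conjugation by $\sigma_n$ carries the entry $X_{ij}$ (sitting at row $i$, column $n+j$) to the position $(2i-1,2j)$, so $u_X$ is unipotent with $(u_X)_{2i-1,2j}=X_{ij}$ and $1$'s on the diagonal. Since $X\in V_n$ is strictement triangulaire inf\'erieure, $X_{ij}\neq 0$ forces $i>j$, hence $2i-1>2j$, and $u_X$ is triangulaire inf\'erieure. Its rows are therefore
\begin{equation}
R_{2i}=e_{2i}, \qquad R_{2i-1}=e_{2i-1}+\sum_{j<i} X_{ij}\,e_{2j}.
\end{equation}
The key observation is a reduction inside the wedge: in $Q_j$ one may delete from each odd row $R_{2i-1}$ every term $X_{ij'}e_{2j'}$ for which $2j'\geq j$, because then the row $R_{2j'}=e_{2j'}$ is itself a factor of the wedge and kills that direction. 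After this deletion every surviving off-diagonal term of an odd row lies in a column $2j'<j$, strictly below the block of columns carrying the wedged rows.

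For $k=2c$ pair, the reduction gives $Q_{2c}=\|e_{2c}\wedge\Omega\|$, where $\Omega$ is the wedge of the reduced rows $2c+1,\dots,2n$ (keeping only columns $<2c$), while $Q_{2c+1}=\|\Omega'\|$, where $\Omega'$ is built from the same rows but retains in addition the terms $X_{i,c}e_{2c}$. Thus $\Omega'$ differs from $\Omega$ only by adding multiples of $e_{2c}$ to its factors, so $e_{2c}\wedge\Omega=e_{2c}\wedge\Omega'$, and the submultiplicativit\'e $\|v\wedge\omega\|\leq\|v\|\,\|\omega\|$ together with $\|e_{2c}\|=1$ yields $Q_{2c}=\|e_{2c}\wedge\Omega'\|\leq\|\Omega'\|=Q_{2c+1}$, i.e. $|t_{2c}|\leq 1$.

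For $k=2c-1$ impair, the deletion conditions for the common rows $2c,\dots,2n$ coincide (both read $j'\leq c-1$), so with the same $\Omega$ one has $Q_{2c}=\|\Omega\|$ and $Q_{2c-1}=\|R_{2c-1}\wedge\Omega\|$ with $R_{2c-1}=e_{2c-1}+\sum_{j\leq c-1}X_{cj}e_{2j}$. Now the column $2c-1$ occurs in $R_{2c-1}$ with coefficient $1$ and in no row of $\Omega$; developing along that column (in the $p$-adique case, comparing the minor on $\{2c-1\}\cup J_0$ with the minor $J_0$ realizing $\|\Omega\|$; in the archim\'edien case, using that $e_{2c-1}$ is a unit covector orthogonal to the span of $\Omega$, whence $\|(e_{2c-1}+w)\wedge\Omega\|^2=\|e_{2c-1}\wedge\Omega\|^2+\|w\wedge\Omega\|^2\geq\|\Omega\|^2$) gives $Q_{2c-1}\geq\|\Omega\|=Q_{2c}$, i.e. $|t_{2c-1}|\geq 1$. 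The ``en particulier'' on the quotients $|t_j/t_{j+1}|$ is then immediate since consecutive indices have opposite parity. The main obstacle is not any single estimate but the bookkeeping of the reduction step: one must verify that the deleted terms genuinely lie in columns below the relevant block and that both norms obey the two wedge inequalities used, so that the parity pattern emerges in a single uniform argument.
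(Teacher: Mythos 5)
The paper does not prove this proposition at all: it is quoted directly from Jacquet--Shalika (\cite[Proposition 4]{jacquet-shalika}), so the only meaningful comparison is with that original argument, which your proof essentially reconstructs in a self-contained way. Your two pillars are correct. First, for $g=nak$ the unipotent factor fixes $e_j\wedge\cdots\wedge e_{2n}$ and $K_{2n}$ preserves the chosen norm, so $\|R_j(g)\wedge\cdots\wedge R_{2n}(g)\|=|a_j\cdots a_{2n}|$ and hence $|t_k|=Q_k/Q_{k+1}$. Second, your description of $u_X$ agrees with the paper's conventions (the paper itself uses $\sigma_n\,\mathrm{diag}(a,a)\,\sigma_n^{-1}=\mathrm{diag}(a_1,a_1,a_2,a_2,\dots)$), so the even rows are the $e_{2i}$ and the odd rows are $e_{2i-1}+\sum_{j<i}X_{ij}e_{2j}$, and $u_X$ is indeed lower unipotent. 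The deletion step is legitimate (adding to one factor of a wedge a multiple of another factor does not change the wedge), the parity bookkeeping ($j'\le c-1$ versus $j'\le c$) is exact, and the final inequalities rest on facts valid for both norms: the submultiplicativity $\|v\wedge\omega\|\le\|v\|\,\|\omega\|$ (ultrametric bound on coordinates in the $p$-adic case, operator-norm bound for $v\wedge\cdot$ in the archimedean case), and the lower bound $\|(e_{2c-1}+w)\wedge\Omega\|\ge\|\Omega\|$ (the maximizing coordinate set $J_0$ of $\Omega$ cannot contain the column $2c-1$, so the coefficient on $\{2c-1\}\cup J_0$ is exactly $\pm\Omega_{J_0}$; respectively Pythagoras, since $e_{2c-1}$ does not occur in $\Omega$ nor in $w$). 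Two small points to tidy: in the odd case, what you call the same $\Omega$ must be the wedge of the reduced rows $2c,\dots,2n$, hence contains the extra factor $e_{2c}$ relative to the $\Omega$ of the even case --- this is harmless, since $e_{2c}$ has no component on column $2c-1$, but the notation should be separated; and the two norm facts just mentioned deserve a one-line proof each, though both are standard. With these glosses your argument is complete and correct, and it is in substance the same exterior-power argument as Jacquet--Shalika's.
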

 
 On combine alors cette proposition avec le fait que $\frac{b_j}{b_{j+1}} = 1$ lorsque $j$ est impair et $\frac{b_j}{b_{j+1}} = \frac{a_\frac{j}{2}}{a_{\frac{j}{2}+1}}$ lorsque $j$ est pair. Ce qui nous permet de majorer $(1+  |\frac{t_jb_j}{t_{j+1}b_{j+1}}|)^{-2N}$ par $|\frac{t_j}{t_{j+1}}|^{-2N}$ lorsque $j$ est impair et par $|\frac{t_j}{t_{j+1}}|^{-N}(1+  |\frac{a_{j/2}}{a_{j/2+1}}|)^{-N}$ lorsque $j$ est pair. Ce qui donne
 \begin{equation}
 \begin{split}
 |W(bt_Xk_Xk_\sigma)| &\leq C \prod_{j=1}^{2n-1}|\frac{t_j}{t_{j+1}}|^{-N} \prod_{j=1, \text{j impair}}^{2n-1} |\frac{t_j}{t_{j+1}}|^{-N} \prod_{i=1}^{n-1} (1 + |\frac{a_i}{a_{i+1}}|)^{-N} \delta_{B_{2n}}^{\frac{1}{2}}(bt_X)\log(||bt_X||)^d \\
 &\leq C \prod_{j=1, \text{j impair}}^{2n-1} |\frac{t_j}{t_{j+1}}|^{-N}  \prod_{i=1}^{n-1} (1 + |\frac{a_i}{a_{i+1}}|)^{-N} \delta_{B_{2n}}^{\frac{1}{2}}(bt_X)\log(||bt_X||)^d,
  \end{split}
 \end{equation}
 puisque $\prod_{j=1}^{2n-1}|\frac{t_j}{t_j+1}|^{-N} = |\frac{t_1}{t_{2n}}|^{-N} \leq 1$ d'après la proposition \ref{maj_tj}.
 
 De plus, encore d'après la proposition \ref{maj_tj}, on a
 \begin{equation}
 \prod_{j=1, \text{j impair}}^{2n-1} |\frac{t_j}{t_{j+1}}|^{-N} \leq \prod_{j=1, \text{j impair}}^{2n-1} \frac{1}{|t_j|^N}.
 \end{equation}
 
 Pour finir, on aura besoin de la
 \begin{proposition}[Jacquet-Shalika {\cite[Proposition 5]{jacquet-shalika}}]
 \label{maj_mX}
 Pour $X \in Lie(\overline{N}_n)$, on pose $||X|| = sup_{i,j} |X_{i,j}|$. On pose $m(X) = \sqrt{1+||X||}$ lorsque $F$ est archimédien et $m(X) = sup(1, ||X||)$ lorsque $F$ est non-archimédien. Il existe une constante $\alpha > 0$ telle que
 pour tout $X \in Lie(\overline{N}_n)$, on ait
 \begin{equation}
\prod_{j=1, \text{j impair}}^{2n-1} |t_j| \geq m(X)^\alpha
\end{equation}
 \end{proposition}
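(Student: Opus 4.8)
The plan is to read the sizes of the $t_j$ off the minors of $u_X$ and then feed them into Proposition \ref{maj_tj}. First I would make $u_X$ explicit: conjugating $\begin{pmatrix}1&X\\0&1\end{pmatrix}$ by $\sigma_n$ moves the entry $X_{i,j}$ to position $(2i-1,2j)$, so for $X\in V_n$ (i.e. $X_{i,j}=0$ unless $i>j$) the matrix $u_X$ is lower triangular unipotent, its odd rows being $e_{2i-1}+\sum_{j<i}X_{i,j}e_{2j}$ and its even rows being the standard vectors $e_{2i}$. Thus only the odd rows carry the coefficients of $X$, and each carries one row of $X$, placed in the low even columns.

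Next I would use the Iwasawa--Pl\"ucker identity. Writing $u_X=n_Xt_Xk_X$ with $t_X=\mathrm{diag}(t_1,\dots,t_{2n})$, for each $l$ the product $\prod_{i=2n-l+1}^{2n}|t_i|$ equals the norm of the wedge of the bottom $l$ rows of $u_X$ (the vector of its $l\times l$ minors): the upper unipotent $n_X$ fixes $e_{2n-l+1}\wedge\cdots\wedge e_{2n}$ under right multiplication, $t_X$ scales it by $\prod_{i>2n-l}t_i$, and $k_X$ preserves the norm. Taking $l=2p$ and using that the $p$ even rows among the bottom $2p$ are the standard vectors $e_{2i}$ $(i>n-p)$ to clean the low even columns out of the odd rows, this norm reduces to $R_{2p}:=\big\|\bigwedge_{i=n-p+1}^{n}\big(e_{2i-1}+\sum_{j\le n-p}X_{i,j}e_{2j}\big)\big\|$. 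The all-odd Pl\"ucker coordinate of this wedge equals $1$, so $R_{2p}\ge 1$; and replacing, in a single factor $v_{i_0}$, the vector $e_{2i_0-1}$ by the monomial $X_{i_0,j_0}e_{2j_0}$ with $j_0\le n-p<i_0$ produces a Pl\"ucker coordinate equal to $\pm X_{i_0,j_0}$, whence $R_{2p}\ge\sup\{|X_{i,j}|:\ i>j,\ j\le n-p\}$.

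Finally I would combine this with Proposition \ref{maj_tj}. Since $R_{2p}=\prod_{i=n-p+1}^{n}|t_{2i-1}|\,|t_{2i}|$ and $|t_{2i}|\le 1$, we obtain $\prod_{i=n-p+1}^{n}|t_{2i-1}|\ge R_{2p}$; and since moreover $|t_{2i-1}|\ge 1$ for every $i$, the missing factors only help, so $\prod_{j\text{ impair}}|t_j|=\prod_{i=1}^{n}|t_{2i-1}|\ge R_{2p}$ for every $p$. Given the largest entry $|X_{i_0,j_0}|=\|X\|$ (necessarily $i_0>j_0$), choosing $p$ with $n-i_0<p\le n-j_0$ makes $j_0\le n-p<i_0$, so this $R_{2p}\ge\|X\|$; together with the bound $R_{2p}\ge 1$ this gives $\prod_{j\text{ impair}}|t_j|\ge\max(1,\|X\|)$, which dominates a fixed positive power of $m(X)$ (exactly $m(X)$ in the $p$-adic case), proving the claim.

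The main obstacle is the second step: setting up the Iwasawa--Pl\"ucker identity with the correct flag, so that $n_X$ acts trivially, and carrying out the reduction of the bottom-$2p$-row wedge to $R_{2p}$, where the interleaving of $\sigma_n$ and the triviality of the even rows must be used carefully. The combinatorial point in the last step --- selecting, for each potentially dominant entry $X_{i_0,j_0}$, a value of $p$ for which $R_{2p}$ already detects that entry while Proposition \ref{maj_tj} lets us discard all remaining factors --- is what turns an estimate on a single minor into an estimate on the entire odd product.
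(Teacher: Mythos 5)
The paper itself contains no proof of this proposition: it is quoted verbatim (as Proposition 5) from Jacquet--Shalika \cite{jacquet-shalika}, so your attempt can only be measured against the cited source and on its own merits. Your method --- identifying $\prod_{i=2n-l+1}^{2n}|t_i|$ with the norm of the wedge of the bottom $l$ rows of $u_X$, using that after conjugation by $\sigma_n$ only the odd rows of $u_X$ carry the entries of $X$ while the even rows are standard basis vectors, then reading off Pl\"ucker coordinates --- is the standard exterior-algebra argument, and it is essentially the one used by Jacquet--Shalika. The key steps are correct: the upper unipotent fixes the bottom wedge, $k_X$ preserves the ($K$-invariant) norm, the even rows kill the high even columns so the norm reduces to $R_{2p}$, the all-odd coordinate gives $R_{2p}\ge 1$, the single-replacement coordinate gives $R_{2p}\ge|X_{i_0,j_0}|$ when $j_0\le n-p<i_0$, and Proposition \ref{maj_tj} lets you discard the factors $|t_{2i}|\le 1$ and the remaining $|t_{2i-1}|\ge 1$. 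One slip of no consequence: the single-replacement coordinates only yield $R_{2p}\ge\sup\{|X_{i,j}|:\, i>n-p\ge j\}$, not $\sup\{|X_{i,j}|:\, i>j,\ j\le n-p\}$ (entries with $i\le n-p$ do not occur in $R_{2p}$ at all); your final step uses only the correct range, so nothing is lost.

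The genuine problem is your last sentence. Your bound $\prod_{j\ \mathrm{impair}}|t_j|\ge\max(1,\|X\|)$ proves the proposition with $\alpha=1$ in the non-archimedean case, but in the archimedean case $\max(1,\|X\|)$ does \emph{not} dominate $(1+\|X\|)^{\alpha/2}$ for any $\alpha>0$: for $0<\|X\|\le 1$ the left-hand side equals $1$ while the right-hand side is strictly larger than $1$. This defect cannot be repaired, because the statement as transcribed in the paper is itself false over $\mathbb{R}$: for $n=2$ and $X$ whose unique nonzero entry is $x$, the matrix $u_X$ is the identity plus $x$ in position $(3,2)$, and a direct Iwasawa computation gives $|t_1t_3|=\sqrt{1+x^2}$, which is smaller than $(1+|x|)^{\alpha/2}$ for all small $x\ne 0$. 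What is true, and what the convergence arguments of sections \ref{gamma} and \ref{formInv} actually use (only the decay for $\|X\|$ large matters there), is the inequality up to a multiplicative constant: your bound gives $\prod_{j\ \mathrm{impair}}|t_j|\ge\tfrac{1}{\sqrt{2}}\sqrt{1+\|X\|}=\tfrac{1}{\sqrt{2}}\,m(X)$. Alternatively, since in the archimedean case the norm of the Pl\"ucker vector is Euclidean, your two coordinates combine to $R_{2p}\ge\sqrt{1+\|X\|^2}$, which yields a constant-free inequality for the more usual normalisation $m(X)=\sqrt{1+\|X\|^2}$. So: right method, correct proof of the statement up to a multiplicative constant; you should flag explicitly that the constant-free archimedean inequality, as printed, requires either that constant or that change in the definition of $m(X)$, rather than asserting that $\max(1,\|X\|)$ dominates a power of $\sqrt{1+\|X\|}$.
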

 
Grâce à cette proposition, on obtient la majoration
 \begin{equation}
 |W(bt_Xk_Xk_\sigma)| \leq C m(X)^{-\alpha N}  \prod_{i=1}^{n-1} (1 + |\frac{a_i}{a_{i+1}}|)^{-N} \delta_{B_{2n}}^{\frac{1}{2}}(bt_X)\log(||bt_X||)^d.
 \end{equation}
 
 D'autre part, il existe $C' > 0$ tel que
 \begin{equation}
 |\phi(e_nak)| \leq C'(1+|a_n|)^{-N}.
 \end{equation}
 
 L'intégrale $J(s, W, \phi)$ est alors majorée (à une constante près) par le maximum du produit des intégrales
 \begin{equation}
 \int_{V_n} m(X)^{-\alpha N} \delta_{B_{2n}}^{\frac{1}{2}}(t_X)\log(||t_X||)^{d-j} dX
 \end{equation}
 et
 \begin{equation}
 \int_{A_n}  \prod_{i=1}^{n-1} (1+ |\frac{a_i}{a_{i+1}}|)^{-N} (1+|a_n|)^{-N}\log(||b||)^j |\det a|^{Re(s)} \delta_{B_{2n}}^{\frac{1}{2}}(b)\delta_{B_n}^{-2}(a) da,
 \end{equation}
 pour $j$ compris entre $0$ et $d$. La première intégrande est majorée par $m(X)^{-\alpha N + c}$, où 
 $c$ est une constante, on en déduit que la première intégrale converge pour $N$ assez grand et la deuxième pour $N$ assez grand lorsque $Re(s) > 0$ d'après le lemme \ref{convergenceAn} où l'on a utilisé la relation $\delta_{B_{2n}}^{\frac{1}{2}}(b) = \delta_{B_n}^2(a)$.
 \end{proof}
  
 \subsection{Facteurs $\gamma$}
 
 Dans cette partie, on prouve l'égalité entre les facteurs $\gamma^{JS}(., \pi, \Lambda^2, \psi)$ et $\gamma^{Sh}(., \pi, \Lambda^2, \psi)$ à une constante (dépendant de $\pi$) de module 1 près.
 
 On commence à montrer cette égalité pour les facteurs $\gamma$ archimédiens. Pour le moment, les résultats connus ne nous donnent même pas l'existence du facteur $\gamma^{JS}(.,\pi, \Lambda^2, \psi)$ dans le cas archimédien, ce sera une conséquence de la méthode de globalisation.
 
 Soit $\pi$ une représentation tempérée irréductible de $GL_{2n}(F)$. On aura besoin d'un résultat sur la continuité du quotient $\frac{J(1-s, \rho(w_{n,n})\widetilde{W}, \widehat{\phi})}{J(s, W, \phi)}$ lorsque l'on fait varier la représentation $\pi$, on dispose du
 \begin{lemme}
 \label{cont}
 Soient $W_0 \in \mathcal{W}(\pi, \psi)$, $\phi \in \mathcal{S}(F^n)$ et $s \in \mathbb{C}$ tel que $0 < Re(s) < 1$. Supposons que $J(s, W_0, \phi) \neq 0$. Alors il existe une application continue  $\pi' \in Temp(GL_{2n}(F)) \mapsto W_{\pi'} \in C^w(N_{2n}(F)\backslash{GL_{2n}(F)}, \psi)$ et un voisinage $V \subset Temp(GL_{2n}(F))$ de $\pi$ qui vérifient que $W_0 = W_\pi$ et $W_{\pi'} \in \mathcal{W}(\pi', \psi)$ pour tout $\pi' \in Temp(GL_{2n}(F))$. De plus, l'application $\pi' \in V \mapsto \frac{J(1-s, \rho(w_{n,n})\widetilde{W}_{\pi'}, \mathcal{F}_{\psi}(\phi))}{J(s, W_{\pi'}, \phi)}$ est bien définie et continue.
 
 En particulier, si $F$ est un corps $p$-adique, ce quotient est égal à $\gamma^{JS}(s, \pi', \Lambda^2, \psi)$ (proposition \ref{funcloc}); donc $\pi' \in V \mapsto \gamma^{JS}(s, \pi', \Lambda^2, \psi)$ est continue.
 \end{lemme}
 
 \begin{proof}
 On utilise l'existence de bonnes sections $\pi' \mapsto W_{\pi'}$ \cite[Corollaire 2.7.1]{beuzart-plessis2}. La forme linéaire $W \in C^w(N_{2n}(F)\backslash{GL_{2n}(F)}, \psi) \mapsto J(s, W, \phi)$ est continue (lemme \ref{convtemp}), il existe donc un voisinage $V$ de $\pi$ tel que $J(s, W_{\pi'}, \phi) \neq 0$ pour tous $\pi' \in V$. Le quotient $\frac{J(1-s, \rho(w_{n,n})\widetilde{W}_{\pi'}, \mathcal{F}_{\psi}(\phi))}{J(s, W_{\pi'}, \phi)}$ est alors bien définie et continue sur $V$. 
 \end{proof}
 
 On étudie maintenant la dépendance du quotient $\frac{J(1-s, \rho(w_{n,n})\widetilde{W}, \mathcal{F}_\psi(\phi))}{J(s, W, \phi)}$ par rapport au caractère additif $\psi$, où l'on note $\mathcal{F}_\psi$ pour la transformée de Fourier par rapport à $\psi$. Les caractères additifs non-triviaux de $F$ sont de la forme $\psi_\lambda$ avec $\lambda \in F^*$ où $\psi_\lambda(x) = \psi(\lambda x)$. On dispose d'un isomorphisme $W \in \mathcal{W}(\pi, \psi) \mapsto W_\lambda \in \mathcal{W}(\pi, \psi_\lambda)$ donné par $W_\lambda(g) = W(a(\lambda)g)$ pour tout $g \in GL_{2n}(F)$, où $a(\lambda) = diag(\lambda^{2n-1}, \lambda^{2n-2}, ..., \lambda, 1)$.
 
 \begin{lemme}
 \label{depcaradd}
 Soient $\lambda \in F^*$, $W \in \mathcal{W}(\pi, \psi)$, $\phi \in \mathcal{S}(F^n)$ et $s \in \mathbb{C}$ tels que $0 < Re(s) < 1$. Supposons que $J(s, W_\lambda, \phi) \neq 0$. Alors
 \begin{equation}
 \frac{J(1-s, \rho(w_{n,n})\widetilde{(W_\lambda)}, \mathcal{F}_{\psi_\lambda}(\phi))}{J(s, W_\lambda, \phi)} =  \omega_\pi(\lambda)^{2n-1}|\lambda|^{n(2n-1)(s-\frac{1}{2})}\frac{J(1-s, \rho(w_{n,n})\widetilde{W_r}, \mathcal{F}_\psi(\phi))}{J(s, W_r, \phi)},
 \end{equation}
 où $W_r$ est la translation à droite de $W$ par $diag(\lambda, 1, \lambda, 1, ...)$. En particulier, $W_r \in \mathcal{W}(\pi, \psi)$.
 
 Lorsque $F$ est un corps $p$-adique, on en déduit que
 \begin{equation}
 \gamma^{JS}(s, \pi, \Lambda^2, \psi_\lambda) = \omega_\pi(\lambda)^{2n-1}|\lambda|^{n(2n-1)(s-\frac{1}{2})} \gamma^{JS}(s, \pi, \Lambda^2, \psi).
 \end{equation}
 \end{lemme}
 
 \begin{proof}
La mesure de Haar auto-duale pour $\psi_\lambda$ est reliée à la mesure de Haar auto-duale pour $\psi$ par un facteur $|\lambda|^{\frac{n}{2}}$. On en déduit que $\mathcal{F}_{\psi_\lambda}(\phi)(x) = |\lambda|^{\frac{n}{2}}\mathcal{F}_\psi(\phi)(\lambda x)$. Le changement de variable $g \mapsto \lambda^{-1} g$ dans l'intégrale définissant $J(1-s, \rho(w_{n,n})\widetilde{W}, \mathcal{F}_\psi(\phi)(\lambda .))$ donne
 \begin{equation}
 \label{depFourier}
 J(1-s, \rho(w_{n,n})\widetilde{W}, \mathcal{F}_{\psi_\lambda}(\phi)) = |\lambda|^{n(s-\frac{1}{2})}\omega_\pi(\lambda)J(1-s, \rho(w_{n,n})\widetilde{W}, \mathcal{F}_\psi(\phi)).
 \end{equation}

D'autre part,
\begin{equation}
J(s, W_\lambda, \phi) = \int_{N_n\backslash{G_n}} \int_{V_n} W\left(a(\lambda) \sigma_n \begin{pmatrix}
1 & X \\
0 & 1
\end{pmatrix}\begin{pmatrix}
g & 0 \\
0 & g
\end{pmatrix} \sigma_n^{-1} \right)dX\widehat{\phi}(e_ng)|\det g|^s dg.
\end{equation}

On décompose $a(\lambda) \sigma_n$ sous la forme $\sigma_n diag(\lambda b(\lambda), b(\lambda))$ où l'on note $b(\lambda) = diag(\lambda^{2n-2}, \lambda^{2n-4}, ..., 1)$. Après les changements de variables, $X \mapsto b(\lambda)^{-1} X b(\lambda)$, $X \mapsto \lambda^{-1}X$ et $g \mapsto b(\lambda)^{-1}g$ , on obtient la relation
\begin{equation}
\label{deplambda1}
J(s, W_\lambda, \phi) = \delta_{B_n}(b(\lambda))^2|\det b(\lambda)|^{-s}|\lambda|^{-dim(V_n)}J(s, \rho(r(\lambda))W, \phi),
\end{equation}
où $\rho(g)$ est la translation à droite par $g$, $r(\lambda) = \sigma_n diag(\lambda, ..., \lambda, 1, ..., 1) \sigma_n^{-1} = diag(\lambda, 1, \lambda, 1, ...)$.

De plus, pour tout $g \in GL_{2n}(F)$, on a
\begin{equation}
\widetilde{(W_\lambda)}(g) = W(a(\lambda)w_n(g^t)^{-1}) = \widetilde{W}(w_n a(\lambda)^{-1}w_n^{-1}g) = \omega_{\pi}(\lambda)^{2n-1}(\widetilde{W})_\lambda(g),
\end{equation}
où l'on a utilisé la relation $w_na(\lambda)^{-1}w_n^{-1} = \lambda^{-(2n-1)}a(\lambda)$. Ce qui donne, en utilisant la relation \ref{deplambda1}, l'égalité
\begin{equation}
\label{deplambda2}
\begin{split}
J(1-s, \rho(w_{n,n})\widetilde{(W_\lambda)}, \mathcal{F}_{\psi_\lambda}(\phi)) &= \omega_\pi(\lambda)^{2n-1} \delta_{B_n}(b(\lambda))^2|\det b(\lambda)|^{s-1}|\lambda|^{-dim(V_n)} \\
&J(1-s, \rho(r(\lambda))\rho(w_{n,n})\widetilde{W}, \mathcal{F}_{\psi_\lambda}(\phi)).
\end{split}
\end{equation}

Pour finir, on remarque que l'on a pour tout $g \in GL_{2n}(F)$,
\begin{equation}
\begin{split}
\rho(r(\lambda))\rho(w_{n,n})\widetilde{W}(g) &= W\left(w_n \left((g r(\lambda) w_{n,n})^t\right)^{-1} \right) = W\left(w_n (g^t)^{-1} r(\lambda)^{-1} w_{n,n} \right) \\
& = \omega_\pi(\lambda)^{-1} W\left(w_{n}\left((gw_{n,n})^t\right)^{-1}r(\lambda) \right) = \omega_\pi(\lambda)^{-1} \rho(w_{n,n})\widetilde{W_r}(g),
\end{split}
\end{equation}
où $W_r$ est la translation à droite de $W$ par $r(\lambda)$, où l'on a utilisé la relation $r(\lambda)^{-1}w_{n,n} = \lambda^{-1} w_{n,n} r(\lambda)$.

On déduit de \ref{deplambda1} et \ref{deplambda2} la relation suivante
\begin{equation}
\frac{J(1-s, \rho(w_{n,n})\widetilde{(W_\lambda)}, \mathcal{F}_{\psi_\lambda}(\phi))}{J(s, W_\lambda, \phi)} = \omega_\pi(\lambda)^{2n-2} |\lambda|^{n(n-1)(2s-1)}\frac{J(1-s, \rho(w_{n,n})\widetilde{W_r}, \mathcal{F}_{\psi_\lambda}(\phi))}{J(s, W_r, \phi)},
\end{equation}
où l'on a utilisé l'égalité $\det b(\lambda) = \lambda^{n(n-1)}$.
On déduit le lemme grâce à la relation \ref{depFourier}.
 \end{proof}
 
 Les facteurs $\gamma$ de Shahidi du carré extérieur vérifient la même dépendance par rapport au caractère additif $\psi$ (voir Henniart \cite{henniart}). Dans la suite, on pourra donc choisir arbitrairement un caractère additif non trivial, les relations seront alors vérifiées pour tous les caractères additifs, en particulier pour le caractère $\psi$ que l'on a fixé.
 
 \begin{proposition}
 \label{proparch}
 Soit $F = \mathbb{R}$ ou $\mathbb{C}$. Soit $\pi$ une représentation tempérée irréductible de $GL_{2n}(F)$.  Les intégrales $J(s, W, \phi)$ admettent un prolongement méromorphe à $\mathbb{C}$ pour tous $W \in \mathcal{W}(\pi, \psi)$ et $\phi \in \mathcal{S}(F^n)$.
 
 Il existe une fonction méromorphe $\gamma^{JS}(s,\pi,\Lambda^2,\psi)$ telle que pour tous $s \in \mathbb{C}$, $W \in \mathcal{W}(\pi, \psi)$ et $\phi \in \mathcal{S}(F^n)$, on ait
 \begin{equation}
 \gamma^{JS}(s, \pi, \Lambda^2, \psi) J(s, W, \phi) = J(1-s, \rho(w_{n,n})\widetilde{W}, \mathcal{F}_\psi(\phi)).
 \end{equation}
 
 De plus, il existe une constante $c^{Sh}(\pi)$ de module 1 telle que pour tout $s \in \mathbb{C}$,
 \begin{equation}
 \gamma^{JS}(s, \pi, \Lambda^2, \psi) = c^{Sh}(\pi)\gamma^{Sh}(s, \pi, \Lambda^2, \psi).
 \end{equation}
 \end{proposition}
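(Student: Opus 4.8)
\emph{Esquisse d'une preuve possible.} Le plan est de proc\'eder par globalisation, en transf\'erant l'\'equation fonctionnelle locale $p$-adique (proposition \ref{funcloc}) et le calcul non ramifi\'e (proposition \ref{calculnr}) vers la place archim\'edienne. Je commencerais par choisir un corps de nombres $k$ ne poss\'edant qu'une seule place archim\'edienne $v_0$, avec $k_{v_0} = F$ (on peut prendre $k = \mathbb{Q}$ si $F = \mathbb{R}$, et $k$ quadratique imaginaire si $F = \mathbb{C}$), ainsi qu'une place auxiliaire non archim\'edienne $v_1$; c'est l'analogue archim\'edien du lemme \ref{corpsglobal}. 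Gr\^ace \`a la proposition \ref{nonzero}, je fixerais des donn\'ees $W^\star \in \mathcal{W}(\pi, \psi)$ et $\phi^\star \in \mathcal{S}(F^n)$ telles que $J(s, W^\star, \phi^\star)$ admette un prolongement m\'eromorphe et ne soit pas identiquement nul. Pour une repr\'esentation automorphe cuspidale $\Pi$ donn\'ee par la proposition \ref{globalisation}, avec $\Pi_{v_0}$ voisine de $\pi$ et $\Pi_v$ non ramifi\'ee hors de $\{v_0, v_1\}$, je factoriserais l'int\'egrale globale. En prenant les donn\'ees locales sph\'eriques aux places non ramifi\'ees et de bonnes donn\'ees en $v_1$, l'\'equation fonctionnelle globale (proposition \ref{funcglob}) et la factorisation $J(s, W_\varphi, \Phi) = \prod_v J(s, W_v, \Phi_v)$ donnent, apr\`es division par les facteurs en $v \neq v_0$ (non nuls d'apr\`es les propositions \ref{calculnr} et \ref{nonzero}),
\[
\frac{J(1-s, \rho(w_{n,n})\widetilde{W}_{v_0}, \widehat{\Phi}_{v_0})}{J(s, W_{v_0}, \Phi_{v_0})} = \prod_{v \neq v_0} \gamma^{JS}(s, \Pi_v, \Lambda^2, \psi_v)^{-1},
\]
o\`u l'on a utilis\'e l'\'equation fonctionnelle $p$-adique (proposition \ref{funcloc}) \`a chaque place finie $v \neq v_0$.

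Le membre de droite est une fonction m\'eromorphe de $s$, ind\'ependante des donn\'ees en $v_0$. J'en tirerais deux cons\'equences. D'une part, comme toute $W \in \mathcal{W}(\pi, \psi)$ et toute $\phi \in \mathcal{S}(F^n)$ se r\'ealisent comme composante en $v_0$ d'une donn\'ee globale (par factorisation du mod\`ele de Whittaker et des fonctions de Schwartz), l'identit\'e $J(s, W_{v_0}, \Phi_{v_0}) = J(s, W_\varphi, \Phi) \big/ \prod_{v \neq v_0} J(s, W_v, \Phi_v)$ fournit le prolongement m\'eromorphe de $J(s, W, \phi)$ pour toutes les donn\'ees, d'o\`u la premi\`ere assertion. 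D'autre part, l'ind\'ependance du membre de droite vis-\`a-vis des donn\'ees en $v_0$ montre que ce quotient ne d\'epend pas du choix de $W_{v_0}, \Phi_{v_0}$; il d\'efinit donc le facteur $\gamma^{JS}(s, \Pi_{v_0}, \Lambda^2, \psi)$, et l'\'equation fonctionnelle en r\'esulte (le cas $J(s, W, \phi) \equiv 0$ se traitant \`a rebours via l'\'equation fonctionnelle globale). La proposition \ref{globalisation} ne produisant que des repr\'esentations dans un voisinage de $\pi$, je passerais de l'ensemble dense des $\Pi_{v_0}$ globalisables \`a $\pi$ elle-m\^eme par continuit\'e du quotient en $\pi'$ (lemme \ref{cont}), qui repose sur la continuit\'e de la forme lin\'eaire $W \mapsto J(s, W, \phi)$ sur les fonctions temp\'er\'ees (lemme \ref{convtemp}); la d\'ependance en le caract\`ere additif est contr\^ol\'ee par le lemme \ref{depcaradd}.

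Pour l'\'egalit\'e avec le facteur de Shahidi, j'utiliserais qu'aux places finies $v \neq v_0$ les facteurs $\gamma^{JS}(s, \Pi_v, \Lambda^2, \psi_v)$ et $\gamma^{Sh}(s, \Pi_v, \Lambda^2, \psi_v)$ co\"incident \`a une constante de module $1$ pr\`es (triviale aux places non ramifi\'ees), ainsi que l'\'equation fonctionnelle globale de la fonction $L$ de Langlands-Shahidi du carr\'e ext\'erieur, qui donne $\prod_v \gamma^{Sh}(s, \Pi_v, \Lambda^2, \psi_v) = 1$, soit $\prod_{v \neq v_0} \gamma^{Sh}(s, \Pi_v, \Lambda^2, \psi_v)^{-1} = \gamma^{Sh}(s, \Pi_{v_0}, \Lambda^2, \psi)$. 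En reportant dans la formule pr\'ec\'edente, j'obtiendrais $\gamma^{JS}(s, \pi, \Lambda^2, \psi) = c^{Sh}(\pi)\, \gamma^{Sh}(s, \pi, \Lambda^2, \psi)$, o\`u $c^{Sh}(\pi)$ est un produit fini des constantes de comparaison $p$-adiques, donc de module $1$ et constant en $s$ (les constantes $p$-adiques ne d\'ependant pas de $s$); l'intrins\`equit\'e du membre de gauche force ce produit \`a ne d\'ependre que de $\pi$. L'\'egalit\'e avec le facteur d'Artin \`a une racine de l'unit\'e pr\`es d\'ecoule alors de Henniart \cite{henniart}.

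Le principal obstacle sera la descente du global vers le local pur en $v_0$ : il faut obtenir une \'egalit\'e valable pour \emph{toutes} les donn\'ees locales et pour la repr\'esentation $\pi$ \emph{exacte}, alors que la globalisation ne fournit qu'une unique \'equation fonctionnelle globale et des repr\'esentations seulement voisines de $\pi$. C'est la conjonction des r\'esultats de non-annulation (proposition \ref{nonzero}), de la factorisation des int\'egrales globales, et surtout de la continuit\'e en $\pi'$ (lemme \ref{cont}, via le lemme \ref{convtemp}) qui rend cette descente possible.
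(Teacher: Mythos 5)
Votre strat\'egie g\'en\'erale est bien celle du texte : corps de nombres \`a une seule place archim\'edienne, globalisation de $\pi$ via la proposition \ref{globalisation}, donn\'ees non ramifi\'ees (proposition \ref{calculnr}) hors d'un ensemble fini de places, non-annulation (proposition \ref{nonzero}) aux places restantes, combinaison de l'\'equation fonctionnelle globale (proposition \ref{funcglob}) et de l'\'equation fonctionnelle $p$-adique (proposition \ref{funcloc}), puis descente de $\Pi_{v_0}$ \`a $\pi$ par continuit\'e (lemmes \ref{cont} et \ref{convtemp}). La premi\`ere assertion (prolongement m\'eromorphe et existence du facteur $\gamma^{JS}$ archim\'edien) s'obtient bien ainsi, le facteur inconnu \`a la place auxiliaire \'etant au moins m\'eromorphe, puisque rationnel en $q_{v_1}^{s}$ d'apr\`es la proposition \ref{funcloc}, divis\'e par un facteur de Shahidi.

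En revanche, la comparaison avec le facteur de Shahidi pr\'esente une circularit\'e. \`A la place auxiliaire $v_1$, la repr\'esentation $\Pi_{v_1}$ est ramifi\'ee et nullement contr\^ol\'ee; or vous invoquez le fait que $\gamma^{JS}$ et $\gamma^{Sh}$ y co\"{\i}ncident \`a une constante de module $1$ pr\`es. Cette affirmation est pr\'ecis\'ement la partie non archim\'edienne du th\'eor\`eme \ref{egalitegamma}, que l'article d\'emontre \emph{apr\`es} la proposition \ref{proparch} et \emph{en l'utilisant} : elle n'est pas disponible \`a ce stade. Le calcul non ramifi\'e (proposition \ref{calculnr}) et le lemme \ref{depcaradd} ne r\`eglent que les places o\`u $\Pi_w$ est non ramifi\'ee (y compris celles o\`u seul $\psi_{\mathbb{A}}$ est ramifi\'e), pas la place $v_1$. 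Il subsiste donc dans votre identit\'e un facteur inconnu $\gamma^{Sh}(s, \Pi_{v_1}, \Lambda^2, \psi_{v_1})\slash\gamma^{JS}(s, \Pi_{v_1}, \Lambda^2, \psi_{v_1})$, fonction m\'eromorphe de $s$ dont rien ne garantit qu'elle soit constante. L'id\'ee cl\'e du texte, absente de votre proposition, consiste \`a neutraliser ce facteur par bi-p\'eriodicit\'e : le quotient
\begin{equation}
R(s) = \frac{J(1-s, \rho(w_{n,n})\widetilde{W}, \mathcal{F}_\psi(\phi))}{J(s, W, \phi)\,\gamma^{Sh}(s, \pi, \Lambda^2, \psi)}
\end{equation}
est une limite de fractions rationnelles en $q_{v_1}^{s}$, donc p\'eriodique de p\'eriode $\frac{2i\pi}{\log q_{v_1}}$; en refaisant toute la construction avec une seconde place auxiliaire de caract\'eristique r\'esiduelle distincte de celle de $v_1$, $R$ acquiert une seconde p\'eriode incommensurable avec la premi\`ere, ce qui force $R$ \`a \^etre constante. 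De m\^eme, le module $1$ de cette constante ne peut pas se d\'eduire des constantes de comparaison $p$-adiques (inconnues ici) : le texte l'obtient par l'\'equation fonctionnelle it\'er\'ee $\gamma^{JS}(s, \pi, \Lambda^2, \psi)\gamma^{JS}(1-s, \widetilde{\pi}, \Lambda^2, \bar{\psi}) = 1$, la relation de conjugaison $\overline{\gamma^{JS}(s, \pi, \Lambda^2, \psi)} = \gamma^{JS}(\bar{s}, \bar{\pi}, \Lambda^2, \bar{\psi})$ et l'unitarit\'e de $\pi$, \'evalu\'ees en $s = \frac{1}{2}$, jointes \`a la propri\'et\'e analogue pour $\gamma^{Sh}$.
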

 
 \begin{proof}
 Soit $k$ un corps de nombres, on suppose que $k$ a une seule place archimédienne, elle est réelle (respectivement complexe) lorsque $F=\mathbb{R}$ (respectivement $F=\mathbb{C}$); par exemple, $k=\mathbb{Q}$ si $F=\mathbb{R}$ et $k=\mathbb{Q}(i)$ si $F=\mathbb{C}$. Soient $v \neq v'$ deux places non archimédiennes de caractéristiques résiduelles distinctes, soit $U \subset Temp(GL_{2n}(F))$ un ouvert contenant $\pi$. On choisit un caractère non trivial $\psi_\mathbb{A}$ de $\mathbb{A}_k/k$.
 
 D'après la proposition \ref{globalisation}, il existe une représentation automorphe cuspidale irréductible $\Pi$ de $GL_{2n}(\mathbb{A}_k)$ telle que $\Pi_{\infty} \in U$ et $\Pi_w$ soit non ramifiée pour toute place non archimédienne $w \neq v$.
 
 On choisit des fonctions $W_w \in \mathcal{W}(\Pi_w, (\psi_\mathbb{A})_w)$ et $\phi_w \in \mathcal{S}(k_w)$ dans le but d'appliquer l'équation fonctionnelle globale. On note $S = \{\infty, v\}$ l'ensemble des places où $\Pi$ est ramifiée et $T$ l'ensemble des places où $\psi_\mathbb{A}$ est ramifié. Pour $w \not\in S \cup T$, on prend les fonctions "non ramifiées" qui apparaissent dans la proposition \ref{calculnr}. Pour $w \in S \cup T$, on fait un choix, d'après la proposition \ref{nonzero}, tel que $J(s, W_w, \phi_w) \neq 0$. On pose alors
 \begin{equation}
 W = \prod_w W_w \quad \text{et} \quad \Phi  = \prod_w \phi_w.
 \end{equation}
 
 D'après les propositions \ref{funcglob} et \ref{calculnr}, on a
 \begin{equation}
 \label{jacquet-shalika}
 \begin{split}
 & \prod_{w \in S \cup T} J(s, W_w, \phi_w) L^{S \cup T}(s, \Pi, \Lambda^2) \\
 &= \prod_{w \in S \cup T} J(1-s, \rho(w_{n,n})\widetilde{W}_w, \mathcal{F}_{(\psi_\mathbb{A})_w}(\phi_w)) L^{S \cup T}(1-s, \widetilde{\Pi}, \Lambda^2),
 \end{split}
 \end{equation}
 où  $L^{S \cup T}(s, \Pi, \Lambda^2) = \prod_{w \not \in S \cup T} L(s, \Pi_w, \Lambda^2)$ est la fonction L partielle. D'autre part, les facteurs $\gamma$ de Shahidi vérifient une relation similaire (voir Henniart \cite{henniart}),
 \begin{equation}
 \label{shahidi}
 L^{S \cup T}(s, \Pi, \Lambda^2) = \prod_{w \in S \cup T} \gamma^{Sh}(s, \Pi_w, \Lambda^2, (\psi_\mathbb{A})_w) L^{S \cup T}(1-s, \widetilde{\Pi}, \Lambda^2).
 \end{equation}
 
 Les équations (\ref{jacquet-shalika}) et (\ref{shahidi}), en utilisant la proposition \ref{funcloc} pour les places $w \in \{v\} \cup T$, donnent
 \begin{equation}
 \label{equationlocalglobal}
 \begin{split}
 & J(1-s, \rho(w_{n,n})\widetilde{W}_\infty, \mathcal{F}_{(\psi_\mathbb{A})_\infty}(\phi_\infty)) = \\
 & J(s, W_\infty, \phi_\infty)\gamma^{Sh}(s, \Pi_\infty, \Lambda^2, (\psi_\mathbb{A})_\infty) \prod_{w \in \{v\} \cup T} \frac{\gamma^{Sh}(s, \Pi_w, \Lambda^2, (\psi_\mathbb{A})_w)}{\gamma^{JS}(s, \Pi_w, \Lambda^2, (\psi_\mathbb{A})_w)}.
 \end{split}
 \end{equation}
 
 Ce qui prouve la première partie de la proposition pour $\Pi_\infty$, l'existence du facteur $\gamma^{JS}(s, \Pi_\infty, \Lambda^2, (\psi_\mathbb{A})_\infty)$ ainsi que le prolongement méromorphe d'après le lemme \ref{convtemp}.
 
 On s'occupe tout de suite du quotient $\frac{\gamma^{Sh}(s, \Pi_w, \Lambda^2, (\psi_\mathbb{A})_w)}{\gamma^{JS}(s, \Pi_w, \Lambda^2, (\psi_\mathbb{A})_w)}$ lorsque $w \in T$. En effet, $\Pi_w$ est non ramifiée, une combinaison de la proposition \ref{calculnr} et du lemme \ref{depcaradd} va nous permettre de calculer ce quotient. Il existe $\lambda \in F^*$ et un caractère non ramifié $\psi_0$ de $F$ tel que $(\psi_\mathbb{A})_w(x) = \psi_0(\lambda x)$. La remarque suivant le lemme \ref{depcaradd} nous dit que les facteurs $\gamma^{JS}(s, \pi, \Lambda^2, \psi)$ et $\gamma^{Sh}(s, \pi, \Lambda^2, \psi)$ ont la même dépendance par rapport au caractère additif. On en déduit que
 \begin{equation}
 \frac{\gamma^{Sh}(s, \Pi_w, \Lambda^2, (\psi_\mathbb{A})_w)}{\gamma^{JS}(s, \Pi_w, \Lambda^2, (\psi_\mathbb{A})_w)} = \frac{\gamma^{Sh}(s, \Pi_w, \Lambda^2, \psi_0)}{\gamma^{JS}(s, \Pi_w, \Lambda^2, \psi_0)} = 1,
 \end{equation}
 d'après la proposition \ref{calculnr} et le calcul non ramifié des facteurs gamma de Shahidi (voir Henniart \cite{henniart}).
 
 L'équation (\ref{equationlocalglobal}) devient alors
 \begin{equation}
 \begin{split}
 & J(1-s, \rho(w_{n,n})\widetilde{W}_\infty, \mathcal{F}_{(\psi_\mathbb{A})_\infty}(\phi_\infty)) = \\
 & J(s, W_\infty, \phi_\infty)\gamma^{Sh}(s, \Pi_\infty, \Lambda^2, (\psi_\mathbb{A})_\infty) \frac{\gamma^{Sh}(s, \Pi_v, \Lambda^2, (\psi_\mathbb{A})_v)}{\gamma^{JS}(s, \Pi_v, \Lambda^2, (\psi_\mathbb{A})_v)}.
 \end{split}
 \end{equation}
 
 On choisit maintenant pour $U$ une base de voisinage contenant $\pi$, en utilisant le lemme \ref{cont} et la continuité des facteurs $\gamma$ de Shahidi sur $Temp(GL_{2n}(F))$, on en déduit que
 \begin{equation}
 R(s) = \frac{J(1-s, \rho(w_{n,n})\widetilde{W}, \mathcal{F}_{(\psi_\mathbb{A})_\infty}(\phi_\infty))}{J(s, W, \phi)\gamma^{Sh}(s ,\pi, \Lambda^2, \psi)},
 \end{equation}
pour tout $W \in \mathcal{W}(\pi, (\psi_\mathbb{A})_\infty)$, qui est à priori bien définie pour $0 < Re(s) < 1$, est une fonction indépendante de $W$ et de $\phi_\infty$. La fonction $R(s)$ ne dépend pas du choix de la base de voisinage et des choix qui sont fait lors de l'utilisation de la proposition \ref{globalisation}.
 De plus, $R$ est une limite de fractions rationnelles en $q_v^s$, donc $R$ est une fonction périodique de période $\frac{2i\pi}{\log q_v}$.
 
  En réutilisant le même raisonnement en une place $v'$ de caractéristique résiduelle distincte de celle de $v$, on voit que $R$ est aussi périodique de période $\frac{2i\pi}{\log q_{v'}}$.
 
  La fonction $R$ est donc une fonction périodique de période $\frac{2i\pi}{\log q_v}$ et $\frac{2i\pi}{\log q_{v'}}$ avec $q_v$ et $q_{v'}$ premier entre eux; ce qui est impossible sauf si $R$ est constante. Ce qui nous permet de voir qu'il existe une constante $c^{Sh}(\pi)=R$ telle que
 \begin{equation}
 \gamma^{JS}(s, \pi, \Lambda^2, (\psi_\mathbb{A})_\infty) = c^{Sh}(\pi)\gamma^{Sh}(s, \pi, \Lambda^2, (\psi_\mathbb{A})_\infty),
 \end{equation}
 où l'on a noté $\gamma^{JS}(s, \pi, \Lambda^2, (\psi_\mathbb{A})_\infty) = R(s) \gamma^{Sh}(s, \pi, \Lambda^2, (\psi_\mathbb{A})_\infty)$.
 
 Il ne nous reste plus qu'à montrer que la constante $c^{Sh}(\pi)$ est de module 1. Reprenons l'équation fonctionnelle locale archimédienne,
 \begin{equation}
 \label{funcarch}
 \gamma^{JS}(s, \pi, \Lambda^2, \psi) J(s, W, \phi) = J(1-s, \rho(w_{n,n})\widetilde{W}, \mathcal{F}_\psi(\phi)).
 \end{equation}
 
 On utilise maintenant l'équation fonctionnelle sur la représentation $\widetilde{\pi}$ pour transformer le facteur $J(1-s, \rho(w_{n,n})\widetilde{W}, \mathcal{F}_\psi(\phi))$, ce qui nous donne
 \begin{equation}
 \gamma^{JS}(s, \pi, \Lambda^2, \psi) J(s, W, \phi) = \frac{J(s, W, \mathcal{F}_{\bar{\psi}}(\mathcal{F}_\psi(\phi)))}{\gamma^{JS}(1-s, \widetilde{\pi}, \Lambda^2, \bar{\psi})}.
 \end{equation}
 
 Puisque $\mathcal{F}_{\bar{\psi}}(\mathcal{F}_\psi(\phi)) = \phi$, on obtient donc la relation 
 \begin{equation}
 \gamma^{JS}(s, \pi, \Lambda^2, \psi)\gamma^{JS}(1-s, \widetilde{\pi}, \Lambda^2, \bar{\psi}) = 1.
 \end{equation}
 
 D'autre part, en conjuguant l'équation \ref{funcarch}, on obtient
 \begin{equation}
 \overline{\gamma^{JS}(s, \pi, \Lambda^2, \psi)} = \gamma^{JS}(\bar{s}, \bar{\pi}, \Lambda^2, \bar{\psi}).
 \end{equation}
 
 Comme $\pi$ est tempérée, $\pi$ est unitaire, donc $\widetilde{\pi} \simeq \bar{\pi}$. On en déduit, pour $s = \frac{1}{2}$,
 \begin{equation}
 |\gamma^{JS}(\frac{1}{2}, \pi, \Lambda^2, \psi)|^2=1.
 \end{equation}
 
 D'autre part, le facteur $\gamma$ de Shahidi vérifie aussi $|\gamma^{Sh}(\frac{1}{2}, \pi, \Lambda^2, \psi)|^2=1$; on en déduit donc que $c^{Sh}(\pi)$ est bien de module 1.
 \end{proof}
 
 \begin{proposition}
 Supposons que $F$ est un corps $p$-adique. Soit $\pi$ une représentation tempérée irréductible de $GL_{2n}(F)$. 
 
 Le facteur $\gamma^{JS}(s,\pi,\Lambda^2,\psi)$ est défini par la proposition \ref{funcloc}. Alors il existe une constante $c^{Sh}(\pi)$ de module 1 telle que pour tout $s \in \mathbb{C}$,
 \begin{equation}
 \gamma^{JS}(s, \pi, \Lambda^2, \psi) = c^{Sh}(\pi)\gamma^{Sh}(s, \pi, \Lambda^2, \psi).
 \end{equation}
 \end{proposition}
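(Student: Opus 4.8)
Le plan est de reproduire la m\'ethode de globalisation employ\'ee dans la preuve de la proposition \ref{proparch}, en prenant cette fois la place $p$-adique $v_0$ de $\pi$ comme place d'ancrage et en utilisant le cas archim\'edien d\'ej\`a \'etabli. D'abord, j'applique le lemme \ref{corpsglobal} pour fixer un corps de nombres $k$ tel que $k_{v_0} = F$, je choisis une place finie auxiliaire $v_1$ de caract\'eristique r\'esiduelle distincte de celle de $v_0$, ainsi qu'un caract\`ere non trivial $\psi_\mathbb{A}$ de $\mathbb{A}_k/k$ de composante $\psi$ en $v_0$. Pour un voisinage $U$ de $\pi$, la proposition \ref{globalisation} fournit alors une repr\'esentation automorphe cuspidale irr\'eductible $\Pi$ de $GL_{2n}(\mathbb{A}_k)$ avec $\Pi_{v_0} \in U$ et $\Pi_w$ non ramifi\'ee en toute place finie $w \notin \{v_0, v_1\}$ (les places archim\'ediennes n'\'etant pas contr\^ol\'ees).

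Ensuite, je choisis gr\^ace \`a la proposition \ref{nonzero} des donn\'ees $W_w \in \mathcal{W}(\Pi_w, (\psi_\mathbb{A})_w)$ et $\phi_w \in \mathcal{S}(k_w^n)$ aux places ramifi\'ees telles que $J(s, W_w, \phi_w) \neq 0$, et les donn\'ees non ramifi\'ees ailleurs. En combinant la d\'ecomposition en produit des int\'egrales globales, l'\'equation fonctionnelle globale (proposition \ref{funcglob}), le calcul non ramifi\'e (proposition \ref{calculnr}) et l'\'equation fonctionnelle locale (proposition \ref{funcloc} aux places finies et proposition \ref{proparch} aux places archim\'ediennes), j'obtiens comme dans (\ref{jacquet-shalika}) l'\'egalit\'e
\begin{equation}
L^{S \cup T}(s, \Pi, \Lambda^2) = L^{S \cup T}(1-s, \widetilde{\Pi}, \Lambda^2) \prod_{w \in S \cup T} \gamma^{JS}(s, \Pi_w, \Lambda^2, (\psi_\mathbb{A})_w),
\end{equation}
o\`u $S$ d\'esigne l'ensemble des places o\`u $\Pi$ est ramifi\'ee et $T$ celui o\`u $\psi_\mathbb{A}$ l'est. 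En divisant par l'\'equation fonctionnelle de Shahidi (\ref{shahidi}) et en posant $R_w(s) = \gamma^{JS}(s, \Pi_w, \Lambda^2, (\psi_\mathbb{A})_w) / \gamma^{Sh}(s, \Pi_w, \Lambda^2, (\psi_\mathbb{A})_w)$, il reste $\prod_{w \in S \cup T} R_w(s) = 1$.

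Il s'agit alors de contr\^oler chaque facteur $R_w$. Aux places de $T$ o\`u $\Pi$ est non ramifi\'ee, on a $R_w = 1$ par le calcul non ramifi\'e et le fait que $\gamma^{JS}$ et $\gamma^{Sh}$ poss\`edent la m\^eme d\'ependance en le caract\`ere additif (lemme \ref{depcaradd} et \cite{henniart}). Aux places archim\'ediennes, $R_w(s)$ est une constante ind\'ependante de $s$ d'apr\`es la proposition \ref{proparch}. Il ne subsiste donc que les deux places $v_0$ et $v_1$, et l'on obtient $R_{v_0}(s) R_{v_1}(s) = C$ pour une constante $C$. Comme les facteurs $\gamma$ aux places finies sont des fractions rationnelles en $q_w^{-s}$, la fonction $R_{v_0}$ est p\'eriodique de p\'eriode $2i\pi / \log q_{v_0}$, et l'\'egalit\'e $R_{v_0} = C R_{v_1}^{-1}$ la rend aussi p\'eriodique de p\'eriode $2i\pi / \log q_{v_1}$. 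Les caract\'eristiques r\'esiduelles de $v_0$ et $v_1$ \'etant distinctes, ces deux p\'eriodes sont incommensurables, ce qui contraint $R_{v_0}$ \`a \^etre constante en $s$.

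Enfin, je passe de $\Pi_{v_0} \in U$ \`a $\pi$ en faisant d\'ecro\^itre $U$ vers $\{\pi\}$ : la continuit\'e de $\gamma^{JS}(s, \cdot)$ fournie par le lemme \ref{cont}, jointe \`a celle de $\gamma^{Sh}(s, \cdot)$, montre que $\gamma^{JS}(s, \pi, \Lambda^2, \psi) / \gamma^{Sh}(s, \pi, \Lambda^2, \psi)$ est limite de fonctions constantes en $s$, donc \'egale \`a une constante $c^{Sh}(\pi)$. Pour voir que $|c^{Sh}(\pi)| = 1$, j'\'evalue en $s = \tfrac{1}{2}$ en reprenant l'argument de la proposition \ref{proparch} : les relations $\gamma^{JS}(s, \pi, \Lambda^2, \psi) \gamma^{JS}(1-s, \widetilde{\pi}, \Lambda^2, \bar{\psi}) = 1$ et $\overline{\gamma^{JS}(s, \pi, \Lambda^2, \psi)} = \gamma^{JS}(\bar{s}, \bar{\pi}, \Lambda^2, \bar{\psi})$, avec $\widetilde{\pi} \simeq \bar{\pi}$ puisque $\pi$ est unitaire, donnent $|\gamma^{JS}(\tfrac{1}{2}, \pi, \Lambda^2, \psi)| = 1$, et de m\^eme $|\gamma^{Sh}(\tfrac{1}{2}, \pi, \Lambda^2, \psi)| = 1$, d'o\`u $|c^{Sh}(\pi)| = 1$. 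Le point le plus d\'elicat reste le contr\^ole des places archim\'ediennes du corps global $k$ : c'est uniquement parce que le cas archim\'edien a d\'ej\`a \'et\'e trait\'e (proposition \ref{proparch}) que leur contribution $R_w$ est une constante en $s$, ce qui permet d'isoler le produit $R_{v_0} R_{v_1}$ et de conclure par l'argument de p\'eriodicit\'e.
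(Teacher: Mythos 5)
Votre preuve suit pour l'essentiel la m\^eme route que celle du papier : globalisation (lemme \ref{corpsglobal} plus proposition \ref{globalisation}), les deux \'equations fonctionnelles globales, le calcul non ramifi\'e et la d\'ependance en le caract\`ere additif aux places de $T$, le cas archim\'edien d\'ej\`a \'etabli (proposition \ref{proparch}), l'argument de p\'eriodicit\'e/rationalit\'e aux deux places finies restantes, la continuit\'e (lemme \ref{cont}) pour passer \`a $\pi$, et l'\'evaluation en $s=\tfrac{1}{2}$ pour le module~1. Les seules diff\'erences sont cosm\'etiques : vous concluez \`a la constance de $R_{v_0}$ pour $\Pi_{v_0}$ puis prenez une limite de constantes, l\`a o\`u le papier fait passer la p\'eriodicit\'e \`a la limite puis utilise la rationalit\'e en $q_{v_0}^{-s}$ pour $\pi$ lui-m\^eme; notez aussi qu'on ne peut pas en g\'en\'eral prescrire exactement $(\psi_\mathbb{A})_{v_0}=\psi$ (les composantes accessibles correspondent \`a $\lambda \in k^\times$ seulement), mais c'est sans cons\'equence puisque le quotient $\gamma^{JS}/\gamma^{Sh}$ est ind\'ependant du caract\`ere additif, point que le papier invoque explicitement apr\`es le lemme \ref{depcaradd}.
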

 
 \begin{proof}
 D'après le lemme \ref{corpsglobal}, il existe un corps de nombres $k$ et une place $v_0$ telle que $k_{v_0} = F$, où $v_0$ est l'unique place de $k$ au dessus de $p$. Soit $v$ une place non archimédienne et de caractéristique résiduelle distincte de celle de $v_0$. Soit $U \subset Temp(GL_{2n}(F))$ un ouvert contenant $\pi$. On choisit un caractère non trivial $\psi_\mathbb{A}$ de $\mathbb{A}_k/k$.
 
 D'après la proposition \ref{globalisation}, il existe une représentation automorphe cuspidale irréductible $\Pi$ telle que $\Pi_{v_0} \in U$ et $\Pi_w$ soit non ramifiée pour toute place non archimédienne $w \neq v,v_0$.
 
On note $S_\infty$ l'ensemble des places archimédienne, $S = S_\infty \cup \{v,v_0\}$ et $T$ l'ensemble des places où $\psi_\mathbb{A}$ est ramifié. Pour $w \in S \cup T$, on choisit d'après la proposition \ref{nonzero}, des fonctions de Whittaker $W_w$ et des fonctions de Schwartz $\phi_w$ telles que $J(s, W_w, \phi_w) \neq 0$. Pour $w \not \in S \cup T$, on choisit les fonctions "non ramifiées" de la proposition \ref{calculnr}. On pose alors
 $$W = \prod_w W_w \quad \text{et} \quad \Phi  = \prod_w \phi_w.$$
 
 D'après l'équation fonctionnelle globale (proposition \ref{funcglob}), on a
 \begin{equation}
 \begin{split}
 &\prod_{w \in S \cup T} J(s, W_w, \phi_w) L^{S \cup T}(s, \Pi, \Lambda^2)\\
 &= \prod_{w \in S \cup T} J(1-s, \rho(w_{n,n})\widetilde{W}_w, \mathcal{F}_{(\psi_\mathbb{A})_w}(\phi_w)) L^{S \cup T}(1-s, \widetilde{\Pi}, \Lambda^2),
 \end{split}
 \end{equation}
  où $L^{S \cup T}(s, \Pi, \Lambda^2)$ est la fonction L partielle. Les facteurs $\gamma$ de Shahidi vérifient (voir Henniart \cite{henniart})
 \begin{equation}
 L^{S \cup T}(s, \Pi, \Lambda^2) = \prod_{w \in S \cup T}\gamma^{Sh}(s, \Pi_w, \Lambda^2, (\psi_\mathbb{A})_w) L^{S \cup T}(1-s, \widetilde{\Pi}, \Lambda^2).
 \end{equation}
 
 On rappelle que lors de la preuve de la proposition précédente, on a démontré que $\frac{\gamma^{Sh}(s, \Pi_w, \Lambda^2, (\psi_\mathbb{A})_w)}{\gamma^{JS}(s, \Pi_w, \Lambda^2, (\psi_\mathbb{A})_w)} = 1$ pour $w \in T$. En utilisant les propositions \ref{funcloc} et \ref{proparch}, on obtient donc la relation
 \begin{equation}
 \prod_{v_\infty \in S_\infty} c^{Sh}(\Pi_{v_\infty}) \frac{\gamma^{JS}(s, \Pi_v, \Lambda^2, (\psi_\mathbb{A})_v)}{\gamma^{Sh}(s, \Pi_v, \Lambda^2, (\psi_\mathbb{A})_v)}\frac{\gamma^{JS}(s, \Pi_{v_0}, \Lambda^2, \psi)}{\gamma^{Sh}(s, \Pi_{v_0}, \Lambda^2, \psi)} = 1.
 \end{equation}
 
 Le reste du raisonnement est maintenant identique à la fin de la preuve de la proposition \ref{proparch}. Par continuité, le quotient $\frac{\gamma^{JS}(s, \pi, \Lambda^2, \psi)}{\gamma^{Sh}(s, \pi, \Lambda^2, \psi)}$ est une fonction périodique de période $\frac{2i\pi}{\log q_v}$. Or c'est une fraction rationnelle en $q_{v_0}^s$, on obtient que c'est une constante. En évaluant $\gamma^{JS}(s, \pi, \Lambda^2, \psi)$ en $s=\frac{1}{2}$, on montre que cette constante est de module $1$.
 \end{proof}

 \section{Limite spectrale}
 
 \label{seclimite}
 Dans cette partie $F$ est un corps $p$-adique, $\psi$ un caractère additif non trivial. On renvoie à la section \ref{mesures} pour la normalisation des mesures sur $Temp(G)$, pour un groupe $G$ réductif connexe sur $F$.

On note $PG_{2n} = G_{2n}(F)/Z_{2n}(F)$ muni la mesure quotient. Soit $f \in \mathcal{S}(PG_{2n})$, pour $\pi \in Temp(PG_{2n})$, on note $\pi(f) : V_\pi \rightarrow V_\pi$, l'opérateur défini par $\pi(f)v = \int_{PG_{2n}} f(g) \pi(g)v dg$ pour tout $v \in V_\pi$ (cette intégrale est en fait une somme finie), où $V_\pi$ est l'espace sous-jacent à la représentation $\pi$. On définit alors $f_\pi$ par
\begin{equation}
f_\pi(g) = Tr(\pi(g)\pi(f^\vee)),
\end{equation}
pour tout $g \in PG_{2n}$, où $f^{\vee}(x) = f(x^{-1})$.

On note $Ad$ la représentation adjointe de $G_{2n}$ sur $M_{2n}$. On pose alors
\begin{equation}
\gamma(s, \pi, \overline{Ad}, \psi) = \frac{\gamma(s, \pi, Ad, \psi)}{\gamma(s, 1, \psi)},
\end{equation}
pour tous $\pi \in Temp(PG_{2n})$ et $s \in \mathbb{C}$.

\begin{proposition}[Harish-Chandra \cite{waldspurger}, Shahidi \cite{shahidi}, Silberger-Zink \cite{silberger-zink}]
\label{propPlanch}
Il existe une unique mesure $\mu_{PG_{2n}}$ sur $Temp(PG_{2n})$ telle que
\begin{equation}
f(g) = \int_{Temp(PG_{2n})} f_{\pi}(g) d\mu_{PG_{2n}}(\pi),
\end{equation} 
pour tous $f \in \mathcal{S}(PG_{2n})$ et $g \in PG_{2n}$. De plus, on a l'égalité de mesure suivante :
\begin{equation}
\label{mesurePlanch}
d\mu_{PG_{2n}}(\pi) = \frac{\gamma^*(0, \pi, \overline{Ad}, \psi)}{|S_\pi|} d\pi,
\end{equation}
où $\gamma^*(0, \pi, \overline{Ad}, \psi) = \lim_{s \rightarrow 0} (s log(q_F))^{-n_{\pi, \overline{Ad}}} \gamma(s, \pi, \overline{Ad}, \psi)$, avec $n_{\pi, \overline{Ad}}$ l'ordre du zéro de $\gamma(s, \pi, \overline{Ad}, \psi)$ en $s=0$. Pour $\pi \in Temp(PG_{2n})$ isomorphe à $\pi_1 \times ... \times \pi_k$, avec $\pi_i \in \Pi_{2}(G_{n_i})$, le facteur $|S_{\pi}|$ est le produit $\prod_{i=1}^k n_i$.
\end{proposition}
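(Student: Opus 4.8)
La proposition \'etant un assemblage de r\'esultats connus, le plan est de partir de la formule de Plancherel de Harish-Chandra puis d'expliciter la densit\'e de Plancherel \`a l'aide de facteurs $\gamma$. L'existence et l'unicit\'e de la mesure $\mu_{PG_{2n}}$ v\'erifiant la formule d'inversion $f(g) = \int f_\pi(g) d\mu_{PG_{2n}}(\pi)$ rel\`event directement du th\'eor\`eme de Plancherel de Harish-Chandra \cite{waldspurger}, et il n'y a rien \`a ajouter pour cette partie. Il reste donc \`a identifier cette mesure, ce qui se ram\`ene au calcul de la densit\'e le long d'une composante $\{i_P^G(\sigma \otimes \chi) : \chi \in \widehat{A_M}\}$, o\`u $M = G_{n_1} \times \cdots \times G_{n_k}$ est un Levi de $PG_{2n}$ et $\sigma = \pi_1 \boxtimes \cdots \boxtimes \pi_k$ avec $\pi_i \in \Pi_2(G_{n_i})$. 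Sous la forme de Waldspurger, cette densit\'e s'\'ecrit $d(\sigma) j(\sigma)^{-1} d\sigma$, o\`u $d(\sigma)$ est le degr\'e formel et $j(\sigma)$ le produit des facteurs $\mu$ de Harish-Chandra provenant des op\'erateurs d'entrelacement. L'objectif est de montrer que ce produit co\"incide, apr\`es les normalisations de la section \ref{mesures}, avec $\gamma^*(0, \pi, \overline{Ad}, \psi)/|S_\pi|$.

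L'ingr\'edient structurel central est la factorisation du facteur $\gamma$ adjoint sous l'induction parabolique. En d\'ecomposant $M_{2n} = \mathfrak{gl}_{2n} = \bigoplus_{i,j} \Hom(F^{n_i}, F^{n_j})$ en blocs diagonaux et hors-diagonaux, la repr\'esentation $Ad$ restreinte au Levi dual se scinde en la somme de l'adjointe de $M$ (blocs $i=j$) et des repr\'esentations de Rankin-Selberg associ\'ees aux paires $(\pi_i, \pi_j^\vee)$ (blocs $i \neq j$). On obtient donc
\begin{equation}
\gamma(s, \pi, Ad, \psi) = \prod_{i=1}^k \gamma(s, \pi_i, Ad, \psi) \prod_{i \neq j} \gamma(s, \pi_i \times \pi_j^\vee, \psi),
\end{equation}
o\`u le dernier produit d\'esigne les facteurs $\gamma$ de Rankin-Selberg, et l'on retire le facteur central $\gamma(s,1,\psi)$ pour passer \`a $\overline{Ad}$. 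C'est cette d\'ecomposition qui r\'epartit le facteur $\gamma$ adjoint du groupe total entre la contribution des degr\'es formels (blocs diagonaux) et celle des op\'erateurs d'entrelacement (blocs hors-diagonaux).

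Il s'agit ensuite d'identifier chaque morceau. Pour les blocs diagonaux, le th\'eor\`eme du degr\'e formel pour $G_{n_i}$, calcul\'e par Silberger-Zink \cite{silberger-zink} conform\'ement \`a la conjecture du degr\'e formel, donne $d(\pi_i)$ proportionnel \`a $|\gamma^*(0, \pi_i, Ad, \psi)|$, la constante de proportionnalit\'e et le facteur $n_i = |S_{\pi_i}|$ \'etant contr\^ol\'es par la structure de la s\'erie discr\`ete de $G_{n_i}$. Pour les blocs hors-diagonaux, la th\'eorie de Shahidi \cite{shahidi} exprime le facteur $\mu$ de Harish-Chandra, donc $j(\sigma)$, comme un produit de facteurs $\gamma$ de Rankin-Selberg $\gamma(s, \pi_i \times \pi_j^\vee, \psi)$ \'evalu\'es le long de la composante $\chi$; leur valeur r\'egularis\'ee en $s=0$ fournit exactement $\prod_{i \neq j} \gamma(0, \pi_i \times \pi_j^\vee, \psi)$. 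L'ordre $n_{\pi, \overline{Ad}}$ du z\'ero en $s=0$ du facteur $\gamma$ adjoint s'interpr\`ete alors comme le nombre de blocs hors-diagonaux contribuant un z\'ero, ce qui justifie le passage \`a la r\'egularisation $\gamma^*$.

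Il reste \`a recombiner et \`a traiter la combinatoire. Le facteur $|W(G,M)|^{-1}$ ins\'er\'e dans la d\'efinition de $d\pi$ (section \ref{mesures}), le groupe $W(G,\sigma)$ et la trivialit\'e des $R$-groupes pour $GL$ assurent que la somme sur les composantes se recolle en une mesure bien d\'efinie sur $Temp(PG_{2n})$, et le facteur $|S_\pi| = \prod_i n_i$ \'emerge du produit des $|S_{\pi_i}|$ apparaissant dans les degr\'es formels. La difficult\'e principale est d'ordre comptable : il faut suivre pr\'ecis\'ement toutes les normalisations — la mesure de Haar autoduale pour $\psi$, les facteurs $\gamma^*(0,1,\psi)^{-\dim A_M}$ intervenant dans $d\chi$, ainsi que la d\'ependance en $\psi$ — de mani\`ere \`a ce que les facteurs $\gamma$ des blocs diagonaux et hors-diagonaux se r\'eassemblent exactement en $\gamma^*(0, \pi, \overline{Ad}, \psi)$ sans constante parasite, et que la r\'egularisation en $s=0$ s'accorde avec l'ordre $n_{\pi, \overline{Ad}}$. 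C'est pr\'ecis\'ement pour absorber ces constantes que la mesure $d\pi$ a \'et\'e normalis\'ee comme dans la section \ref{mesures}, et la v\'erification que ce choix rend la relation \ref{mesurePlanch} exacte constitue l'essentiel de la preuve.
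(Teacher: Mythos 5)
Your proposal follows essentially the same route as the paper: both rest on the Harish-Chandra/Waldspurger form $d(\sigma)j(\sigma)^{-1}d\sigma$ of the Plancherel density, identify the formal degree via Silberger-Zink and the intertwining factor $j(\sigma)$ via Shahidi, and recombine these into the regularized adjoint $\gamma$-factor with the combinatorial constant $|S_\pi|$. The only real difference is organizational: the paper makes the passage from $G_{2n}$ to $PG_{2n}$ explicit (Fourier inversion on $Z_{2n}$, giving $\mu_{PG_{2n}}(\pi)=\gamma^*(0,1,\psi)^{-1}\mu_{G_{2n}}(\pi)$, which is where the normalization $\overline{Ad}$ comes from) and delegates exactly the block-diagonal/off-diagonal bookkeeping you sketch in your last two paragraphs to \cite[Proposition 2.13.2]{beuzart-plessis}, rather than redoing it.
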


\begin{proof}
La mesure de Plancherel d'un groupe réductif p-adique $G$ est de la forme $d\mu_G(\pi) = \mu_G(\pi) d\pi$. Rappelons que la densité de Plancherel $\mu_G$ a été calculé par Waldspurger et Harish-Chandra \cite{waldspurger} sous la forme
\begin{equation}
\mu_G(\pi) = d(\sigma)j(\sigma)^{-1},
\end{equation}
où $\pi = i_P^G(\sigma)$ avec $P=MU$ un sous-groupe parabolique de $G$ et $\sigma \in \Pi_2(M)$, $d(\sigma)$ est le degré formel de $\sigma$ et $j(\sigma)$ est un scalaire produit d'opérateurs d'entralecements.

Par désintégration de la mesure sur $Temp(G_{2n})$ et inversion de Fourier sur $Z_{2n}$, on a $\mu_{PG_{2n}}(\pi) = \gamma^*(0,1, \psi)^{-1}\mu_{G_{2n}}(\pi)$ pour tout $\pi \in Temp(PG_{2n})$.
Pour $G = G_{2n}$, le degré formel $d(\sigma)$ a été calculé par Silberger-Zink \cite{silberger-zink} et le facteur $j(\sigma)^{-1}$ par Shahidi \cite{shahidi}. On renvoie à \cite[Proposition 2.13.2]{beuzart-plessis} pour le calcul du produit qui donne la relation \ref{mesurePlanch}.
\end{proof}

On note $\Phi(G)$ l'ensemble des paramètres de Langlands tempérés de $G$ et $Temp(G)/Stab$ le quotient de $Temp(G)$ par la relation d'équivalence $\pi \equiv \pi' \iff \varphi_\pi = \varphi_{\pi'}$, où $\varphi_\pi$ est le paramètre de Langlands associé à $\pi$.

Rappelons (section \ref{notations}) que la correspondance de Langlands locale pour $SO(2m+1)$ nous permet de définir une application de transfert $T : Temp(SO(2m+1))/Stab \rightarrow Temp(G_{2m})$. On sait caractériser l'image de l'application de transfert. Plus exactement,
\begin{equation}
\label{caracTransf}
\pi \in T(Temp(SO(2n+1))/Stab) \iff \pi = \left( \bigtimes_{i=1}^k \tau_i \times \widetilde{\tau_i} \right) \times \bigtimes_{j=1}^l \mu_i
\end{equation}
avec $\tau_i \in \Pi_2(G_{n_i})$ et $\mu_j \in T(Temp(SO(2m_j+1))/Stab) \cap \Pi_2(G_{2m_j})$ (de manière équivalente $\mu_j \in \Pi_2(G_{2m_j})$ et $\gamma(0, \mu_j, \Lambda^2, \psi) = 0$).

\begin{proposition}
\label{limitespectrale}
Soit $\phi$ une fonction régulière à support compact sur $Temp(PG_{2n})$, on a 

\begin{equation}
\begin{split}
& \lim_{s \rightarrow 0^+}  n \gamma(s, 1, \psi) \int_{Temp(PG_{2n})} \phi(\pi) \gamma(s, \pi, \Lambda^2, \psi)^{-1} d\mu_{PG_{2n}}(\pi) = \\
& \int_{Temp(SO_{2n+1}) / Stab} \phi(T(\sigma)) \frac{\gamma^*(0, \sigma, Ad, \psi)}{|S_\sigma|} d\sigma.
 \end{split}
\end{equation}

où pour $\sigma \in Temp(SO(2n+1))$ sous-représentation de $\pi_1 \times ... \times \pi_l \rtimes \sigma_0$, avec $\pi_i \in \Pi_{2}(G_{n_i})$ et $\sigma_0 \in \Pi_2(SO(2m+1))$, le facteur $|S_{\pi}|$ est le produit $|S_{\pi_1}|...|S_{\pi_l}||S_{\sigma_0}|$; où $|S_{\sigma_0}|=2^k$ tel que $T(\sigma_0) \simeq \tau_1 \times ... \times \tau_k$ avec $\tau_i \in \Pi_2(G_{m_i})$.
\end{proposition}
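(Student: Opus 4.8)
Le plan est de ramener cette limite \`a un calcul local sur chaque composante spectrale de $Temp(PG_{2n})$, puis d'en extraire un r\'esidu concentrant l'int\'egrale sur le lieu du transfert. On utilise d'abord la proposition \ref{propPlanch} pour remplacer $d\mu_{PG_{2n}}(\pi)$ par $\frac{\gamma^*(0, \pi, \overline{Ad}, \psi)}{|S_\pi|}d\pi$ et on d\'ecompose $Temp(PG_{2n})$ en composantes index\'ees par un couple $(M, \delta)$ form\'e d'un sous-groupe de Levi et d'une s\'erie discr\`ete, modulo $W(G,M)$. Sur une telle composante on \'ecrit $\pi = i_M^G(\delta \otimes \lambda)$, o\`u $\lambda$ parcourt les torsions non ramifi\'ees, et on exprime $\gamma(s, \pi, \Lambda^2, \psi)$ gr\^ace \`a la structure inductive du facteur gamma du carr\'e ext\'erieur, le carr\'e ext\'erieur d'une somme directe se scindant en carr\'es ext\'erieurs des facteurs et produits de Rankin-Selberg des paires.

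L'ingr\'edient local d\'ecisif est une identit\'e de branchement. Lorsque $\pi = T(\sigma)$, le param\`etre de $\pi$ se factorise par $Sp_{2n}(\mathbb{C}) \subset GL_{2n}(\mathbb{C})$ et la repr\'esentation standard devient autoduale ; on a donc $Ad_{GL_{2n}}|_{Sp_{2n}} = Sym^2 \oplus \Lambda^2$, o\`u $Sym^2$ est la repr\'esentation adjointe $\mathfrak{sp}_{2n} = Ad_{SO(2n+1)}$ et $\Lambda^2$ le carr\'e ext\'erieur standard. On en d\'eduit l'\'egalit\'e
\begin{equation}
\gamma(s, \pi, Ad, \psi) = \gamma(s, \sigma, Ad, \psi)\,\gamma(s, \pi, \Lambda^2, \psi),
\end{equation}
puis, en utilisant $\gamma(s, \pi, \overline{Ad}, \psi) = \gamma(s, \pi, Ad, \psi)/\gamma(s, 1, \psi)$, la relation cl\'e
\begin{equation}
\gamma(s, 1, \psi)\,\gamma(s, \pi, \Lambda^2, \psi)^{-1}\,\gamma(s, \pi, \overline{Ad}, \psi) = \gamma(s, \sigma, Ad, \psi).
\end{equation}
C'est elle qui fera appara\^{\i}tre, apr\`es r\'egularisation, le facteur $\gamma^*(0, \sigma, Ad, \psi)$ du membre de droite.

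On analyse ensuite les p\^oles. Pour $s > 0$ r\'eel l'int\'egrande est r\'eguli\`ere, mais lorsque $s \to 0^+$ le facteur $\gamma(s, \pi, \Lambda^2, \psi)^{-1}$ d\'eveloppe des p\^oles le long du lieu o\`u $\pi$ entre dans l'image du transfert, d\'ecrit par \ref{caracTransf} : ce sont les hyperplans o\`u une paire $\tau|\cdot|^{z} \times \widetilde{\tau}|\cdot|^{-z}$ s'aligne (quand $z \to 0$) ou o\`u un facteur symplectique est au bon point. On v\'erifie que l'ordre du p\^ole de $\gamma(s, 1, \psi)\gamma(s, \pi, \Lambda^2, \psi)^{-1}$ co\"{\i}ncide exactement avec la codimension du lieu du transfert dans la composante ; les composantes ne le rencontrant pas contribuent $0$ \`a la limite, car $\gamma(s, 1, \psi) \to 0$ tandis que $\gamma(s, \pi, \Lambda^2, \psi)^{-1}$ y reste born\'e. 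Sur une composante contribuante, un calcul de r\'esidu transverse (de type Sokhotski-Plemelj, apr\`es le changement d'\'echelle $\lambda = s\,u$ des param\`etres transverses) montre que $n\gamma(s, 1, \psi)\gamma(s, \pi, \Lambda^2, \psi)^{-1}$ se comporte comme une masse de Dirac port\'ee par le lieu du transfert, de sorte que la limite devient une int\'egrale sur la composante correspondante de $Temp(SO(2n+1))/Stab$, le jacobien du passage des param\`etres de $GL_{2n}$ \`a ceux de $SO(2n+1)$ ainsi que le comptage des points de r\'esidu produisant le quotient $|S_\sigma|^{-1}$.

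La principale difficult\'e sera la comptabilit\'e des constantes et des ordres de p\^oles. Il faudra v\'erifier que la densit\'e de Plancherel $\gamma^*(0, \pi, \overline{Ad}, \psi)$, le facteur r\'egularisant $\gamma(s, 1, \psi)$, le jacobien et les cardinaux $|S_\pi|$, $|S_\sigma|$ se combinent pr\'ecis\'ement en $\gamma^*(0, \sigma, Ad, \psi)/|S_\sigma|$, en tenant compte du fait que l'ordre du z\'ero en $s = 0$ du facteur adjoint saute au passage du lieu du transfert, saut que la r\'egularisation $\gamma^*$ doit absorber. L'autre point d\'elicat sera de justifier l'\'echange de la limite et de l'int\'egrale, ce qui r\'eclame un contr\^ole uniforme de $\gamma(s, \pi, \Lambda^2, \psi)^{-1}$ au voisinage de ses p\^oles mobiles ; on l'obtiendra par des estim\'ees explicites sur les facteurs de Rankin-Selberg non ramifi\'es le long de chaque composante.
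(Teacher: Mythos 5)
Votre plan reproduit, dans ses grandes lignes, la d\'emarche de l'article : r\'e\'ecriture de $d\mu_{PG_{2n}}$ via la proposition \ref{propPlanch}, localisation sur une composante attach\'ee \`a un couple $(M,\delta)$, d\'ecomposition des facteurs $\gamma$ en fractions rationnelles des param\`etres non ramifi\'es, identit\'e de branchement $\gamma(s,T(\sigma),Ad,\psi)=\gamma(s,\sigma,Ad,\psi)\,\gamma(s,T(\sigma),\Lambda^2,\psi)$, et concentration de la limite sur le lieu du transfert d\'ecrit par \ref{caracTransf}. La diff\'erence n'est donc pas de strat\'egie mais de substance : toute la difficult\'e de la proposition est concentr\'ee dans l'\'etape que vous ne faites qu'esquisser, \`a savoir le calcul de r\'esidu transverse cens\'e produire une masse de Dirac port\'ee par le lieu du transfert. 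Ce n'est pas un calcul de routine : les z\'eros provenant de $\gamma^*(0,\pi_\lambda,\overline{Ad},\psi)$ sont port\'es par les hyperplans $x_{i,l}(\lambda)=x_{i,l'}(\lambda)$ (et leurs analogues), tandis que les p\^oles de $\gamma(s,\pi_\lambda,\Lambda^2,\psi)^{-1}$ sont port\'es par les hyperplans $x_{i,l}(\lambda)+\widetilde{x_{i,l'}}(\lambda)+d_i s=0$, $y_{j,l}(\lambda)+y_{j,l'}(\lambda)+e_j s=0$, etc. ; ces deux familles couplent les coordonn\'ees entre elles, de sorte que l'int\'egrale ne se factorise pas en probl\`emes \`a une variable et qu'un argument de type Sokhotski--Plemelj appliqu\'e facteur par facteur ne suffit pas. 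L'article lui-m\^eme ne d\'emontre d'ailleurs pas ce point directement : il invoque la proposition 3.3.1 de \cite{beuzart-plessis} (proposition \ref{beuzart-plessis} du texte), r\'esultat substantiel \'etabli pr\'ecis\'ement pour ce type de limites, compl\'et\'e par un argument de Banach--Steinhaus pour autoriser la d\'ependance en $s$ de la fonction auxiliaire $\varphi_s(\lambda)=\varphi(\lambda)F(\lambda)G(2\lambda+s)$, point que votre plan n'aborde pas. Sans un substitut d\'emontr\'e de ce lemme de concentration (avec le contr\^ole des constantes $D$, $2^{-c}$, des puissances de $2\pi$ et de $\log q_F$, et l'identification $|S_\sigma|=2^c P/D$), votre texte reste un programme plut\^ot qu'une preuve.

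De plus, l'un de vos arguments interm\'ediaires est faux : pour \'ecarter les composantes ne rencontrant pas l'image du transfert, vous affirmez que $\gamma(s,\pi,\Lambda^2,\psi)^{-1}$ y reste born\'e lorsque $s\to 0^+$. C'est inexact d\`es que la composante contient une paire partielle $(\tau,\widetilde{\tau})$, par exemple $\tau^{\times 2}\times\widetilde{\tau}\times\cdots$ avec $m_i=2$ et $n_i=1$ : les hyperplans polaires $x_{i,l}(\lambda)+\widetilde{x_{i,l'}}(\lambda)+d_i s=0$ rencontrent le domaine d'int\'egration \`a la limite, l'int\'egrande n'y est pas born\'ee et l'int\'egrale elle-m\^eme peut cro\^itre lorsque $s\to 0^+$. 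La nullit\'e de la limite, apr\`es multiplication par $s$, dans ce cas d\'eg\'en\'er\'e est pr\'ecis\'ement l'une des deux assertions de la proposition \ref{beuzart-plessis}, et non la cons\'equence d'une majoration triviale de l'int\'egrande.
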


\begin{proof}
D'après la relation \ref{mesurePlanch}, on a
\begin{equation}
\label{int123}
\int_{Temp(PG_{2n})} \phi(\pi) \gamma(s, \pi, \Lambda^2, \psi)^{-1} d\mu_{PG_{2n}}(\pi) = 
\int_{Temp(PG_{2n})} \phi(\pi) \frac{\gamma^*(0, \pi, \overline{Ad}, \psi)}{|S_\pi|\gamma(s, \pi, \Lambda^2, \psi)} d\pi.
\end{equation}

Soit $\pi \in Temp(PG_{2n})$. En prenant des partitions de l'unité, on peut supposer que $\phi$ est à support dans un voisinage $U$ suffisamment petit de $\pi$. On écrit la représentation $\pi$ sous la forme
\begin{equation}
\pi = \left( \bigtimes_{i=1}^t \tau_i^{\times m_i} \times \widetilde{\tau_i}^{\times n_i} \right) \times \left( \bigtimes_{j=1}^u \mu_j^{\times p_j} \right) \times \left( \bigtimes_{k=1}^v \nu_k^{\times q_k}\right),
\end{equation}
où
\begin{itemize}
\item $\tau_i \in \Pi_2(G_{d_i})$ vérifie $\tau_i \not \simeq \widetilde{\tau_i}$ pour tout $1\leq i \leq t$. De plus, pour tous $1 \leq i < i' \leq t$, $\tau_i \not \simeq \tau_{i'}$ et $\tau_i \not \simeq \widetilde{\tau_{i'}}$.
\item $\mu_j \in \Pi_2(G_{e_j})$ vérifie $\mu_j \simeq \widetilde{\mu_j}$ et $\gamma(0, \mu_j, \Lambda^2, \psi) \neq 0$ pour tout $1\leq j \leq u$. De plus, pour tous $1 \leq j < j' \leq u$, $\mu_j \not \simeq \mu_{j'}$.
\item $\nu_k \in \Pi_2(G_{f_k})$ vérifie $\gamma(0, \nu_k, \Lambda^2, \psi) = 0$ ( et donc $\nu_k \simeq \widetilde{\nu_k}$ ) pour tout $1\leq k \leq v$. De plus, pour tous $1 \leq k < k' \leq v$, $\nu_k \not \simeq \nu_{k'}$.
\end{itemize}

On note $M = \left( \prod_{i=1}^t G_{d_i}^{m_i+n_i} \times \prod_{j=1}^u G_{e_j}^{p_j} \times \prod_{k=1}^v G_{f_k}^{q_k} \right) / Z_{2n}$ et $P$ un parabolique de $PG_{2n}$ de Levi $M$. Alors $\pi = i_P^{PG_{2n}}(\tau)$ pour une certaine représentation $\tau$ de $M$.

On note $X^*(M)$ le groupe des caractères algébriques de $M$. On note $\mathcal{A} \subset \prod_{i=1}^t (i\mathbb{R})^{m_i+n_i} \times \prod_{j=1}^u (i\mathbb{R})^{p_j} \times \prod_{k=1}^v (i\mathbb{R})^{q_k} = (i\mathbb{R})_M$ l'hyperplan défini par la condition que la somme des coordonnées est nulle. La forme linéaire "somme des coordonnées" induit un isomorphisme $(i\mathbb{R})_M/\mathcal{A} \simeq i\mathbb{R}$.

On équipe $(i\mathbb{R})_M$ du produit des mesures de Lebesgue sur $i\mathbb{R}$ et $\mathcal{A}$ de la mesure de Haar telle que la mesure quotient sur $(i\mathbb{R})_M/\mathcal{A} \simeq i\mathbb{R}$ (l'isomorphisme étant induit par la "somme des coordonnées") soit la mesure de Lebesgue.

Dans la suite, on notera les coordonnées de $\lambda \in \mathcal{A}$ de la manière suivante :
\begin{itemize}
\item $x_i(\lambda) = (x_{i,1}(\lambda), ..., x_{i, m_i}(\lambda), \widetilde{x_{i, 1}}(\lambda), ..., \widetilde{x_{i,n_i}}(\lambda)) \in (i\mathbb{R})^{m_i} \times (i\mathbb{R})^{n_i}$,
\item $y_j(\lambda) = (y_{j,1}(\lambda), ..., y_{j, p_j}(\lambda)) \in (i\mathbb{R})^{p_j}$,
\item $z_k(\lambda) = (z_{k,1}(\lambda), ..., z_{k, q_k}(\lambda)) \in (i\mathbb{R})^{q_k}$,
\end{itemize}
pour tout $\lambda \in \mathcal{A}$.

Pour $\lambda = \chi \otimes \mu \in i\mathcal{A}^*_M$, on note $\tau_\lambda = \tau \otimes |\chi|^\mu$. On rappelle que $W(PG_{2n}, \tau)$ est le sous-groupe de $W(PG_{2n}, M)$ fixant la représentation $\tau$. Soit $V_M$ un voisinage ouvert de $0$ suffisamment petit $W(PG_{2n},\tau)$-invariant dans $i\mathcal{A}^*_M$ telle que l'application $\lambda \in i\mathcal{A}^*_M \mapsto \pi_\lambda = i_{P}^{PG_{2n}}(\tau_\lambda) \in Temp(PG_{2n})$ induit un isomorphisme topologique entre $V_M \slash W(PG_{2n}, \tau)$ et un voisinage ouvert $U$ de $\pi$ dans $Temp(PG_{2n})$. On équipe $i\mathcal{A}^*_M = X^*(M) \otimes i\mathbb{R}$ de l'unique mesure de Haar qui vérifie la formule d'intégration suivante
\begin{equation}
\label{formuleIntegration}
\int_U f(\pi) d\pi = \frac{1}{W(PG_{2n}, \tau)} \int_{V_M} f(\pi_\lambda) d\lambda,
\end{equation}
pour toute fonction $f$ localement constante à support compact sur $Temp(PG_{2n})$.

Il existe un isomorphisme d'espace vectoriel $\mathcal{A} \simeq i\mathcal{A}^*_M$ tel que lorsqu'on le compose avec l'application $\lambda \in i\mathcal{A}^*_M \mapsto \pi_\lambda\in Temp(PG_{2n})$, on obtient l'application $\lambda \in \mathcal{A} \mapsto \pi_\lambda \in Temp(PG_{2n})$ où
\begin{equation}
\begin{split}
\pi_{\lambda} = &\left( \bigtimes_{i=1}^t \left( \bigtimes_{l=1}^{m_i} \tau_i \otimes |\det|^{\frac{x_{i,l}(\lambda)}{d_i}} \right) \times \left( \bigtimes_{l=1}^{n_i} \widetilde{\tau_i} \otimes |\det|^{\frac{\widetilde{x_{i,l}}(\lambda)}{d_i}} \right) \right) \\
& \times \left( \bigtimes_{j=1}^u \bigtimes_{l=1}^{p_j} \mu_j\otimes |\det|^{\frac{y_{j,l}(\lambda)}{e_j}} \right) \times \left( \bigtimes_{k=1}^v \bigtimes_{l=1}^{q_k} \nu_k \otimes |\det|^{\frac{z_{k,l}(\lambda)}{f_k}} \right).
\end{split}
\end{equation}

Quitte à restreindre $U$, cette dernière induit un homéomorphisme $U \simeq V/W(PG_{2n}, \tau)$, où $V$ est un voisinage de 0 dans $\mathcal{A}$.

L'isomorphisme $\mathcal{A} \simeq i\mathcal{A}^*_M$ envoie la mesure de Haar de $\mathcal{A}$ sur $\left(\frac{\log(q_F)}{2\pi}\right)^{\dim(A_M)}$ fois la mesure de Haar sur $i\mathcal{A}^*_M$. On en déduit, grâce à \ref{formuleIntegration}, que l'intégrale \ref{int123} est égale à
\begin{equation}
\frac{1}{|W(PG_{2n}, \tau)|} \left(\frac{log(q_F)}{2\pi}\right)^{dim(\mathcal{A})}\int_{V} \phi(\pi_\lambda) \frac{\gamma^*(0, \pi_\lambda, \overline{Ad}, \psi)}{|S_{\pi_\lambda}|\gamma(s, \pi_\lambda, \Lambda^2, \psi)} d\lambda,
\end{equation}
où $dim(\mathcal{A}) = dim(A_M) = \left(\sum_{i=1}^t m_i + n_i + \sum_{j=1}^u p_j + \sum_{k=1}^v q_k\right) - 1$. De plus, on a
\begin{equation}
|S_{\pi_\lambda}| = \prod_{i=1}^t d_i^{m_i+n_i} \prod_{j=1}^u e_j^{p_j} \prod_{k=1}^v f_k^{q_k}.
\end{equation}
On notera ce produit $P$ dans la suite. 

On en déduit l'égalité suivante :
\begin{equation}
\label{defvarphi}
\begin{split}
\int_{Temp(PG_{2n})} &\phi(\pi) \gamma(s, \pi, \Lambda^2, \psi)^{-1} d\mu_{PG_{2n}}(\pi) = \\
& \frac{1}{|W(PG_{2n}, \tau)|P} \left(\frac{log(q_F)}{2\pi}\right)^{dim(\mathcal{A})} 
\int_{\mathcal{A}} \varphi(\lambda) \frac{\gamma^*(0, \pi_\lambda, \overline{Ad}, \psi)}{\gamma(s, \pi_\lambda, \Lambda^2, \psi)} d\lambda,
\end{split}
\end{equation}
où $\varphi(\lambda) = \phi(\pi_\lambda)$ si $\lambda \in V$ et $0$ sinon. La fonction $\varphi$ est $W(PG_{2n}, \tau)$-équivariante à support compact.

Décrivons maintenant la forme des facteurs $\gamma$, on aura besoin des propriétés de ces derniers.
\begin{propriete}
Les facteurs $\gamma$ vérifient les propriétés suivantes :
\begin{itemize}
\item $\gamma(s, \pi_1 \times \pi_2, Ad, \psi) = \gamma(s, \pi_1, Ad, \psi)\gamma(s, \pi_2, Ad, \psi) \gamma(s, \pi_1 \times \widetilde{\pi_2}, \psi) \gamma(s, \widetilde{\pi_1} \times \pi_2, \psi)$,
\item $\gamma(s, \pi |\det |^x, Ad, \psi) = \gamma(s, \pi, Ad, \psi)$,
\item $\gamma(s, \pi, Ad, \psi)$ a un zéro simple en $s=0$,
\item $\gamma(s, \pi_1 \times \pi_2, \Lambda^2, \psi) = \gamma(s, \pi_1, \Lambda^2, \psi) \gamma(s, \pi_2, \Lambda^2, \psi) \gamma(s, \pi_1 \times \pi_2, \psi)$,
\item $\gamma(s, \pi |\det |^x, \Lambda^2, \psi) = \gamma(s + 2x, \pi, \Lambda^2, \psi)$,
\item $\gamma(s, \pi, \Lambda^2, \psi)$ a au plus un zéro simple en $s=0$ et $\gamma(0, \pi, \Lambda^2) = 0$ si et seulement si $\pi$ est dans l'image de l'application de transfert $T$,
\end{itemize}
pour tous $x,s \in \mathbb{C}$, $\pi \in \Pi_2(G_m)$ et $\pi_1, \pi_2 \in Temp(G_m)$.
\end{propriete}

On en déduit que
\begin{equation}
\begin{split}
&\gamma^*(0, \pi_\lambda, \overline{Ad}, \psi) = \left( \prod_{i=1}^t \prod_{1 \leq l \neq l' \leq m_i} (\frac{x_{i,l}(\lambda)-x_{i,l'}(\lambda)}{d_i}) \prod_{1 \leq l \neq l' \leq n_i} (\frac{\widetilde{x_{i,l}}(\lambda)-\widetilde{x_{i,l'}}(\lambda)}{d_i}) \right) \\
& \left( \prod_{j=1}^u \prod_{1 \leq l \neq l' \leq p_j} (\frac{y_{j,l}(\lambda)-y_{j,l'}(\lambda)}{e_j}) \right)
\left( \prod_{k=1}^v \prod_{1 \leq l \neq l' \leq q_k} (\frac{z_{k,l}(\lambda)-z_{k,l'}(\lambda)}{f_k}) \right) F(\lambda),
\end{split}
\end{equation}
où $F$ est une fonction $W(PG_{2n}, \tau)$-équivariante $C^\infty$ qui ne s'annule pas sur le voisinage $V$ (quitte à rétrécir $V$), il s'agit d'un produit de facteur $\gamma$ ne s'annulant pas sur $V$. De même, on a
\begin{equation}
\begin{split}
& \gamma(s, \pi_\lambda, \Lambda^2, \psi)^{-1} = \left( \prod_{i=1}^t \prod_{\substack{1 \leq l \leq m_i \\ 1\leq l' \leq n_i}} (s+\frac{x_{i,l}(\lambda)+\widetilde{x_{i,l'}}(\lambda)}{d_i})^{-1} \right) \\ &
\left( \prod_{j=1}^u \prod_{1 \leq l < l' \leq p_j} (s + \frac{y_{j,l}(\lambda)+y_{j,l'}(\lambda)}{e_j})^{-1} \right) 
 \left( \prod_{k=1}^v \prod_{1 \leq l \leq l' \leq q_k} (s+\frac{z_{k,l}(\lambda)+z_{k,l'}(\lambda)}{f_k})^{-1} \right) G(2\lambda+s),
 \end{split}
\end{equation}
où la fonction $G$ est une fonction $W(PG_{2n}, \tau)$-équivariante méromorphe sur $\mathcal{A} \otimes \mathbb{C}$ tel que ses diviseurs polaires ne rencontrent pas $2V+\mathcal{H}$ (quitte à rétrécir $V$); ici $\mathcal{H} = \{z \in \mathbb{C}, Re(z) > 0\} \cup \{0\}$ s'injecte dans $\mathcal{A} \otimes \mathbb{C}$ par l'application $s \in \mathcal{H} \mapsto \lambda_s \in \mathcal{A} \otimes \mathbb{C}$ dont les coordonnées sont $x_i(\lambda_s) = d_i(s, ..., s)$, $y_j(\lambda_s) = e_j(s, ..., s)$ et $z_k(\lambda_s) = f_k(s, ..., s)$.

On énonce maintenant le résultat fondamental de \cite{beuzart-plessis}, qui permet d'obtenir la proposition pour la représentation d'Asai. En reprenant les notations de \cite[Proposition 3.2.1]{beuzart-plessis}, on écrit
\begin{equation}
\varphi(\lambda) \frac{\gamma^*(0, \pi_\lambda, \overline{Ad}, \psi)}{\gamma(s, \pi_\lambda, \Lambda^2, \psi)} = \varphi_s(\lambda) 
\prod_{i=1}^t P_{m_i,n_i,s}(\frac{x_i(\lambda)}{d_i}) \prod_{j=1}^u Q_{p_j,s}(\frac{y_j(\lambda)}{e_j}) \prod_{i=1}^v R_{q_k,s}(\frac{z_k(\lambda)}{f_k}),
\end{equation}
où $\varphi_s(\lambda) = \varphi(\lambda) F(\lambda) G(2\lambda+s)$. 
La fonction $s \in \mathcal{H} \cup \{0\} \mapsto \varphi_s \in C^\infty_c(\mathcal{A})$ est continue.
De plus, $\varphi_s$ est $W(PG_{2n}, \tau)$-équivariante à support compact. Les lettres $P, Q, R$ désignent des fractions rationnelles qui apparaissent dans le quotient des facteurs $\gamma$ (voir \cite[section 3]{beuzart-plessis}).
\begin{proposition}[Beuzart-Plessis {\cite[Proposition 3.3.1]{beuzart-plessis}}]
\label{beuzart-plessis}
La limite
\begin{equation}
\lim_{s \rightarrow 0^+}  \frac{n s}{|W|} \int_{\mathcal{A}} \varphi_s(\lambda) 
\prod_{i=1}^t P_{m_i,n_i,s}(\frac{x_i(\lambda)}{d_i}) \prod_{j=1}^u Q_{p_j,s}(\frac{y_j(\lambda)}{e_j}) \prod_{i=1}^v R_{q_k,s}(\frac{z_k(\lambda)}{f_k}) d\lambda
\end{equation}
est nulle si $m_i \neq n_i$ pour un certain $i$ ou si l'un des $p_j$ est impair. De plus, dans le cas contraire, elle est égale à
\begin{equation}
\begin{split}
&\frac{D(2\pi)^{N-1}2^{-c}}{|W'|} \\
&\int_{\mathcal{A}'} \lim_{s \rightarrow 0^+} \varphi_s(\lambda') s^N \prod_{i=1}^t P_{m_i,n_i,s}(\frac{x_i(\lambda')}{d_i}) \prod_{j=1}^u Q_{p_j,s}(\frac{y_j(\lambda')}{e_j}) \prod_{i=1}^v R_{q_k,s}(\frac{z_k(\lambda')}{f_k}) d\lambda';
\end{split}
\end{equation}
où
\begin{itemize}
\item $D = \prod_{i=1}^t d_i^{n_i} \prod_{j=1}^u e_j^{\frac{p_j}{2}} \prod_{k=1}^v f_k^{\lceil \frac{q_k}{2} \rceil}$,
\item c est le cardinal des $1 \leq k \leq t$ tel que $q_k \equiv 1 \mod 2$,
\item $N = \sum_{i=1}^t n_i + \sum_{j=1}^u \frac{p_j}{2} + \sum_{k=1}^v \lceil \frac{q_k}{2} \rceil$,
\item $W$ et $W'$ sont définis de manière intrinsèque dans \cite[section 3.3]{beuzart-plessis}, $W$ est isomorphe à $W(PG_{2n}, \tau)$ et on introduira un groupe isomorphe à $W'$ plus loin.
\end{itemize}

De plus, $\mathcal{A}'$ est le sous-espace de $\mathcal{A}$ défini par les relations :
\begin{itemize}
\item $x_{i,l}(\lambda) + \widetilde{x_{i,l}}(\lambda) = 0$ pour tous $1 \leq i \leq t$ et $1 \leq l \leq n_i$,
\item $y_{j,l}(\lambda) + y_{j,p_j+1-l}(\lambda) = 0$ pour tous $1 \leq j \leq u$ et $1 \leq l \leq  \frac{p_j}{2} $,
\item $z_{k,l}(\lambda) + z_{k,q_k+1-l}(\lambda) = 0$ pour tous $1 \leq k \leq v$ et $1 \leq l \leq \lceil \frac{q_k}{2} \rceil$.
\end{itemize}
On équipe $\mathcal{A}'$ de la mesure Lebesgue provenant de l'isomorphisme
\begin{equation}
\mathcal{A}' \simeq \prod_{i=1}^t (i\mathbb{R})^{n_i} \prod_{j=1}^u (i\mathbb{R})^{\frac{p_j}{2}} \prod_{k=1}^v (i\mathbb{R})^{\lfloor \frac{q_k}{2} \rfloor}
\end{equation}
qui correspond à la projection consistant à supprimer les coordonnées redondantes : $\widetilde{x_{i,l}}$ pour tous $1 \leq i \leq t$ et $1 \leq l \leq n_i$, $y_{j,l}$ pour tous $1 \leq j \leq u$ et $\frac{p_j}{2} < l \leq p_j$, $z_{k,l}$ pour tous $1 \leq k \leq v$ et $\lceil \frac{q_k}{2} \rceil < l \leq q_k$.
\end{proposition}

\begin{proof}
Il nous faut donner une petite explication par rapport à la proposition 3.3.1 dans \cite{beuzart-plessis}. En effet, on l'utilise pour une famille $\varphi_s$ alors qu'elle est énoncée pour une unique fonction $\varphi$. Notons, comme dans \cite[section 3.3]{beuzart-plessis},
\begin{equation}
D_s(\varphi) = \int_{\mathcal{A}} \varphi(\lambda) 
\prod_{i=1}^t P_{m_i,n_i,s}(\frac{x_i(\lambda)}{d_i}) \prod_{j=1}^u Q_{p_j,s}(\frac{y_j(\lambda)}{e_j}) \prod_{i=1}^v R_{q_k,s}(\frac{z_k(\lambda)}{f_k}) d\lambda,
\end{equation}
\begin{equation}
\begin{split}
D'(\varphi) &= \frac{D(2\pi)^{N-1}2^{-c}}{n} \\
&\int_{\mathcal{A}'} \lim_{s \rightarrow 0^+} \varphi(\lambda') s^N \prod_{i=1}^t P_{m_i,n_i,s}(\frac{x_i(\lambda')}{d_i}) \prod_{j=1}^u Q_{p_j,s}(\frac{y_j(\lambda')}{e_j}) \prod_{i=1}^v R_{q_k,s}(\frac{z_k(\lambda')}{f_k}) d\lambda',
\end{split}
\end{equation}
pour tout $\varphi \in C^\infty_c(\mathcal{A})$.

On se contente du cas où $m_i = n_i$ pour tout $1 \leq i \leq t$ et que $p_j$ est pair pour tout $1 \leq j \leq u$, l'autre cas se fait en utilisant le même raisonnement en remplaçant $D'$ par $0$. On note $C^\infty_c(\mathcal{A})^W$ le sous-espace de $C^\infty_c(\mathcal{A})$ des fonctions $W$-équivariantes. D'après la proposition 3.3.1 de \cite{beuzart-plessis}, pour tout $\varphi \in C^\infty_c(\mathcal{A})^W$, on a
\begin{equation}
\lim_{s \rightarrow 0^+} sD_s(\varphi) = \frac{|W|}{|W'|}D'(\varphi).
\end{equation}

Les formes linéaires $D_s : C^\infty_c(\mathcal{A})^W \rightarrow \mathbb{C}$ et la fonction $s \in \mathcal{H} \cup \{0\} \mapsto \varphi_s \in C^\infty_c(\mathcal{A})$ sont continues. D'après le théorème de Banach-Steinhaus, on en déduit que
\begin{equation}
\lim_{s \rightarrow 0^+} sD_s(\varphi_s) = \frac{|W|}{|W'|}\lim_{s \rightarrow 0^+} D'(\varphi_s),
\end{equation}
ce qui est bien notre proposition.
\end{proof}

Supposons tout d'abord que $\pi$ n'est pas de la forme $T(\sigma)$ pour un certain $\sigma \in Temp(SO(2n+1))/Stab$. D'après la caractérisation \ref{caracTransf}, il existe $1 \leq i \leq r$ tel que $m_i \neq n_i$ ou $p_j$ est impair (on vérifie aisément que les autres cas se mettent sous la forme qui apparait dans \ref{caracTransf}). Alors en prenant $U$ suffisamment petit, on peut supposer que $U$ ne rencontre pas l'image de l'application de transfert $T$. Autrement dit, le terme de droite de la proposition est nul; d'après la proposition \ref{beuzart-plessis}, le terme de gauche l'est aussi.

Supposons maintenant qu'il existe $\sigma \in Temp(SO(2n+1))/Stab$ tel que $\pi = T(\sigma)$. Alors $m_i = n_i$ pour tout $1 \leq i \leq t$ et les $p_j$ sont pairs. De plus, on peut écrire
\begin{equation}
\sigma = \left( \bigtimes_{i=1}^t \tau_i^{\times n_i} \times \bigtimes_{j=1}^u \mu_j^{\times \frac{p_j}{2}} \times \bigtimes_{k=1}^v \nu_k^{\times \lfloor \frac{q_k}{2}\rfloor} \right) \rtimes \sigma_0,
\end{equation}
où $\sigma_0$ est une représentation de $SO(2m+1)$ pour un certain $m$ tel que
\begin{equation}
\label{sigma0}
T(\sigma_0) = \bigtimes_{\substack{k=1 \\ q_k \equiv 1 \mod 2}}^v \nu_k.
\end{equation}

On note $L = \prod_{i=1}^t G_{d_i}^{n_i} \times \prod_{j=1}^u G_{e_j}^{\frac{p_j}{2}} \times \prod_{k=1}^v G_{f_k}^{\lfloor \frac{q_k}{2} \rfloor} \times SO(2m+1)$ et $P$ un sous-groupe parabolique de $SO(2n+1)$ de Levi $L$.
On a $\sigma = i_P^{SO(2n+1)}(\Sigma)$, où $\Sigma \in \Pi_2(L)$. Le groupe $W'$ de la proposition \ref{beuzart-plessis} est isomorphe à $W(SO(2n+1), \sigma)$, où $W(SO(2n+1), \sigma)$ est le sous-groupe de $W(SO(2n+1), L)$ fixant la classe d'isomorphisme de $\sigma$.

On équipe $i\mathcal{A}^*_L = X^*(L) \otimes i\mathbb{R}$ de l'unique mesure de Haar telle que $i\mathcal{A}^*_L \slash (\frac{2i\pi}{\log(q_F)})X^*(L)$ a pour volume $1$.
Pour $\lambda' = \chi \otimes \mu \in i\mathcal{A}^*_L$, on note $\Sigma_{\lambda'} = \Sigma \otimes |\chi|^\mu$. Il existe un isomorphisme d'espace vectoriel $\mathcal{A'} \simeq i\mathcal{A}^*_L$ tel que lorsqu'on le compose avec l'application $\lambda' \in i\mathcal{A}^*_L \mapsto \sigma_{\lambda'} = i_P^{SO(2n+1)}(\Sigma_{\lambda'})\in Temp(SO(2n+1))$, on obtient l'application $\lambda' \in \mathcal{A}' \mapsto \sigma_{\lambda'} \in Temp(SO(2n+1))$, avec
\begin{equation}
\begin{split}
\sigma_{\lambda'} &= \left( \bigtimes_{i=1}^t \bigtimes_{l=1}^{n_i} \tau_i \otimes |\det|^{\frac{x_{i,l}(\lambda')}{d_i}} \right) \times \left( \bigtimes_{j=1}^u \bigtimes_{l=1}^{\frac{p_j}{2}}\mu_j \otimes |\det|^{\frac{y_{j,l}(\lambda')}{e_j}} \right) \\
&\times \left(\bigtimes_{k=1}^v \bigtimes_{l=1}^{\lfloor \frac{q_k}{2} \rfloor} \nu_k \otimes |\det|^{\frac{z_{k,l}(\lambda')}{f_k}} \right) \rtimes \sigma_0.
\end{split}
\end{equation}

L'isomorphisme $\mathcal{A'} \simeq i\mathcal{A}^*_L$ envoie la mesure de Haar de $\mathcal{A}$ sur $\left(\frac{\log(q_F)}{2\pi}\right)^{\dim(\mathcal{A}')}$ fois la mesure de Haar sur $i\mathcal{A}^*_L$. De plus, quitte à rétrécir $V$, pour $\lambda \in V$, $\pi_\lambda \in T(SO(2n+1)/Stab)$ si et seulement si $\lambda \in \mathcal{A}'$; d'après l'équivalence \ref{caracTransf}. Dans ce cas $\pi_\lambda = T(\sigma_\lambda)$.

En utilisant cette caractérisation et la définition de la fonction $\varphi$ (équation \ref{defvarphi}), on obtient
\begin{equation}
\label{membreDroite}
\begin{split}
& \int_{Temp(SO(2n+1))/Stab} \phi(T(\sigma)) \frac{\gamma^*(0, \sigma, Ad, \psi)}{|S_\sigma|} d\sigma \\
&= \frac{1}{|W'|} \left(\frac{log(q_F)}{2\pi}\right)^{dim(\mathcal{A}')} \int_{\mathcal{A}'} \phi(T(\sigma_{\lambda'})) \frac{\gamma^*(0, \sigma_{\lambda'}, Ad, \psi)}{|S_{\sigma_{\lambda'}}|} d\lambda' \\
&= \frac{1}{|W'|} \left(\frac{log(q_F)}{2\pi}\right)^{dim(\mathcal{A}')} \int_{\mathcal{A}'} \varphi(\lambda') \frac{\gamma^*(0, \sigma_{\lambda'}, Ad, \psi)}{|S_{\sigma_{\lambda'}}|} d\lambda'.
\end{split}
\end{equation}

De plus,
\begin{equation}
\label{Ssigma}
|S_{\sigma_{\lambda'}}| = \prod_{i=1}^t {d_i}^{n_i} \prod_{j=1}^u {e_j}^{\frac{p_j}{2}} \prod_{k=1}^v {f_k}^{\lfloor \frac{q_k}{2} \rfloor} |S_{\sigma_0}| = 2^c \frac{P}{D},
\end{equation}
d'après les notations de la proposition \ref{beuzart-plessis} et la relation \ref{sigma0}. D'autre part, d'après la proposition \ref{beuzart-plessis} et l'équation \ref{defvarphi}, on a
\begin{equation}
\begin{split}
& \lim_{s \rightarrow 0^+}  n \gamma(s, 1, \psi) \int_{Temp(PG_{2n})} \phi(\pi) \gamma(s, \pi, \lambda^2, \psi)^{-1} d\mu_{PG_{2n}}(\pi) = \frac{D(2\pi)^{N-1}2^{-c} \gamma^*(0, 1, \psi)log(q_F)}{|W'|P} \\
&\left(\frac{log(q_F)}{2\pi}\right)^{dim(\mathcal{A})}\int_{\mathcal{A}'} \lim_{s \rightarrow 0^+} \varphi_s(\lambda') s^N \prod_{i=1}^t P_{m_i,n_i,s}(\frac{x_i(\lambda')}{d_i}) \prod_{j=1}^u Q_{p_j,s}(\frac{y_j(\lambda')}{e_j}) \prod_{i=1}^v R_{q_k,s}(\frac{z_k(\lambda')}{f_k}) d\lambda'.
\end{split}
\end{equation}
Cette dernière intégrale est égale à
\begin{equation}
\int_{\mathcal{A}'} \varphi(\lambda') \lim_{s \rightarrow 0^+} s^N \frac{\gamma^*(0, \pi_{\lambda'}, \overline{Ad}, \psi)}{\gamma(s, \pi_{\lambda'}, \Lambda^2, \psi)} d\lambda'.
\end{equation}
De plus, on remarque que $s \mapsto \gamma(s, \pi_{\lambda'}, \Lambda^2, \psi)^{-1}$ a un pôle d'ordre $N$ en $s=0$. Notre membre de gauche est donc égal à
\begin{equation}
\label{finalgauche}
\frac{D\left(2\pi\right)^{N-1}2^{-c}log(q_F)}{|W'|P} \left(\frac{log(q_F)}{2\pi}\right)^{dim(\mathcal{A})} \int_{\mathcal{A}'} \varphi(\lambda') \frac{\gamma^*(0, \sigma_{\lambda'}, Ad, \psi)}{log(q_F)^N} d\lambda'.
\end{equation}
On a utilisé les relations $\gamma^*(0, 1, \psi)\gamma^*(0, \pi_{\lambda'}, \overline{Ad}, \psi) = \gamma^*(0, \pi_{\lambda'}, Ad, \psi)$ et
\begin{equation}
\frac{\gamma(s, T(\sigma_{\lambda'}), Ad, \psi)}{\gamma(s, T(\sigma_{\lambda'}), \Lambda^2, \psi)} = \gamma(s, \sigma_{\lambda'}, Ad, \psi).
\end{equation}

Dans l'expression \ref{finalgauche}, le facteur $\frac{log(q_F)}{2\pi}$ apparait avec un exposant $dim(\mathcal{A})-N+1 = dim(\mathcal{A}')$; on en déduit que \ref{finalgauche} est égal au membre de droite \ref{membreDroite}, d'après l'égalité \ref{Ssigma}.
\end{proof}

\section{Une formule d'inversion de Fourier}
\label{formInv}
On note $H_n$ le sous-groupe des matrices de la forme $\sigma_n \begin{pmatrix}
1 & X \\
0 & 1
\end{pmatrix}\begin{pmatrix}
g & 0 \\
0 & g
\end{pmatrix} \sigma_n^{-1}$ où $X$ est dans $M_n$ et $g$ dans $G_n$. On pose $H^P_n = H_n \cap P_{2n}$. On note $\theta$ le caractère sur $H_n$ qui envoie $\sigma_n \begin{pmatrix}
1 & X \\
0 & 1
\end{pmatrix}\begin{pmatrix}
g & 0 \\
0 & g
\end{pmatrix} \sigma_n^{-1}$ sur $\psi(Tr(X))$.

On équipe $H_n$, $H_n \cap N_{2n} \backslash{H_n}$ et $H^P_n \cap N_{2n} \backslash{H^P_n}$ des mesures suivantes :
\begin{itemize}
\item $\int_{H_n} f(s) ds = \int_{G_n} \int_{M_n} f\left(\sigma_n \begin{pmatrix}
1 & X \\
0 & 1
\end{pmatrix}\begin{pmatrix}
g & 0 \\
0 & g
\end{pmatrix} \sigma_n^{-1}\right) dX dg,$ pour $f \in \mathcal{S}(G_{2n}),$
\item $\int_{H_n \cap N_{2n} \backslash{H_n}} f(\xi) \theta(\xi)^{-1} d\xi = \int_{N_n \backslash G_n} \int_{V_n} f\left(\sigma_n \begin{pmatrix}
1 & X \\
0 & 1
\end{pmatrix}\begin{pmatrix}
g & 0 \\
0 & g
\end{pmatrix} \sigma_n^{-1}\right) dX dg,$ pour $f \in C^w(N_{2n} \backslash G_{2n}, \psi)$ (voir section \ref{fonctiontemperees} pour la definition de l'espace $C^w(N_{2n} \backslash G_{2n}, \psi)$),
\item $\int_{H^P_n \cap N_{2n} \backslash{H^P_n}} f(\xi) \theta(\xi)^{-1}d\xi = \int_{N_n \backslash P_n} \int_{V_n} f\left(\sigma_n \begin{pmatrix}
1 & X \\
0 & 1
\end{pmatrix}\begin{pmatrix}
g & 0 \\
0 & g
\end{pmatrix} \sigma_n^{-1}\right) dX dg,$ pour $f \in C^w(N_{2n} \backslash G_{2n}, \psi)$.
\end{itemize}

\begin{proposition}
\label{unfolding}
Soit $f \in \mathcal{S}(G_{2n})$, alors on a
\begin{equation}
\int_{H_n} f(s) \theta(s)^{-1} ds = \int_{H^P_n \cap N_{2n} \backslash{H^P_n}} \int_{H_n \cap N_{2n} \backslash{H_n}} W_f(\xi_p, \xi) \theta(\xi)^{-1} \theta(\xi_p) d\xi d\xi_p,
\end{equation}
où $W_f$ est la fonction de $G_{2n} \times G_{2n}$ définie par
\begin{equation}
W_f(g_1,g_2) = \int_{N_{2n}} f(g_1^{-1}ug_2) \psi(u)^{-1} du
\end{equation}
pour tous $g_1, g_2 \in G_{2n}$. De plus, l'intégrale double est absolument convergente.
\end{proposition}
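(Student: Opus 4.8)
The plan is to deduce the identity by reducing the right-hand side to the left-hand side through an application of Fourier inversion in a group of unipotent directions, the additive character $\psi$ on $N_{2n}$ playing the role of the Fourier kernel. I first record a compatibility of characters. Set $N_H = H_n \cap N_{2n}$; since $N_{2n} \subset P_{2n}$ one also has $H^P_n \cap N_{2n} = N_H$. An element of $N_H$ is of the form $\sigma_n \begin{pmatrix} g & Xg \\ 0 & g \end{pmatrix}\sigma_n^{-1}$ with $g \in N_n$ and $X$ upper triangular, and a direct inspection of its super-diagonal entries gives the $\psi$-value $\psi(Tr(Xg)) = \psi(Tr(X)) = \theta$, the last equality holding because $g$ is upper unipotent and $X$ upper triangular, so only the diagonal of $X$ survives in the trace. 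Hence $\theta = \psi$ on $N_H$. Together with the transformation rule $W_f(n_1 g_1, n_2 g_2) = \psi(n_1)^{-1}\psi(n_2) W_f(g_1, g_2)$ for $n_1, n_2 \in N_{2n}$, this shows that both integrands on the right-hand side are well defined on the quotients $N_H \backslash H^P_n$ and $N_H \backslash H_n$.

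Next, for fixed $\xi_p$, I combine the integration over $u \in N_{2n}$ with that over $\xi \in N_H \backslash H_n$. Thanks to $\theta = \psi$ on $N_H$, the integrand $(u, \xi) \mapsto f(\xi_p^{-1} u \xi)\psi(u)^{-1}\theta(\xi)^{-1}$ is invariant under $(u, \xi) \mapsto (u\eta^{-1}, \eta\xi)$ for $\eta \in N_H$; moreover the product map $(u, \xi) \mapsto u\xi$ is injective modulo this $N_H$-action, with image the subset $N_{2n} H_n \subset G_{2n}$. The inner double integral thus becomes a single integral of $f(\xi_p^{-1}\,\cdot\,)$ over $N_{2n} H_n$ against the pushforward measure. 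A dimension count shows where the real content lies: the source $N_H \backslash H^P_n \times N_{2n} \times N_H \backslash H_n$ has dimension $4n^2 - 2n$, whereas $H_n$ has dimension $2n^2$; the $n^2 - n = \dim(N_H \backslash H^P_n)$ directions carried by $\xi_p$ are meant to pair, through $\psi$, with as many directions of $N_{2n}$.

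The core of the argument is this Fourier inversion. After an explicit decomposition of $N_{2n}$ adapted to $H_n$ and $H^P_n$ --- separating the directions that reconstruct the coordinates of $H_n$ missing from $\xi$ (the $N_n$-factor of $g$ and the upper-triangular part of $X$) from those that pair with $\xi_p$ --- the integral over the latter and over $\xi_p$ takes a shape $\int \int F(a)\psi(\langle a, b\rangle)\, da\, db$ to which the inversion formula $\int_F \psi(xy)\, dy = \delta_0(x)$ on the underlying vector group applies, producing the Dirac mass that enforces the required relations. One then checks that the remaining $2n^2$-dimensional integral reassembles exactly into $\int_{G_n}\int_{M_n} f\left(\sigma_n \begin{pmatrix} g & Xg \\ 0 & g \end{pmatrix}\sigma_n^{-1}\right)\psi(Tr(X))^{-1}\, dX\, dg = \int_{H_n} f(s)\theta(s)^{-1}\, ds$. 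I expect the main obstacle to be precisely this bookkeeping: pinning down the decomposition of $N_{2n}$ and tracking the Jacobian factors $|\det|$ coming from the measure normalizations, so that the measures match after the inversion.

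Finally, for the absolute convergence of the double integral I would use that $f$ is compactly supported. For fixed $(\xi_p, \xi)$ the integral $W_f(\xi_p, \xi)$ runs over a compact subset of $N_{2n}$, so that $|W_f(\xi_p, \xi)| \leq \|f\|_\infty \cdot \mathrm{vol}\{u \in N_{2n} : \xi_p^{-1} u \xi \in \mathrm{supp}(f)\}$. A support estimate then shows that the set of $(\xi_p, \xi)$ for which this quantity is nonzero has compact image in $N_H \backslash H^P_n \times N_H \backslash H_n$, giving absolute convergence; equivalently, applying Tonelli to $|f|$ reduces the relevant triple integral to $\int_{H_n} |f| < \infty$, which retroactively justifies the interchanges of integration used above.
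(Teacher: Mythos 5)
Your preliminary reductions are fine (the identity $\theta = \psi$ on $N_H = H_n \cap N_{2n}$, the injectivity of $(u,\xi) \mapsto u\xi$ modulo $N_H$, the dimension count), but the engine of your proof --- a single global Fourier inversion pairing the $\xi_p$-directions with $n^2-n$ directions of $N_{2n}$ --- is exactly the step you defer as ``bookkeeping'', and it cannot be justified in the uniform way you describe. The inversion formula $\int\int F(a)\psi(\langle a,b\rangle)\,da\,db = F(0)$ presupposes either absolute convergence of the double integral or a carefully prescribed order of iterated integration, and here absolute convergence fails: the paper notes explicitly at the end of its proof that, while each partial integral converges absolutely, \emph{l'int\'egrale totale ne l'est pas} --- the integral in which all variables (including the $N_{2n}$-integration hidden inside $W_f$) carry absolute values is divergent. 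This is why the paper does not perform one global inversion but argues by induction on $n$: it splits $\Omega_n = \Omega_{n-1}\widetilde{\Omega}_n$, $\Omega^P_n = \Omega^P_{n-1}\widetilde{\Omega}^P_n$ and $N_{2n} = N_{2n-2}V$, applies the induction hypothesis to the $G_{2n-2}$-datum $\widetilde{f}'$, and then carries out two small, explicitly ordered Fourier inversions (the lemma on $\psi(-Ts(\widetilde{p}t\widetilde{p}^{-1}))$, then the inversion in $(\widetilde{Z},n_2)$), taking care never to interchange the $V$-integration with the integrals over $\widetilde{\Omega}_n$, $\widetilde{\Omega}^P_n$, $H_{n-1}$, and verifying separately the two places (\ref{combX}) and (\ref{combg}) where integrations are merged. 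Producing such an ordering discipline (or an equivalent regularization) is the actual mathematical content of the proposition, not an afterthought.

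Your convergence argument is also incorrect, and on two counts. First, Tonelli cannot ``retroactively justify'' the interchanges, precisely because the triple integral of $|f|$ diverges; indeed your own bound $|W_f(\xi_p,\xi)| \leq \|f\|_\infty \cdot \mathrm{vol}\{u \in N_{2n} : \xi_p^{-1}u\xi \in \mathrm{supp}(f)\}$, once integrated over $(\xi_p,\xi)$, \emph{is} that divergent triple integral, so it cannot establish convergence of the double integral. Second, the support claim is false: $W_f(\xi_p,\xi) \neq 0$ (or nonvanishing of your bounding volume) only forces $\xi \in N_{2n}\,\xi_p\,\mathrm{supp}(f)$, i.e.\ it confines $\xi$ to a compact subset of $N_{2n}\backslash G_{2n}$ \emph{depending on} $\xi_p$, but places no constraint on $\xi_p$; for instance if $\mathrm{supp}(f)$ meets $H_n$, then for every $\xi_p$ there are $\xi$ with $\xi_p^{-1}\xi \in \mathrm{supp}(f)$, so the support of the bound surjects onto the non-compact quotient $H^P_n \cap N_{2n}\backslash H^P_n$. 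The absolute convergence asserted in the proposition is a genuine decay statement about Whittaker-type functions, not a support statement: the paper proves it by passing to Iwasawa coordinates and combining the tempered bound of lemme \ref{majtemp} with the Jacquet--Shalika estimates (propositions \ref{maj_tj} and \ref{maj_mX}) and lemme \ref{convergenceAn}. Nothing of this sort can be extracted from the compactness of $\mathrm{supp}(f)$ alone.
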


\begin{proof}
On montre la proposition par récurrence sur $n$. Pour $n=1$, $\sigma_n$ est trivial, $H_1 = N_2 Z(G_2)$ et $H^P_1 = N_2$ donc $H^P_1 \cap N_2 \backslash{H^P_1}$ est trivial. Le membre de droite est alors
\begin{equation}
\int_{F^*} W_f \left(1, \begin{pmatrix}
z & 0 \\
0 & z
\end{pmatrix} \right) dz = \int_{F^*} \int_{N_2} f \left(u\begin{pmatrix}
z & 0 \\
0 & z
\end{pmatrix} \right) \psi(u)^{-1} du dz.
\end{equation}

Ce qui est bien l'égalité voulue. Supposons maintenant que $n > 1$ et que la proposition soit vraie au rang $n-1$.

Le sous groupe $\Omega_n$ des matrices de la forme
$\sigma_n \begin{pmatrix}
1 & Y \\
0 & 1
\end{pmatrix}\begin{pmatrix}
h & 0 \\
0 & h
\end{pmatrix} \sigma_n^{-1}$ où $Y$ est une matrice triangulaire inférieure stricte de taille $n$ et $h \in \overline{B}_n$ le sous-groupe des matrices triangulaires inférieures inversible, s'identifie à un ouvert dense du quotient $H_n \cap N_{2n} \backslash{H_n}$. On injecte $\Omega_{n-1}$ dans $\Omega_n$, en rajoutant des 0 sur la dernière ligne et colonne de $Y$ et voyant $h$ comme un élément de $\overline{B}_n$. On note $\widetilde{\Omega}_n$ l'ensemble des matrices de la
forme $\sigma_n \begin{pmatrix}
1 & \widetilde{Y} \\
0 & 1
\end{pmatrix}\begin{pmatrix}
\widetilde{h} & 0 \\
0 & \widetilde{h}
\end{pmatrix} \sigma_n^{-1}$
où $\widetilde{Y}$ est de la forme $\begin{pmatrix}
0_{n-1} & 0 \\
\widetilde{y} & 0
\end{pmatrix}$ avec $\widetilde{y} \in F^{n-1}$ et $\widetilde{h}$ de la forme $\begin{pmatrix}
1_{n-1} & 0 \\
\widetilde{l} & \widetilde{l}_n
\end{pmatrix}$ avec $\widetilde{l} \in F^{n-1}$ et $\widetilde{l}_n \in F^*$. Dans la suite, on fera l'identification de $F^{n-1} \times F^{n-1} \times F^*$ et $\widetilde{\Omega}_n$ à
travers l'isomorphisme $(\widetilde{y}, \widetilde{l}, \widetilde{l}_n) \in F^{n-1} \times F^{n-1} \times F^*\mapsto \sigma_n \begin{pmatrix}
1 & \widetilde{Y} \\
0 & 1
\end{pmatrix}\begin{pmatrix}
\widetilde{h} & 0 \\
0 & \widetilde{h}
\end{pmatrix} \sigma_n^{-1} \in \widetilde{\Omega}_n$ où $\widetilde{Y} = \begin{pmatrix}
0_{n-1} & 0 \\
\widetilde{y} & 0
\end{pmatrix}$ et $\widetilde{h} = \begin{pmatrix}
1_{n-1} & 0 \\
\widetilde{l} & \widetilde{l}_n
\end{pmatrix}$. On en déduit que $\Omega_n = \Omega_{n-1} \widetilde{\Omega}_n$. 

De même, on dispose d'une décomposition, $\Omega^P_n = \Omega^P_{n-1} \widetilde{\Omega}^P_n$, où $\Omega^P_n$ est l'ensemble des matrices de $\Omega_n$ avec $h \in P_n$ et $\widetilde{\Omega}^P_n$ est l'ensemble des matrices de la
forme $\sigma_n \begin{pmatrix}
1 & \widetilde{Z} \\
0 & 1
\end{pmatrix}\begin{pmatrix}
\widetilde{p} & 0 \\
0 & \widetilde{p}
\end{pmatrix} \sigma_n^{-1}$
où $\widetilde{Y}$ est de la forme $\begin{pmatrix}
0_{n-1} & 0 \\
\widetilde{z} & 0
\end{pmatrix}$ avec $\widetilde{z} \in F^{n-1}$ et $\widetilde{p}$ de la forme $\begin{pmatrix}
1_{n-2} & 0 & 0 \\
\widetilde{l} & \widetilde{l}_{n-1} & 0 \\
0 & 0 & 1
\end{pmatrix}$ avec $\widetilde{l} \in F^{n-2}$ et $\widetilde{l}_{n-1} \in F^*$. De plus, $\Omega^P_n$ s'identifie à un ouvert dense du quotient $H^P_n \cap N_{2n} \backslash{H^P_n}$. Dans la suite, on fera l'identification de $F^{n-1} \times F^{n-2} \times F^*$ et $\widetilde{\Omega}^P_n$ à
travers l'isomorphisme $(\widetilde{z}, \widetilde{l}, \widetilde{l}_{n-1}) \in F^{n-1} \times F^{n-2} \times F^* \mapsto \sigma_n \begin{pmatrix}
1 & \widetilde{Z} \\
0 & 1
\end{pmatrix}\begin{pmatrix}
\widetilde{p} & 0 \\
0 & \widetilde{p}
\end{pmatrix} \sigma_n^{-1} \in \widetilde{\Omega}^P_n$ où $\widetilde{Z} = \begin{pmatrix}
0_{n-1} & 0 \\
\widetilde{z} & 0
\end{pmatrix}$ et $\widetilde{p} = \begin{pmatrix}
1_{n-2} & 0 & 0 \\
\widetilde{l} & \widetilde{l}_{n-1} & 0 \\
0 & 0 & 1
\end{pmatrix}$.

On équipe $\Omega_n$, $\widetilde{\Omega}_n$, $\Omega^P_n$, $\widetilde{\Omega}^P_n$ des mesures suivantes :
\begin{itemize}
\item $\int_{\Omega_n} f(\xi) d\xi = \int_{\overline{B}_n} \int_{V_n} f\left(\sigma_n \begin{pmatrix}
1 & Y \\
0 & 1
\end{pmatrix}\begin{pmatrix}
h & 0 \\
0 & h
\end{pmatrix} \sigma_n^{-1}\right) dY dh,$ pour $f \in \mathcal{S}(G_{2n}),$

\item $\int_{\widetilde{\Omega}_n} f(\widetilde{\xi}) d\widetilde{\xi} = \int_{F_{n-1} \times F^*} \int_{F^{n-1}} f\left(\sigma_n \begin{pmatrix}
1 & \widetilde{Y} \\
0 & 1
\end{pmatrix}\begin{pmatrix}
\widetilde{h} & 0 \\
0 & \widetilde{h}
\end{pmatrix} \sigma_n^{-1}\right) d\widetilde{Y} d\widetilde{h},$ pour $f \in \mathcal{S}(G_{2n}),$

\item $\int_{\Omega^P_n} f(\xi_p) d\xi_p = \int_{\overline{B}_n \cap P_n} \int_{V_n} f\left(\sigma_n \begin{pmatrix}
1 & Z \\
0 & 1
\end{pmatrix}\begin{pmatrix}
p & 0 \\
0 & p
\end{pmatrix} \sigma_n^{-1}\right) dZ dp,$ pour $f \in \mathcal{S}(G_{2n}),$

\item $\int_{\widetilde{\Omega}^P_n} f(\widetilde{\xi}_p) d\widetilde{\xi}_p = \int_{F_{n-2} \times F^*} \int_{F^{n-1}} f\left(\sigma_n \begin{pmatrix}
1 & \widetilde{Z} \\
0 & 1
\end{pmatrix}\begin{pmatrix}
\widetilde{p} & 0 \\
0 & \widetilde{p}
\end{pmatrix} \sigma_n^{-1}\right) d\widetilde{Z} d\widetilde{p},$ pour $f \in \mathcal{S}(G_{2n}).$
\end{itemize}

On utilise ces décompositions pour écrire le membre de droite de la proposition sous la forme
\begin{equation}
\int_{\widetilde{\Omega}^P_n} \int_{\Omega^P_{n-1}} \int_{\widetilde{\Omega}_n} \int_{\Omega_{n-1}} W_f(\xi_p'\widetilde{\xi}_p, \xi'\widetilde{\xi}) |\det \xi_p'\xi'|^{-1} d\xi' d\widetilde{\xi} d\xi_p' d\widetilde{\xi}_p,
\end{equation}
On a choisi les représentants des matrices $Y$ et $\widetilde{Y}$ de sorte que le caractère $\theta$ soit trivial.

On fixe $\widetilde{\xi}_p \in \widetilde{\Omega}_n^P$ et $\widetilde{\xi} \in \widetilde{\Omega}_n$. On pose $f' = L(\widetilde{\xi}_p)R(\widetilde{\xi})f$, on a alors
 \begin{equation}
 \begin{split}
 & \int_{\Omega^P_{n-1}} \int_{\Omega_{n-1}} W_f(\xi_p'\widetilde{\xi}_p, \xi'\widetilde{\xi}) |\det \xi_p'\xi'|^{-1} d\xi' d\xi_p'= \\
 & \int_{\Omega^P_{n-1}} \int_{\Omega_{n-1}} W_{f'}(\xi_p', \xi') |\det \xi_p'\xi'|^{-1} d\xi' d\xi_p'.
 \end{split}
 \end{equation}

De plus,
 \begin{equation}
 W_{f'}(\xi_p', \xi') = \int_{N_{2n-2}} \int_V f'({\xi'}_p^{-1} v u \xi') \psi(u)^{-1}\psi(v)^{-1} dv du,
 \end{equation}
 où $V$ est le sous-groupe des matrices de $N_{2n}$ avec seulement les deux dernières colonnes non triviales, on dispose donc d'une décomposition $N_{2n} = N_{2n-2}V$. On effectue le changement de variable $v \mapsto {\xi'}_p v {\xi'}_p^{-1}$, ce qui donne
 \begin{equation}
 W_{f'}(\xi_p', \xi') = |\det \xi_p'|^{2}\int_{N_{2n-2}} \int_V f'(v {\xi'}_p^{-1} u \xi') \psi(u)^{-1}\psi(v)^{-1} dv du.
 \end{equation}

On note $\widetilde{f}'(g) = |\det g|^{-1}\int_V f'\left(v\begin{pmatrix}
g & 0 \\
0 & I_2
\end{pmatrix}\right) \psi(v)^{-1} dv$ pour $g \in G_{2n-2}$; alors $\widetilde{f}' \in \mathcal{S}(G_{2n-2})$. On obtient ainsi l'égalité
\begin{equation}
W_{f'}(\xi_p', \xi') = |\det \xi_p' \xi'| W_{\widetilde{f}'}(\xi_p', \xi').
\end{equation}

Appliquons l'hypothèse de récurrence,
 \begin{equation}
 \begin{split}
 & \int_{\Omega^P_{n-1}} \int_{\Omega_{n-1}} W_{f'}(\xi_p', \xi') |\det \xi_p'\xi'|^{-1} d\xi' d\xi_p' = \\
 & \int_{\Omega^P_{n-1}} \int_{\Omega_{n-1}} W_{\widetilde{f}'}(\xi_p', \xi') d\xi' d\xi_p' = \int_{H_{n-1}} \widetilde{f}'(s) \theta(s)^{-1} ds = \\
 & \int_{H_{n-1}} |\det s|^{-1} \int_V f(\widetilde{\xi}_p^{-1}v s \widetilde{\xi}) \theta(s)^{-1} \psi(v)^{-1} dv ds.
 \end{split}
 \end{equation}

Il nous faut maintenant intégrer sur $\widetilde{\xi}_p$ et $\widetilde{\xi}$ pour revenir à notre membre de droite. Explicitons l'intégrale sur $\widetilde{\xi}_p$ en le décomposant sous la forme $\sigma_n \begin{pmatrix}
1 & \widetilde{Z} \\
0 & 1
\end{pmatrix}\begin{pmatrix}
\widetilde{p} & 0 \\
0 & \widetilde{p}
\end{pmatrix} \sigma_n^{-1}$. On rappelle que l'on identifie $F^{n-1} \times F^{n-2} \times F^*$ et $\widetilde{\Omega}^P_n$ à
travers l'isomorphisme $(\widetilde{z}, \widetilde{l}, \widetilde{l}_{n-1}) \in F^{n-1} \times F^{n-2} \times F^* \mapsto \sigma_n \begin{pmatrix}
1 & \widetilde{Z} \\
0 & 1
\end{pmatrix}\begin{pmatrix}
\widetilde{p} & 0 \\
0 & \widetilde{p}
\end{pmatrix} \sigma_n^{-1} \in \widetilde{\Omega}^P_n$ où $\widetilde{Z} = \begin{pmatrix}
0_{n-1} & 0 \\
\widetilde{z} & 0
\end{pmatrix}$ et $\widetilde{p} = \begin{pmatrix}
1_{n-2} & 0 & 0 \\
\widetilde{l} & \widetilde{l}_{n-1} & 0 \\
0 & 0 & 1
\end{pmatrix}$. On obtient alors
\begin{equation}
\begin{split}
\int_{F^{n-2} \times F^*} \int_{F^{n-1}} \int_{\widetilde{\Omega}_n} \int_{H_{n-1}} |\det s|^{-1} \int_V &f\left(\sigma_n \begin{pmatrix}
\widetilde{p}^{-1} & 0 \\
0 & \widetilde{p}^{-1}
\end{pmatrix} \begin{pmatrix}
1 & -\widetilde{Z} \\
0 & 1
\end{pmatrix} \sigma_n^{-1} v s \widetilde{\xi}\right) \\
&\theta(s)^{-1} \psi(v)^{-1} dv ds d\widetilde{\xi} d\widetilde{Z} d\widetilde{p}.
\end{split}
\end{equation}

La conjugaison de $v$ par $\sigma_n^{-1}$ s'écrit sous la forme $\begin{pmatrix}
n_1 & y \\
t & n_2
\end{pmatrix}$ où $n_1, n_2$ sont dans $U_n$, les coefficients de $y$ sont nuls sauf la dernière colonne et $t$ est de la forme $\begin{pmatrix}
0_{n-1} & * \\
0 & 0
\end{pmatrix}$. Le caractère $\psi(v)$ devient après conjugaison $\psi(Tr(y)+Ts(t))$, où $Ts(t) = t_{n-1,n}$. Les changements de variables $\widetilde{Z} \mapsto \widetilde{p}\widetilde{Z}\widetilde{p}^{-1}$, $n_1 \mapsto \widetilde{p}n_1\widetilde{p}^{-1}$, $n_2 \mapsto \widetilde{p}n_2\widetilde{p}^{-1}$,
$t \mapsto \widetilde{p}t\widetilde{p}^{-1}$ et $y \mapsto \widetilde{p}y\widetilde{p}^{-1}$ transforme l'intégrale précédente en
\begin{equation}
\begin{split}
\int_{F^{n-2} \times F^*} \int_{F^{n-1}} \int_{\widetilde{\Omega}_n} \int_{H_{n-1}} & |\det s|^{-1}\int_{\sigma_n^{-1}V\sigma_n} f\left(\sigma_n \begin{pmatrix}
1 & -\widetilde{Z} \\
0 & 1
\end{pmatrix}  \begin{pmatrix}
n_1 & y \\
t & n_2
\end{pmatrix} \begin{pmatrix}
\widetilde{p}^{-1} & 0 \\
0 & \widetilde{p}^{-1}
\end{pmatrix} \sigma_n^{-1} s \widetilde{\xi}\right) \\
& \theta(s)^{-1} \psi(-Tr(y)) \psi(-Ts(\widetilde{p}t\widetilde{p}^{-1}))|\det \widetilde{p}|^3  d\begin{pmatrix}
n_1 & y \\
t & n_2
\end{pmatrix} ds d\widetilde{\xi} d\widetilde{Z} d\widetilde{p}.
\end{split}
\end{equation}

On explicite maintenant l'intégrale sur $s$ ce qui donne que $\sigma_n^{-1}s \sigma_n$ est de la forme $\begin{pmatrix}
1 & X \\
0 & 1
\end{pmatrix} \begin{pmatrix}
g & 0 \\
0 & g
\end{pmatrix}$ avec $X$ une matrice de taille $n$ dont la dernière ligne et dernière colonne sont nulles et $g \in G_{n-1}$ vu comme élément de $G_n$.
Le changement de variable $X \mapsto \widetilde{p}X\widetilde{p}^{-1}$ donne
\begin{equation}
\begin{split}
& \int_{F^{n-2} \times F^*} \int_{F^{n-1}} \int_{\widetilde{\Omega}_n} \int_{M_{n-1}} \int_{G_{n-1}}  |\det \widetilde{p}^{-1}g|^{-2}\int_{\sigma_n^{-1}V\sigma_n} \\
& f\left(\sigma_n \begin{pmatrix}
1 & -\widetilde{Z} \\
0 & 1
\end{pmatrix}  \begin{pmatrix}
n_1 & y \\
t & n_2
\end{pmatrix} \begin{pmatrix}
1 & X \\
0 & 1
\end{pmatrix} \begin{pmatrix}
\widetilde{p}^{-1} g & 0 \\
0 & \widetilde{p}^{-1} g
\end{pmatrix} \sigma_n^{-1} \widetilde{\xi}\right) \\
& \psi(-Tr(X)) \psi(-Tr(y)) \psi(-Ts(\widetilde{p}t\widetilde{p}^{-1}))  |\det \widetilde{p}| d\begin{pmatrix}
n_1 & y \\
t & n_2
\end{pmatrix} dg dX d\widetilde{\xi} d\widetilde{Z} d\widetilde{p}.
\end{split}
\end{equation}

On effectue maintenant le changement de variables $g \mapsto \widetilde{p}g$, notre intégrale devient alors
\begin{equation}
\begin{split}
& \int_{F^{n-2} \times F^*} \int_{F^{n-1}} \int_{\widetilde{\Omega}_n} \int_{M_{n-1}} \int_{G_{n-1}}  |\det g|^{-2}\int_{\sigma_n^{-1}V\sigma_n} \\
& f\left(\sigma_n \begin{pmatrix}
1 & -\widetilde{Z} \\
0 & 1
\end{pmatrix}  \begin{pmatrix}
n_1 & y \\
t & n_2
\end{pmatrix} \begin{pmatrix}
1 & X \\
0 & 1
\end{pmatrix} \begin{pmatrix}
g & 0 \\
0 & g
\end{pmatrix} \sigma_n^{-1} \widetilde{\xi}\right) \\
& \psi(-Tr(X)) \psi(-Tr(y)) \psi(-Ts(\widetilde{p}t\widetilde{p}^{-1}))  |\det \widetilde{p}| d\begin{pmatrix}
n_1 & y \\
t & n_2
\end{pmatrix} dg dX d\widetilde{\xi} d\widetilde{Z} d\widetilde{p}.
\end{split}
\end{equation}

\begin{lemme}
Soit $F \in \mathcal{S}(M_n)$, alors
\begin{equation}
\int_{F^{n-2} \times F^*} \int_{Lie(U_n)} F(t) \psi(-Ts(\widetilde{p}t\widetilde{p}^{-1}))|\det \widetilde{p}| dt d\widetilde{p} = F(0).
\end{equation}

On rappelle que l'on identifie $F^{n-2} \times F^*$ à l'ensemble des matrices de la forme $\begin{pmatrix}
1_{n-2} & 0 \\
\widetilde{l} & \widetilde{l}_{n-1}
\end{pmatrix}$ avec $\widetilde{l} \in F^{n-2}$ et $\widetilde{l}_{n-1} \in F^*$. 
\end{lemme}

\begin{proof}
La mesure $|\det \widetilde{p}| d\widetilde{p}$ correspond à la mesure additive sur $F^{n-1}$. En remarquant que $Ts(\widetilde{p}t\widetilde{p}^{-1})$ n'est autre que le produit scalaire des vecteurs dans $F^{n-1}$ correspondant à $\widetilde{p}$ et $t$, le lemme n'est autre qu'une
formule d'inversion de Fourier.
\end{proof}

Le lemme précédent nous permet de simplifier notre intégrale en
\begin{equation}
\begin{split}
\int_{F^{n-1}} \int_{\widetilde{\Omega}_n} \int_{M_{n-1}} \int_{G_{n-1}}  & |\det g|^{-2}\int_{\sigma_n^{-1}V_0\sigma_n} f\left(\sigma_n \begin{pmatrix}
1 & -\widetilde{Z} \\
0 & 1
\end{pmatrix}  \begin{pmatrix}
n_1 & y \\
0 & n_2
\end{pmatrix} \begin{pmatrix}
1 & X \\
0 & 1
\end{pmatrix} \begin{pmatrix}
g & 0 \\
0 & g
\end{pmatrix} \sigma_n^{-1} \widetilde{\xi}\right) \\
& \psi(-Tr(X)) \psi(-Tr(y))  d\begin{pmatrix}
n_1 & y \\
0 & n_2
\end{pmatrix} dg dX d\widetilde{\xi} d\widetilde{Z},
\end{split}
\end{equation}
où $\sigma_n^{-1}V_0\sigma_n$ est le sous-groupe de $\sigma_n^{-1}V\sigma_n$ où $t=0$.

On explicite l'intégration sur $\widetilde{\xi}$ de la forme $\sigma_n \begin{pmatrix}
1 & \widetilde{Y} \\
0 & 1
\end{pmatrix}\begin{pmatrix}
\widetilde{h} & 0 \\
0 & \widetilde{h}
\end{pmatrix} \sigma_n^{-1}$
où $\widetilde{Y}$ est une matrice de la forme $\begin{pmatrix}
0_{n-1} & 0 \\
\widetilde{y} & 0
\end{pmatrix}$ avec $\widetilde{y} \in F^{n-1}$ et $\widetilde{h} \in F^{n-1} \times F^*$ que l'on identifie avec un élément de $G_n$ dont seule la dernière ligne est non triviale.
Ce qui nous permet d'identifier $F^{n-1} \times F^{n-1} \times F^*$ et $\widetilde{\Omega}_n$. L'intégrale devient

\begin{equation}
\begin{split}
&\int_{F^{n-1}} \int_{F^{n-1}} \int_{F^{n-1} \times F^*} \int_{G_{n-1}} \int_{M_{n-1}}  |\det g|^{-2}\int_{\sigma_n^{-1} V_0 \sigma_n} \\
& f\left(\sigma_n \begin{pmatrix}
1 & -\widetilde{Z} \\
0 & 1
\end{pmatrix}  \begin{pmatrix}
n_1 & y \\
0 & n_2
\end{pmatrix} \begin{pmatrix}
1 & X \\
0 & 1
\end{pmatrix} \begin{pmatrix}
g & 0 \\
0 & g
\end{pmatrix} \begin{pmatrix}
1 & \widetilde{Y} \\
0 & 1
\end{pmatrix}\begin{pmatrix}
\widetilde{h} & 0 \\
0 & \widetilde{h}
\end{pmatrix} \sigma_n^{-1} \right)  \\
& \psi(-Tr(X)) \psi(-Tr(y)) d\begin{pmatrix}
n_1 & y \\
0 & n_2
\end{pmatrix} dX dg d\widetilde{h} d\widetilde{Y} d\widetilde{Z}.
\end{split}
\end{equation}

On remarque que l'on a
\begin{equation}
\begin{pmatrix}
n_1 & y \\
0 & n_2
\end{pmatrix} \begin{pmatrix}
1 & X \\
0 & 1
\end{pmatrix} \begin{pmatrix}
g & 0 \\
0 & g
\end{pmatrix} \begin{pmatrix}
1 & \widetilde{Y} \\
0 & 1
\end{pmatrix} = \begin{pmatrix}
n_1 & 0 \\
0 & n_2
\end{pmatrix}\begin{pmatrix}
1 & n_1^{-1}y + X + g\widetilde{Y}g^{-1} \\
0 & 1
\end{pmatrix}\begin{pmatrix}
g & 0 \\
0 & g
\end{pmatrix},
\end{equation}
 On effectue les changement de variable $y \mapsto n_1y$ et $\widetilde{Y} \mapsto g^{-1}\widetilde{Y} g$ et on combine les intégrales sur $X$, $y$ et $\widetilde{Y}$ en une intégration sur $M_n$ dont on note encore la variable $X$. On obtient alors

\begin{equation}
\label{combX}
\begin{split}
&\int_{F^{n-1}} \int_{F^{n-1} \times F^*} \int_{G_{n-1}} \int_{M_n}  |\det g|^{-1}\int_{U_n^2} \\
& f\left(\sigma_n \begin{pmatrix}
1 & -\widetilde{Z} \\
0 & 1
\end{pmatrix}  \begin{pmatrix}
n_1 & 0 \\
0 & n_2
\end{pmatrix} \begin{pmatrix}
1 & X \\
0 & 1
\end{pmatrix} \begin{pmatrix}
g\widetilde{h} & 0 \\
0 & g\widetilde{h}
\end{pmatrix} \sigma_n^{-1} \right)  \psi(-Tr(X)) d(n_1,n_2) dX dg d\widetilde{h} d\widetilde{Z}.
\end{split}
\end{equation}

On effectue le changement de variable $n_2 \mapsto n_2n_1$ et on remarque que l'on a
\begin{equation}
\begin{pmatrix}
1 & -\widetilde{Z} \\
0 & 1
\end{pmatrix}  \begin{pmatrix}
n_1 & 0 \\
0 & n_2n_1
\end{pmatrix} \begin{pmatrix}
1 & X \\
0 & 1
\end{pmatrix} = \begin{pmatrix}
1 & n_1Xn_1^{-1}-\widetilde{Z}n_2 \\
0 & n_2
\end{pmatrix}  \begin{pmatrix}
n_1 & 0 \\
0 & n_1
\end{pmatrix}.
\end{equation}

Le changement de variables $X \mapsto n_1^{-1}(X + \widetilde{Z}n_2)n_1$ nous donne alors
\begin{equation}
\begin{split}
\int_{F^{n-1}} \int_{F^{n-1} \times F^*} \int_{G_{n-1}} \int_{M_n}  & |\det g|^{-1}\int_{U_n^2} f\left(\sigma_n \begin{pmatrix}
1 & X \\
0 & n_2
\end{pmatrix} \begin{pmatrix}
n_1g\widetilde{h} & 0 \\
0 & n_1g\widetilde{h}
\end{pmatrix} \sigma_n^{-1} \right) \\
& \psi(-Tr(X))  \psi(-Tr(\widetilde{Z}n_2)) d(n_1,n_2) dX dg d\widetilde{h} d\widetilde{Z}.
\end{split}
\end{equation}

On reconnait une formule d'inversion de Fourier selon les variables $\widetilde{Z}$ et $n_2$ ce qui nous permet de simplifier notre intégrale en
\begin{equation}
\label{combg}
\begin{split}
\int_{F^{n-1} \times F^*} \int_{G_{n-1}} \int_{M_n}  |\det g|^{-1}\int_{U_n} & f\left(\sigma_n \begin{pmatrix}
1 & X \\
0 & 1
\end{pmatrix} \begin{pmatrix}
n_1g\widetilde{h} & 0 \\
0 & n_1g\widetilde{h}
\end{pmatrix} \sigma_n^{-1} \right) \\
& \psi(-Tr(X))  dn_1 dX dg d\widetilde{h}.
\end{split}
\end{equation}

Après combinaison des intégrations sur $n_1$, $g$, $\widetilde{h}$; on trouve bien notre membre de gauche
\begin{equation}
\int_{G_n} \int_{M_n}  f\left(\sigma_n \begin{pmatrix}
1 & X \\
0 & 1
\end{pmatrix} \begin{pmatrix}
g & 0 \\
0 & g
\end{pmatrix} \sigma_n^{-1} \right) \psi(-Tr(X))  dX dg.
\end{equation}

On remarquera que l'on a pris garde à ne pas échanger l'intégrale sur $V$ avec les intégrales sur $\widetilde{\Omega}_n$, $\widetilde{\Omega}^P_n$ et $H_{n-1}$ qui chacune est absolument convergente mais l'intégrale totale ne l'est pas. On s'est contenté d'échanger des intégrales sur $\widetilde{\Omega}_n$, $\widetilde{\Omega}^P_n$ et $H_{n-1}$ d'une part, d'échanger des intégrales sur les $n_1$, $n_2$, $t$, $y$ qui compose l'intégrale sur $V$ d'autre part. On doit seulement vérifier qu'il n'y a pas de problème de convergence lorsque l'on combine l'intégration en $X$ sur $M_n$ (cf. intégrale \ref{combX}) et lorsque l'on échange l'intégrale sur $U_n$ et $M_n$ (cf. intégrale \ref{combg}). Pour ce qui est de la dernière intégrale, on intègre sur un sous-groupe fermé  et $f \in \mathcal{S}(G_{2n})$ donc l'intégrale est absolument convergente. Pour ce qui est de l'intégrale \ref{combX}, à part l'intégration sur $\widetilde{Z}$, on intègre sur un sous-groupe fermé donc on peut bien combiner les intégrales.

Finissons par montrer la convergence absolue de notre membre de droite. Notons $r(g) = 1 + ||e_{2n}g||_\infty$. On a
\begin{equation}
\begin{split}
& W_{r^N |\det|^{-\frac{1}{2}} f}\left(\sigma_n \begin{pmatrix}
1 & X' \\
0 & 1
\end{pmatrix} \begin{pmatrix}
a'k' & 0 \\
0 & a'k'
\end{pmatrix} \sigma_n^{-1}, \sigma_n \begin{pmatrix}
1 & X \\
0 & 1
\end{pmatrix} \begin{pmatrix}
ak & 0 \\
0 & ak
\end{pmatrix} \sigma_n^{-1}\right) = \\
& (1+|a_n|)^N |\det a(a')^{-1}|^{-1} W_f\left(\sigma_n \begin{pmatrix}
1 & X' \\
0 & 1
\end{pmatrix} \begin{pmatrix}
a'k' & 0 \\
0 & a'k'
\end{pmatrix} \sigma_n^{-1}, \sigma_n \begin{pmatrix}
1 & X \\
0 & 1
\end{pmatrix} \begin{pmatrix}
ak & 0 \\
0 & ak
\end{pmatrix} \sigma_n^{-1}\right),
\end{split}
\end{equation}
pour tous $a \in A_n$, $a' \in A_{n-1}$, $X, X' \in V_n$, $k \in K_n$ et  $k' \in K_{n-1}$.

Il suffit de vérifier la convergence de l'intégrale
\begin{equation}
\label{conv}
\begin{split}
&\int_{V_n} \int_{A_{n-1}} \int_{V_n} \int_{A_n} (1+|a_n|)^{-N} |\det a(a')^{-1}| \\
& W_{r^N |\det|^{-\frac{1}{2}}f}\left(\sigma_n \begin{pmatrix}
1 & X' \\
0 & 1
\end{pmatrix} \begin{pmatrix}
a'k' & 0 \\
0 & a'k'
\end{pmatrix} \sigma_n^{-1}, \sigma_n \begin{pmatrix}
1 & X \\
0 & 1
\end{pmatrix} \begin{pmatrix}
ak & 0 \\
0 & ak
\end{pmatrix} \sigma_n^{-1}\right) \\
&\delta_{B_n}(a)^{-1} \delta_{B_{n-1}}(a')^{-1}da dX da' dX'
\end{split}
\end{equation}
pour $N$ suffisamment grand. On note $u_X  = \sigma_n \begin{pmatrix}
1 & X \\
0 & 1
\end{pmatrix} \sigma_n^{-1}$ et $u_{X'}  = \sigma_n \begin{pmatrix}
1 & X' \\
0 & 1
\end{pmatrix} \sigma_n^{-1}.$ On a alors
\begin{equation}
\sigma_n \begin{pmatrix}
1 & X \\
0 & 1
\end{pmatrix} \begin{pmatrix}
ak & 0 \\
0 & ak
\end{pmatrix} \sigma_n^{-1} = b u_{a^{-1}Xa}\sigma_n diag(k,k) \sigma_n^{-1},
\end{equation}
où $b = diag(a_1, a_1, a_2, a_2, ...)$ et \begin{equation}
\sigma_n \begin{pmatrix}
1 & X' \\
0 & 1
\end{pmatrix} \begin{pmatrix}
a'k' & 0 \\
0 & a'k'
\end{pmatrix} \sigma_n^{-1} = b' u_{{a'}^{-1}X'a'}\sigma_n diag(k' ,k') \sigma_n^{-1},
\end{equation}
où $b' = diag(a_1', a_1', a_2', a_2', ...)$.

On effectue les changements de variables $X \mapsto aXa^{-1}$ et $X' \mapsto a'X'{a'}^{-1}$. D'après le lemme \ref{majtemp} et la preuve du lemme \ref{convtemp}, il existe $d > 0$ tel que pour tout $N \geq 1$, l'intégrale \ref{conv} est alors majorée à une constante près par
\begin{equation}
\begin{split}
\int_{V_n} \int_{A_{n-1}} \int_{V_n} \int_{A_n} (1+|a_n|)^{-N} |\det a(a')^{-1}| m(X)^{-\alpha N} \prod_{i=1}^{n-1} (1 + |\frac{a_i}{a_{i+1}}|)^{-N} \delta^{\frac{1}{2}}_{B_{2n}}(bt_X)\log(||bt_X||)^d \\
m(X')^{-\alpha' N} \prod_{i=1}^{n-1} (1 + |\frac{a'_i}{a'_{i+1}}|)^{-N} \delta^{\frac{1}{2}}_{B_{2n}}(b't_{X'})\log(||b't_{X'}||)^d \delta_{B_n}^{-2}(a) \delta_{B_{n-1}}^{-2}(a') da dX da' dX'.
\end{split}
\end{equation}

Les quantités $m(X)$, $m(X')$, $\alpha$ et $\alpha'$ sont celles que l'on obtient par l'application de la proposition \ref{maj_mX}. On rappelle que $m(X) = sup(1, ||X||)$, où $||X|| = sup_{i,j} |X_{i,j}|$.
On a $\delta^{\frac{1}{2}}_{B_{2n}}(b')\delta_{B_{n-1}}^{-2}(a') = |\det a'|^2$. On en déduit que cette dernière intégrale est majorée (à constante près) par le maximum du produit des intégrales
 \begin{equation}
 \int_{V_n} m(X)^{-\alpha N} \delta^{\frac{1}{2}}_{B_{2n}}(t_X)\log(||t_X||)^{d-j} dX,
 \end{equation}
 
 \begin{equation}
 \int_{V_n} m(X')^{-\alpha' N} \delta^{\frac{1}{2}}_{B_{2n}}(t_{X'})\log(||t_{X'}||)^{d-j'} dX',
 \end{equation}
 
 \begin{equation}
 \int_{A_n}  \prod_{i=1}^{n-1} (1+ |\frac{a_i}{a_{i+1}}|)^{-N} (1+|a_n|)^{-N}\log(||b||)^j|\det a| da,
 \end{equation}
 
 et
 \begin{equation}
 \int_{A_{n-1}}  \prod_{i=1}^{n-2} (1+ |\frac{a_i'}{a_{i+1}'}|)^{-N} (1+|a_{n-1}'|)^{-N}\log(||b'||)^{j'}|\det a'|da',
 \end{equation}
 pour $j, j'$ compris entre $0$ et $d$. Ces dernières intégrales convergent pour $N$ assez grand, voir \cite[proposition 5.5]{jacquet-shalika} pour les deux premières intégrales et le lemme \ref{convergenceAn} pour les deux dernières.

 \end{proof}

 \section{Formules de Plancherel}
 
 \label{plancherel}
 Pour $W \in C^w(N_{2n} \backslash G_{2n}, \psi)$ (voir section \ref{fonctiontemperees}) , on note
\begin{equation}
\label{beta}
\beta(W) = \int_{H^P_n \cap N_{2n} \backslash H^P_n} W(\xi_p) \theta(\xi_p)^{-1} d\xi_p,
\end{equation}
voir la section \ref{formInv} pour la mesure $d\xi_p$.

\begin{lemme}
\label{betann}
L'intégrale \ref{beta} est absolument convergente. La forme linéaire $W \in \mathcal{C}^w(N_{2n} \backslash G_{2n}) \mapsto \beta(W)$ est continue.
\end{lemme}

\begin{proof}
D'après la décomposition d'Iwasawa, $P_n = N_n A_{n-1} K^P_n$, où $K^P_n$ est un sous-groupe compact, il suffit de montrer la convergence de l'intégrale
\begin{equation}
\int_{V_n} \int_{A_{n-1}} \left|W\left(\sigma_n \begin{pmatrix}
1 & X \\
0 & 1
\end{pmatrix} \begin{pmatrix}
ak & 0 \\
0 & ak
\end{pmatrix}\sigma_n^{-1}\right)\right| \delta_{B_{n-1}}(a)^{-1} da dX,
\end{equation}
pour tout $k \in K^P_n$. Par un argument similaire à la preuve du lemme \ref{convtemp}, on obtient la majoration suivante :
\begin{equation}
\begin{split}
\int_{V_n} \int_{A_{n-1}} &\prod_{i=1}^{n-2}(1+\frac{|a_i|}{|a_{i+1}|})^{-N} m(X)^{-\alpha N} \delta_{B_{2n}}(bt_X)^{\frac{1}{2}} \\
&\log(||bt_X||)^d \delta_{B_n}(a)^{-1} \delta_{B_{n-1}}(a)^{-1} da dX,
\end{split}
\end{equation}
pour tout $N \geq 1$. Cette dernière intégrale est convergente pour $N$ suffisamment grand par le même argument que dans la preuve du lemme \ref{convtemp}.
\end{proof}

\begin{proposition}
\label{lemmebeta}
Pour $\pi = T(\sigma)$ avec $\sigma \in Temp(SO(2n+1))$, la restriction de $\beta$ à $\mathcal{W}(\pi, \psi)$ est un élément de $\Hom_{H_n}(\mathcal{W}(\pi, \psi), \theta)$.
\end{proposition}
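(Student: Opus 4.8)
Le plan est de v\'erifier l'\'equivariance $\beta(\rho(h)W)=\theta(h)\beta(W)$ pour $h\in H_n$ et $W\in\mathcal{W}(\pi,\psi)$. Comme $\pi$ est temp\'er\'ee, elle est g\'en\'erique et son mod\`ele de Whittaker est contenu dans $C^w(N_{2n}\backslash G_{2n},\psi)$; le lemme \ref{betann} assure donc que $\beta$ est bien d\'efinie et continue sur $\mathcal{W}(\pi,\psi)$. Le groupe $H_n$ \'etant engendr\'e par les $u_X=\sigma_n\bigl(\begin{smallmatrix}1&X\\0&1\end{smallmatrix}\bigr)\sigma_n^{-1}$, $X\in M_n$, et les $s_g=\sigma_n\bigl(\begin{smallmatrix}g&0\\0&g\end{smallmatrix}\bigr)\sigma_n^{-1}$, $g\in G_n$, et tous les $u_X$ ainsi que les $s_g$ pour $g\in P_n$ appartenant \`a $H^P_n=H_n\cap P_{2n}$, il suffit d'\'etablir (i) l'\'equivariance sous $H^P_n$, puis (ii) l'invariance sous $s_{g_0}$ pour un $g_0$ engendrant $G_n$ avec $P_n$.

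Pour (i), on revient \`a la d\'efinition de $\beta$ et au choix de mesure sur $H^P_n\cap N_{2n}\backslash H^P_n$ de la section \ref{formInv}. Pour $h\in H^P_n$, le changement de variable $\xi_p\mapsto\xi_p h^{-1}$ fait appara\^itre le facteur $\theta(h)$ via $\theta(\xi_p h^{-1})^{-1}=\theta(h)\theta(\xi_p)^{-1}$, le jacobien se contr\^olant cas par cas. Pour $h=s_{g_0}$ avec $g_0\in P_n$, on utilise $s_{g_0}u_Xs_{g_0}^{-1}=u_{g_0Xg_0^{-1}}$ et le changement de variable associ\'e sur $X$. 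Pour $h=u_Y$, la relation $s_gu_Ys_g^{-1}=u_{gYg^{-1}}$ ram\`ene la translation \`a un d\'ecalage $X\mapsto X+gYg^{-1}$ : la composante triangulaire sup\'erieure de ce d\'ecalage est absorb\'ee par la $\psi$-\'equivariance de $W$ et fournit pr\'ecis\'ement le facteur $\theta(u_Y)=\psi(\mathrm{Tr}\,Y)$ (la trace \'etant invariante par conjugaison, $\mathrm{Tr}(gYg^{-1})=\mathrm{Tr}\,Y$), la composante dans $V_n$ disparaissant par invariance de la mesure $dX$.

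Le point central de la preuve est (ii). En posant $F_W(g)=\int_{V_n}W(u_Xs_g)\,dX$, fonction invariante \`a gauche sous $N_n$ comme dans la d\'efinition des int\'egrales $J$, on a $\beta(W)=\int_{N_n\backslash P_n}F_W(g)\,dg$, et l'invariance cherch\'ee \'equivaut \`a ce que $g_0\mapsto\int_{N_n\backslash P_n}F_W(gg_0)\,dg$ soit constante sur $G_n$. Le changement de variable $g\mapsto gg_0^{-1}$ sortant du mirabolique $P_n$, l'\'enonc\'e n'est pas formel : c'est l'obstacle principal. On se propose de calculer la diff\'erence $\int_{N_n\backslash P_n}\bigl(F_W(gg_0)-F_W(g)\bigr)\,dg$ par le m\^eme m\'ecanisme d'inversion de Fourier que dans la preuve de la proposition \ref{unfolding} : on d\'ecompose selon la coordonn\'ee suppl\'ementaire $P_n\backslash G_n\simeq F^n\setminus\{0\}$, on int\`egre la variable unipotente interne contre $\psi$, puis on reconna\^it une formule d'inversion de Fourier annulant le terme de bord. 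La d\'ecroissance rapide des fonctions de Whittaker temp\'er\'ees (lemme \ref{majtemp}), d\'ej\`a utilis\'ee dans le lemme \ref{betann}, garantit la convergence absolue et la l\'egitimit\'e des interversions. L'hypoth\`ese $\pi=T(\sigma)$ intervient de mani\`ere essentielle : elle \'equivaut \`a $\gamma(0,\pi,\Lambda^2,\psi)=0$, c'est-\`a-dire \`a un p\^ole de $L(s,\pi,\Lambda^2)$ en $s=0$ et \`a l'existence d'un mod\`ele de Shalika, et c'est exactement ce comportement au bord dans la direction $P_n\backslash G_n$ qui force l'annulation du terme correctif. Comme le mod\`ele de Shalika est unique, $\Hom_{H_n}(\mathcal{W}(\pi,\psi),\theta)$ est de dimension au plus $1$, de sorte que $\beta$ en est un g\'en\'erateur d\`es qu'elle ne s'annule pas.
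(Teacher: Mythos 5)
Votre r\'eduction en deux \'etapes est raisonnable, et votre point (i) --- l'\'equivariance de $\beta$ sous $H_n^P=H_n\cap P_{2n}$ par changements de variables --- est correct~; c'est d'ailleurs ce que l'article tient pour acquis (cf.\ la remarque qui suit le lemme \ref{zetabeta}). Mais le point (ii), dont vous dites vous-m\^eme qu'il est le point central, n'est pas d\'emontr\'e~: vous \emph{annoncez} que la diff\'erence $\int_{N_n\backslash P_n}\bigl(F_W(gg_0)-F_W(g)\bigr)dg$ s'annule par le m\^eme m\'ecanisme d'inversion de Fourier que dans la proposition \ref{unfolding}, sans produire le calcul. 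Or ce m\'ecanisme ne peut pas suffire~: le d\'epliage de la proposition \ref{unfolding} est purement formel, valable pour toute fonction de Schwartz et ind\'ependant de toute hypoth\`ese sur $\pi$, alors que la conclusion cherch\'ee d\'epend de fa\c{c}on essentielle de l'hypoth\`ese $\pi=T(\sigma)$. L'\'equivariance sous $H_n^P$ et la non-nullit\'e de $\beta$ (le mod\`ele de Kirillov d'une repr\'esentation g\'en\'erique irr\'eductible contient $C^\infty_c(N_{2n}\backslash P_{2n},\psi)$) valent pour \emph{toute} $\pi$ g\'en\'erique temp\'er\'ee~; si (ii) r\'esultait d'une manipulation formelle, toute telle $\pi$ admettrait un mod\`ele de Shalika, ce qui est faux. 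D\'ej\`a pour $n=1$~: on a $H_1=Z_2N_2$, $H_1^P=N_2$, donc $\beta(W)=W(1)$, et l'\'equivariance sous $H_1$ \'equivaut \`a $\omega_\pi=1$, c'est-\`a-dire exactement \`a la condition que $\pi$ soit un transfert de $SO(3)$~; aucune inversion de Fourier ne d\'emontre $W(z)=W(1)$ pour $\omega_\pi\neq 1$. Votre esquisse n'indique nulle part comment la condition $\gamma(0,\pi,\Lambda^2,\psi)=0$ entrerait dans le calcul~: l'affirmation que le p\^ole de $L(s,\pi,\Lambda^2)$ \emph{force l'annulation du terme de bord} est une p\'etition de principe.

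Votre phrase finale ne comble pas ce trou. L'unicit\'e du mod\`ele de Shalika \cite{jacquet-rallis} donne $\dim\Hom_{H_n}(\pi,\theta)\leq 1$, et l'existence du mod\`ele pour $\pi=T(\sigma)$ donne $\Hom_{H_n}(\pi,\theta)\neq 0$~; mais $\beta$ n'est a priori qu'un \'el\'ement de l'espace plus gros $\Hom_{H_n\cap P_{2n}}(\pi,\theta)$, et pour conclure que $\beta$ tombe dans le sous-espace $\Hom_{H_n}(\pi,\theta)$ il faudrait savoir que $\dim\Hom_{H_n\cap P_{2n}}(\pi,\theta)\leq 1$. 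C'est pr\'ecis\'ement l'\'enonc\'e de multiplicit\'e un qui, comme le signale la remarque de l'article, n'est que conjectural~; c'est la raison pour laquelle la preuve de l'article est volontairement d\'etourn\'ee. Elle \'etablit d'abord, via l'\'equation fonctionnelle (proposition \ref{funcloc}), le th\'eor\`eme \ref{egalitegamma}, la limite spectrale (proposition \ref{limitespectrale}) et le lemme de s\'eparation spectrale, que $f\mapsto J(1,\rho(w_{n,n})\widetilde{W_{f,T(\sigma)}(1,\cdot)},\widehat{\phi})$ est $(H_n,\theta)$-\'equivariante --- en traitant toute la famille $\{T(\sigma)\}$ simultan\'ement, et non repr\'esentation par repr\'esentation --- puis identifie cette fonctionnelle \`a $\phi(0)\,\widetilde{\beta}(\rho(w_{n,n})\widetilde{W})$ (lemme \ref{zetabeta}), et en d\'eduit enfin l'\'equivariance de $\beta$ en rempla\c{c}ant $\psi$ par $\psi^{-1}$ et en utilisant $\widetilde{\pi}\simeq\pi$. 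Votre strat\'egie directe bute exactement sur l'obstacle que cette machinerie spectrale sert \`a contourner.
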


La preuve de cette proposition se fera après quelques préliminaires. On commence par prouver un lemme et introduire des notations.

On note $\mathcal{S}(Z_{2n}N_{2n} \backslash G_{2n})$ l'ensemble des fonctions lisse sur $G_{2n}$, $Z_{2n}N_{2n}$-équivariante à gauche et à support compact modulo $Z_{2n}N_{2n}$.
\begin{lemme}
\label{limitezeta}
Pour $W \in \mathcal{S}(Z_{2n} N_{2n} \backslash G_{2n})$ et $\phi \in \mathcal{S}(F^n)$, on a
\begin{equation}
\lim_{s\rightarrow 0^+} \gamma(ns, 1, \psi) J(s, W, \phi) = \phi(0) \int_{Z_{2n}(H_n \cap N_{2n}) \backslash H_n} W(\xi) \theta(\xi)^{-1} d\xi.
\end{equation}
\end{lemme}

\begin{proof}
On a
\begin{equation}
\begin{split}
\gamma(ns, 1, \psi) J(s, W, \phi) &= \int_{A_{n-1}} \int_{K_n} \int_{V_n} W\left(\sigma_n\begin{pmatrix}
1 & X \\
0 & 1
\end{pmatrix} \begin{pmatrix}
ak & 0 \\
0 & ak
\end{pmatrix} \sigma_n^{-1}\right) dX\\
& \gamma(ns, 1, \psi) \int_{Z_n}\phi(e_nzk) |\det z|^s dz dk |\det a|^s \delta_{B_n}(a)^{-1} da
\end{split}
\end{equation}

De plus, d'après la thèse de Tate, on a
\begin{equation}
\gamma(ns, 1, \psi) \int_{Z_n} \phi(e_n zk) |\det z|^s dz = \int_{F} \widehat{\phi_k}(x)|x|^{-ns} dx,
\end{equation}
où l'on a posé $\phi_k(x) = \phi(xe_nk)$ pour tous $x \in F$ et $k \in K_n$. Ce qui nous donne par convergence dominée
\begin{equation}
\lim_{s \rightarrow 0+} \gamma(ns, 1, \psi)\int_{Z_n} \phi(e_nzk) |\det z|^s dz = \int_{F} \widehat{\phi_k}(x)dx = \phi(0).
\end{equation}

D'autre part, $W$ est à support compact modulo $Z_{2n}N_{2n}$ et l'intersection de $Z_{2n}N_{2n}$ avec l'ensemble des matrices de la forme $\sigma_n\begin{pmatrix}
1 & X \\
0 & 1
\end{pmatrix} \begin{pmatrix}
ak & 0 \\
0 & ak
\end{pmatrix} \sigma_n^{-1}$ pour $X \in V_n, a \in A_{n-1}, k \in K_n$ est à support compact. On en déduit que l'intégrale
\begin{equation}
\int_{A_{n-1}} \int_{V_n} W\left(\sigma_n\begin{pmatrix}
1 & X \\
0 & 1
\end{pmatrix} \begin{pmatrix}
ak & 0 \\
0 & ak
\end{pmatrix} \sigma_n^{-1}\right) dX |\det a|^s \delta_{B_n}(a)^{-1} da,
\end{equation}
converge absolument pour tout $k \in K_n$ et tout $s \in \mathbb{C}$.

On en déduit par convergence dominée, que $\lim_{s \rightarrow 0^+}\gamma(ns, 1, \psi) J(s, W, \phi)$ est égal à
\begin{equation}
\phi(0)\int_{Z_n \backslash A_n} \int_{K_n} \int_{V_n}
W\left(\sigma_n\begin{pmatrix}
1 & X \\
0 & 1
\end{pmatrix} \begin{pmatrix}
ak & 0 \\
0 & ak
\end{pmatrix} \sigma_n^{-1}\right)dX  dk \delta_{B_n}(a)^{-1} da,
\end{equation}
ce qui nous permet de conclure.
\end{proof}

Pour tout $g \in G_{2n}$, on pose $||g|| = max(||g||_\infty, ||g^{-1}||_\infty)$ et $\sigma(g) = \log(||g||)$, avec $||g||_\infty = \sup_{i,j} |g_{i,j}|$ où les $g_{i,j}$ sont les coefficients de $g$. On note $\Xi$ la fonction Xi d'Harish-Chandra (voir \cite{waldspurger}).

On introduit l'espace $C^w(G_{2n})$ des fonctions tempérées sur $G_{2n}$ comme l'ensemble des fonctions $f : G_{2n} \rightarrow \mathbb{C}$ qui sont biinvariante par un sous-groupe compact ouvert et vérifient qu'il existe $d \geq 1$ et $C > 0$ tel que
\begin{equation}
\label{ineqTemp}
|f(g)| \leq C \Xi(g) \sigma(g)^d,
\end{equation}
pour tout $g \in G_{2n}$. On note $C^w_{d,K}(G_{2n})$ le sous-espace de $C^w(G_{2n})$ des fonctions qui vérifient \ref{ineqTemp} et qui sont biinvariante par $K$, où $K$ est un sous-groupe compact ouvert de $G_{2n}$. Ce dernier est muni de la norme $\sup_{g \in G_{2n}} \frac{|f(g)|}{\Xi(g) \sigma(g)^d}$ qui en fait un espace de Banach. On munit $C^w(G_{2n}) = \bigcup_{d,K} C^w_{d,K}(G_{2n})$ de la topologie limite inductive.

On étend la forme linéaire $f \in \mathcal{S}(G_{2n}) \mapsto \int_{N_{2n}} f(u)\psi(u)^{-1} du$ par continuité en une forme linéaire sur $C^w(G_{2n})$ \cite{beuzart-plessis}, que l'on note
\begin{equation}
f \in C^w(G_{2n}) \mapsto \int_{N_{2n}}^* f(u)\psi(u)^{-1} du.
\end{equation}

Pour $f \in C^w(G_{2n})$, on peut ainsi définir $W_f$ par la formule
\begin{equation}
W_f(g_1, g_2) = \int_{N_{2n}}^* f(g_1^{-1}ug_2)\psi(u)^{-1} du,
\end{equation}
pour tous $g_1, g_2 \in G_{2n}$.

Soient $f \in \mathcal{S}(G_{2n})$ et $\pi \in Temp(G_{2n})$, on pose $W_{f, \pi} = W_{f_\pi}$ où $f_\pi(g) = Tr(\pi(g)\pi(f^\vee))$ pour tout $g \in G_{2n}$.

On introduit l'espace $C^w(N_{2n} \times N_{2n} \backslash G_{2n} \times G_{2n}, \psi \otimes \psi ^{-1})$ de manière analogue à l'espace des fonctions tempérées sur $N_{2n} \backslash G_{2n}$ dans la section \ref{fonctiontemperees}. On a
\begin{equation}
C^w(N_{2n} \times N_{2n} \backslash G_{2n} \times G_{2n}, \psi \otimes \psi ^{-1}) = C^w(N_{2n} \backslash G_{2n}, \psi) \widehat{\otimes} C^w(N_{2n}  \backslash G_{2n}, \psi ^{-1}),
\end{equation}
où $\widehat{\otimes}$ est le produit tensoriel complété \cite[A.5]{bpannexe}.

\begin{proposition}[Beuzart-Plessis {\cite[Proposition 2.14.2]{beuzart-plessis}}]
\label{decspecwhitt}
Soit $f \in \mathcal{S}(G_{2n})$. On pose $\widetilde{f}(g) = \int_{Z_n} f(zg) dz$, alors $\widetilde{f} \in \mathcal{S}(PG_{2n})$. On a $\widetilde{f}_\pi = f_\pi$ pour tout $\pi \in Temp(PG_{2n})$. De plus, on a l'égalité
\begin{equation}
W_{\widetilde{f}} = \int_{Temp(PG_{2n})} W_{f, \pi} d\mu_{PG_{2n}}(\pi)
\end{equation}
dans $C^w(N_{2n} \times N_{2n} \backslash G_{2n} \times G_{2n}, \psi \otimes \psi ^{-1})$. On renvoie à \cite[Annexe A.2]{bpannexe} pour la définition de l'intégrale à valeur vectorielle.
\end{proposition}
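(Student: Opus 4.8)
La premi\`ere assertion est une propri\'et\'e g\'en\'erale de l'int\'egration sur le centre : le centre $Z_{2n}$ \'etant un sous-groupe ferm\'e de $G_{2n}$, la moyennisation $f \mapsto \widetilde{f}$ envoie $\mathcal{S}(G_{2n})$ dans $\mathcal{S}(PG_{2n})$. Pour l'\'egalit\'e $\widetilde{f}_\pi = f_\pi$, on observe que pour $\pi \in Temp(PG_{2n})$ le caract\`ere central de $\pi$ est trivial ; le changement de variable $g \mapsto zg$ donne alors
\begin{equation}
\pi(\widetilde{f}) = \int_{PG_{2n}} \int_{Z_{2n}} f(zg) \pi(g)\, dz\, dg = \int_{G_{2n}} f(g) \pi(g)\, dg = \pi(f).
\end{equation}
Comme $(\widetilde{f})^\vee = \widetilde{f^\vee}$, on en d\'eduit $\pi(\widetilde{f}^\vee) = \pi(f^\vee)$, d'o\`u $\widetilde{f}_\pi(g) = Tr(\pi(g)\pi(\widetilde{f}^\vee)) = Tr(\pi(g)\pi(f^\vee)) = f_\pi(g)$ pour tout $g$.

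Il reste \`a prouver l'identit\'e spectrale. L'id\'ee est d'interpr\'eter la formule de Plancherel de la proposition \ref{propPlanch}, appliqu\'ee \`a $\widetilde{f} \in \mathcal{S}(PG_{2n})$, non pas ponctuellement mais comme une \'egalit\'e d'int\'egrales \`a valeurs vectorielles
\begin{equation}
\widetilde{f} = \int_{Temp(PG_{2n})} \widetilde{f}_\pi\, d\mu_{PG_{2n}}(\pi) = \int_{Temp(PG_{2n})} f_\pi\, d\mu_{PG_{2n}}(\pi)
\end{equation}
dans l'espace $C^w(PG_{2n})$ des fonctions temp\'er\'ees. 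On applique ensuite \`a cette \'egalit\'e l'application lin\'eaire continue $\Phi \mapsto W_\Phi$ d\'efinie par $W_\Phi(g_1,g_2) = \int_{N_{2n}}^* \Phi(g_1^{-1} u g_2) \psi(u)^{-1}\, du$. Une application lin\'eaire continue commutant avec les int\'egrales \`a valeurs vectorielles \cite[Annexe A.2]{bpannexe}, on obtient directement
\begin{equation}
W_{\widetilde{f}} = \int_{Temp(PG_{2n})} W_{f_\pi}\, d\mu_{PG_{2n}}(\pi) = \int_{Temp(PG_{2n})} W_{f,\pi}\, d\mu_{PG_{2n}}(\pi).
\end{equation}

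Les points techniques \`a \'etablir sont alors les suivants. D'abord, la forme lin\'eaire $\Phi \mapsto W_\Phi$ est bien continue vers $C^w(N_{2n} \times N_{2n} \backslash G_{2n} \times G_{2n}, \psi \otimes \psi^{-1})$ : cela repose sur le prolongement continu de l'int\'egrale de Whittaker r\'egularis\'ee $\int_{N_{2n}}^*$ aux fonctions temp\'er\'ees \cite{beuzart-plessis}, et sur l'identification de l'espace but au produit tensoriel compl\'et\'e. Ensuite, et c'est le point central de la preuve, il faut montrer que la d\'ecomposition de Plancherel converge effectivement dans la topologie de $C^w(PG_{2n})$, et pas seulement ponctuellement comme dans la proposition \ref{propPlanch}.

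C'est ce passage du ponctuel au topologique qui constitue le principal obstacle. Il demande de v\'erifier que l'application $\pi \mapsto f_\pi$ est une fonction continue \`a valeurs dans $C^w(PG_{2n})$, int\'egrable pour la mesure $d\mu_{PG_{2n}}$ sur l'espace \emph{non compact} $Temp(PG_{2n})$, ce qui rel\`eve de la th\'eorie des paquets d'ondes et des estim\'ees d'Harish-Chandra \cite{waldspurger} : il s'agit de contr\^oler la d\'ecroissance et la r\'egularit\'e des $f_\pi$ dans les semi-normes de $C^w$ afin que l'int\'egrale vectorielle converge dans cet espace et soit \'egale \`a $\widetilde{f}$. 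Une fois ce point acquis, la commutation avec l'application continue $\Phi \mapsto W_\Phi$ est formelle et fournit l'\'egalit\'e annonc\'ee.
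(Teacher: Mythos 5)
Le papier ne d\'emontre pas cette proposition : elle est reprise telle quelle de Beuzart-Plessis \cite[Proposition 2.14.2]{beuzart-plessis}, si bien qu'il n'existe pas de preuve interne au texte \`a laquelle comparer votre tentative ; la seule r\'ef\'erence est la source cit\'ee. Cela dit, votre r\'edaction des deux premi\`eres assertions est correcte et compl\`ete : la descente de $\widetilde{f}$ \`a $\mathcal{S}(PG_{2n})$, l'identit\'e $(\widetilde{f})^\vee = \widetilde{f^\vee}$ et l'\'egalit\'e $\pi(\widetilde{f}) = \pi(f)$ par trivialit\'e du caract\`ere central sont des v\'erifications de routine que vous menez \`a bien, et l'architecture g\'en\'erale que vous proposez pour la troisi\`eme assertion (appliquer une application lin\'eaire continue \`a une int\'egrale \`a valeurs vectorielles) est bien celle qui sous-tend la preuve de la source.

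En revanche, pour l'identit\'e spectrale elle-m\^eme, ce que vous \'ecrivez est un plan de preuve et non une preuve : les deux points techniques que vous isolez --- (a) la continuit\'e de $\Phi \mapsto W_\Phi$ \`a valeurs dans $C^w(N_{2n} \times N_{2n} \backslash G_{2n} \times G_{2n}, \psi \otimes \psi^{-1})$, et surtout (b) le fait que la d\'ecomposition de Plancherel, qui n'est \'enonc\'ee dans la proposition \ref{propPlanch} que ponctuellement, converge comme int\'egrale \`a valeurs vectorielles dans $C^w(PG_{2n})$ --- constituent pr\'ecis\'ement le contenu math\'ematique de la proposition, et vous n'\'etablissez ni l'un ni l'autre. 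Le point (b) ne se r\'eduit pas \`a une invocation de la th\'eorie des paquets d'ondes : il faut montrer que $\pi \mapsto f_\pi$ est continue \`a valeurs dans $C^w(PG_{2n})$, majorer $|f_\pi(g)|$ par $\Xi(g)\sigma(g)^d$ uniform\'ement en $\pi$, puis v\'erifier que les semi-normes correspondantes sont int\'egrables contre $d\mu_{PG_{2n}}$ sur l'espace non compact $Temp(PG_{2n})$ --- c'est exactement l'analyse fine (estim\'ees de fonctions de Whittaker, paquets d'ondes, th\'eor\`eme de Paley--Wiener matriciel) qui occupe une grande partie de \cite{beuzart-plessis} en s'appuyant sur \cite{waldspurger}. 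Une fois (a) et (b) acquis, la commutation avec l'int\'egrale vectorielle est formelle, comme vous le dites ; mais admettre (a) et (b) revient pratiquement \`a admettre la proposition elle-m\^eme, de sorte que votre argument ne fait que d\'eplacer la difficult\'e sans la r\'esoudre.
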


\begin{lemme}
\label{choixf}
Soit $W \in \mathcal{W}(\pi, \psi)$, alors il existe $f \in \mathcal{S}(G_{2n})$ tel que $W_{f, \pi}(1,.) = W$.
\end{lemme}

\begin{proof}
On a
\begin{equation}
W_{f, \pi}(1, .) = \int^*_{N_{2n}} f_\pi(u.) \psi(u)^{-1} du.
\end{equation}

D'autre part, soit $f \in \mathcal{S}(G_{2n})$ alors $f$ est bi-invariante par un sous-groupe ouvert compact $K$. On a une décomposition $V_{\pi} = V_{\pi}^K \oplus V_\pi(K)$, où $V_\pi^K$ est l'espace des vecteurs $K$-invariants. Comme $\pi$ est admissible, $V_\pi^K$ est de dimension finie. On note $\mathcal{B}_\pi^K$ une base de cet espace. Alors pour tout $g \in G_{2n}$, on a $f_\pi(g) = Tr(\pi(g) \pi(f^\vee)) = \sum_{v \in \mathcal{B}_\pi^K} <\pi(g)\pi(f^\vee)v, v^\vee>$, où $(v^\vee)_{v \in \mathcal{B}_\pi^K}$ est la base duale de $\mathcal{B}_\pi^K$. On en déduit que $f_\pi$ est une somme (finie) de coefficient matriciel.

On note $Coeff^K = \{g \mapsto <\pi(g)v, \widetilde{v}>, v \in V_\pi^K, \widetilde{v} \in V_{\widetilde{\pi}}^K\}$.
Alors toute combinaison linéaire de $Coeff^K$ est de la forme $f_\pi$ avec $f \in \mathcal{S}(G_{2n}, K)$, où l'on a noté $\mathcal{S}(G_{2n}, K)$ le sous espace de $\mathcal{S}(G_{2n}, K)$ des fonctions bi-invariante par $K$. En effet, $f \in \mathcal{S}(G_{2n}, K) \mapsto \pi(f^\vee) \in End(V_\pi^K)$ est surjective. La surjectivité est une conséquence du lemme de Burnside et du fait que $V_\pi^K$ est un $\mathcal{S}(G_{2n}, K)$-module irréductible de dimension finie. Par adjonction, on a $End(V_\pi^K) \simeq \pi^K \boxtimes \widetilde{\pi}^K$, d'où le résultat.

Pour montrer le lemme, il nous faut montrer qu'il existe un coefficient matriciel $c = <\pi(.)v, \widetilde{v}>$ tel que $W = \int^*_{N_{2n}} c(u.) \psi(u)^{-1}du$. Or
\begin{equation}
v \mapsto \int^*_{N_{2n}} c(u.) \psi(u)^{-1} du = \int^*_{N_{2n}} <\pi(u.)v, \widetilde{v}> \psi(u)^{-1} du
\end{equation}
est une fonctionnelle de Whittaker. Il suffit donc de montrer que l'on peut choisir $\widetilde{v}$ pour que cette fonctionnelle soit non nulle.
C'est le contenu de \cite[Théorème 6.4.1]{sak-ven}.
\end{proof}

Pour $W \in C^w(N_{2n} \backslash G_{2n}, \psi^{-1})$, on note
\begin{equation}
\widetilde{\beta}(W) = \int_{H_n^P \cap N_{2n} \backslash H_n^P} W(\xi_p) \theta(\xi_p) d\xi_p.
\end{equation}
On dispose du lemme \ref{betann} pour la forme linéaire $\widetilde{\beta}$ avec la même preuve.

Pour $\sigma \in Temp(SO(2n+1))$, on pose $\pi = T(\sigma)$. On montre dans la proposition \ref{constbeta} qu'il existe un signe $c_\beta(\sigma)$ tel que
\begin{equation}
\widetilde{\beta}(\rho(w_{n,n})\widetilde{W}) = c_\beta(\sigma) \beta(W),
\end{equation}
pour tout $W \in \mathcal{W}(\pi, \psi)$.

\begin{corollaire}[de la limite spectrale]
\label{corolim}
Soient $f \in \mathcal{S}(G_{2n})$ et $g \in G_{2n}$, alors
\begin{equation}
\begin{split}
\int_{H_n \cap N_{2n} \backslash H_n} W_f(g, \xi) \theta(\xi)^{-1} d\xi = &\int_{Temp(SO(2n+1))/Stab} \beta(W_{f,T(\sigma)}(g,.)) \\
&\frac{\gamma^*(0, \sigma, Ad, \psi)}{|S_\sigma|} c(T(\sigma))^{-1} c_\beta(\sigma) d\sigma,
\end{split}
\end{equation}
où $c(T(\sigma))$ est la constante qui provient du théorème \ref{egalitegamma}.
\end{corollaire}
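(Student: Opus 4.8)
Le plan est de ramener le membre de gauche à une limite d'intégrales zêta de Jacquet--Shalika, d'y injecter la décomposition spectrale de Whittaker, puis de reconnaître la limite spectrale de la proposition \ref{limitespectrale}. On fixerait d'abord $\phi \in \mathcal{S}(F^n)$ avec $\phi(0) = 1$ et on éliminerait le centre : comme $Z_{2n} \subset H_n$ et que $\theta$ est trivial sur $Z_{2n}$, en posant $\widetilde{f}(g') = \int_{Z_{2n}} f(zg')\,dz \in \mathcal{S}(PG_{2n})$ on a $W_{\widetilde{f}}(g,\xi) = \int_{Z_{2n}} W_f(g,z\xi)\,dz$, d'où
\begin{equation}
\int_{H_n \cap N_{2n} \backslash H_n} W_f(g,\xi)\theta(\xi)^{-1}\,d\xi = \int_{Z_{2n}(H_n \cap N_{2n}) \backslash H_n} W_{\widetilde{f}}(g,\xi)\theta(\xi)^{-1}\,d\xi.
\end{equation}
Puisque $W_{\widetilde{f}}(g,\cdot) \in \mathcal{S}(Z_{2n}N_{2n} \backslash G_{2n})$, le lemme \ref{limitezeta} (avec $\phi(0) = 1$) identifierait ce terme à $\lim_{s \to 0^+} \gamma(ns,1,\psi)\, J(s, W_{\widetilde{f}}(g,\cdot), \phi)$.

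On injecterait ensuite la décomposition spectrale $W_{\widetilde{f}}(g,\cdot) = \int_{Temp(PG_{2n})} W_{f,\pi}(g,\cdot)\, d\mu_{PG_{2n}}(\pi)$ (proposition \ref{decspecwhitt}), l'échange avec l'intégrale spectrale étant légitimé par la continuité de $W \mapsto J(s,W,\phi)$ (lemme \ref{convtemp}). Pour $\pi$ tempérée, $W_{f,\pi}(g,\cdot) \in \mathcal{W}(\pi,\psi)$, et l'équation fonctionnelle de Matringe (proposition \ref{funcloc}) jointe au théorème \ref{egalitegamma} donnerait
\begin{equation}
J(s, W_{f,\pi}(g,\cdot), \phi) = c(\pi)^{-1} \gamma(s,\pi,\Lambda^2,\psi)^{-1}\, J(1-s, \rho(w_{n,n})\widetilde{W_{f,\pi}(g,\cdot)}, \widehat{\phi}).
\end{equation}
La fonction $\rho(w_{n,n})\widetilde{W_{f,\pi}(g,\cdot)}$ étant tempérée, le facteur $J(1-s,\dots)$ converge absolument et dépend continûment de $s$ jusqu'en $s=0$ (lemme \ref{convtemp}), de sorte que tous les pôles pertinents sont portés par $\gamma(s,\pi,\Lambda^2,\psi)^{-1}$. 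On serait alors exactement dans le cadre de la proposition \ref{limitespectrale}, à la nuance près que le coefficient $\pi \mapsto c(\pi)^{-1} J(1-s, \rho(w_{n,n})\widetilde{W_{f,\pi}(g,\cdot)}, \widehat{\phi})$ dépend de $s$.

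En reprenant l'argument de Banach--Steinhaus de la preuve de la proposition \ref{limitespectrale} (et en notant que $\gamma(ns,1,\psi) \sim n\gamma(s,1,\psi)$ lorsque $s \to 0^+$), on passerait à la limite en remplaçant ce coefficient par sa valeur en $s=0$, ce qui donnerait
\begin{equation}
\int_{Temp(SO(2n+1))/Stab} c(T(\sigma))^{-1}\, J(1, \rho(w_{n,n})\widetilde{W_{f,T(\sigma)}(g,\cdot)}, \widehat{\phi})\, \frac{\gamma^*(0,\sigma,Ad,\psi)}{|S_\sigma|}\, d\sigma.
\end{equation}
Il resterait à identifier $J(1, \rho(w_{n,n})\widetilde{W}, \widehat{\phi})$ à $\beta(W)$ pour $W = W_{f,T(\sigma)}(g,\cdot)$ : l'analogue en $s=1$ du lemme \ref{limitezeta} exprime cette valeur spéciale comme la période mirabolique $\phi(0)\widetilde{\beta}(\rho(w_{n,n})\widetilde{W})$ (c'est essentiellement le lemme \ref{zetabeta}), et la proposition \ref{constbeta} fournit $\widetilde{\beta}(\rho(w_{n,n})\widetilde{W}) = c_\beta(\sigma)\beta(W)$ ; avec $\phi(0) = 1$ on conclut.

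Le point le plus délicat sera la justification rigoureuse de l'échange limite/intégrale spectrale et de l'application de la proposition \ref{limitespectrale} à la famille dépendant de $s$ : il faudra vérifier que le coefficient spectral $\pi \mapsto c(\pi)^{-1} J(1-s, \rho(w_{n,n})\widetilde{W_{f,\pi}(g,\cdot)}, \widehat{\phi})$ est, pour chaque $s$, une fonction régulière à support compact sur $Temp(PG_{2n})$ --- en se ramenant à un nombre fini de composantes par partition de l'unité, la régularité provenant de ce que $\pi \mapsto f_\pi$ l'est --- et qu'il dépend continûment de $s$ jusqu'en $s=0$, afin de pouvoir invoquer le théorème de Banach--Steinhaus exactement comme dans la preuve de la proposition \ref{beuzart-plessis}.
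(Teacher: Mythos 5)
Votre proposition suit, pour sa première moitié, exactement la route du papier : élimination du centre, lemme \ref{limitezeta}, décomposition spectrale de Whittaker (proposition \ref{decspecwhitt}), équation fonctionnelle de Matringe combinée au théorème \ref{egalitegamma}, puis proposition \ref{limitespectrale} pour aboutir à la relation intermédiaire
\begin{equation*}
\int_{H_n \cap N_{2n} \backslash H_n} W_f(g,\xi)\theta(\xi)^{-1}d\xi
= \int_{Temp(SO(2n+1))/Stab} J\bigl(1, \rho(w_{n,n})\widetilde{W_{f,T(\sigma)}(g,\cdot)}, \widehat{\phi}\bigr)\, c(T(\sigma))^{-1}\, \frac{\gamma^*(0,\sigma,Ad,\psi)}{|S_\sigma|}\, d\sigma,
\end{equation*}
et vous êtes même plus explicite que le papier sur la nécessité de traiter la dépendance en $s$ du coefficient spectral par l'argument de Banach--Steinhaus (le papier ne le justifie qu'implicitement, via la preuve de la proposition \ref{beuzart-plessis}).

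En revanche, il y a une lacune réelle dans votre dernière étape. Vous affirmez que \og l'analogue en $s=1$ du lemme \ref{limitezeta}\fg{} donne $J(1, \rho(w_{n,n})\widetilde{W}, \widehat{\phi}) = \phi(0)\widetilde{\beta}(\rho(w_{n,n})\widetilde{W})$. Mais ce calcul (dépliage de $N_n\backslash G_n$ en $N_n \backslash P_n$ et $P_n \backslash G_n$) fait apparaître, à l'intérieur de l'intégrale sur $P_n\backslash G_n$, le terme $\widetilde{\beta}\bigl(\rho(\sigma_n\,\mathrm{diag}(h,h)\,\sigma_n^{-1})\rho(w_{n,n})\widetilde{W}\bigr)$ ; pour le sortir de l'intégrale il faut l'équivariance de $\widetilde{\beta}$ sous \emph{tout} le groupe $H_n$ (et pas seulement sous sa partie mirabolique $H_n^P$, qui est la seule automatique par définition de $\widetilde{\beta}$). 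Cette équivariance n'est nullement un fait indépendant : dans le papier, elle est \emph{déduite de la relation spectrale elle-même}, en remplaçant $f$ par $\rho(h)f$ dans la relation \ref{relspec}, en invoquant le lemme de séparation spectrale \cite[Lemme 5.7.2]{beuzart-plessis} et la continuité en $\sigma$ pour obtenir l'équivariance de $f \mapsto J(1, \rho(w_{n,n})\widetilde{W_{f,T(\sigma)}}, \widehat{\phi})$ pour chaque $\sigma$, puis en utilisant le lemme \ref{choixf} et l'arbitrarité de $\widehat{\phi}$ parmi les fonctions $P_n$-invariantes pour la transférer à $\widetilde{\beta}$ (c'est aussi ce qui prouve la proposition \ref{lemmebeta}, et la proposition \ref{constbeta} que vous citez présuppose elle-même cette équivariance). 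Sans cette boucle --- c'est précisément l'argument \og détourné \fg{} signalé dans la remarque du papier --- ni votre identité en $s=1$ ni l'application de la proposition \ref{constbeta} ne sont disponibles, et la preuve ne se referme pas.
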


\begin{proof}
On peut supposer que $g = 1$ en remplaçant $f$ par $L(g)f$. On pose $\widetilde{f}(g) = \int_{Z_n} f(zg) dz$, alors $\widetilde{f} \in \mathcal{S}(PG_{2n})$. On a donc
\begin{equation}
\int_{H_n \cap N_{2n} \backslash H_n} W_f(1, \xi) \theta(\xi)^{-1} d\xi = \int_{Z_{2n}(H_n \cap N_{2n}) \backslash H_n} W_{\widetilde{f}}(1, \xi) \theta(\xi)^{-1} d\xi.
\end{equation}

On choisit $\phi \in \mathcal{S}(F^n)$ tel que $\phi(0) = 1$. D'après le lemme \ref{limitezeta}, la proposition \ref{decspecwhitt} et le lemme \ref{convtemp}, on a
\begin{equation}
\begin{split}
\int_{Z_{2n}(H_n \cap N_{2n}) \backslash H_n} W_{\widetilde{f}}(1, \xi) \theta(\xi)^{-1} d\xi &= \lim_{s\rightarrow 0^+} n\gamma(s, 1, \psi) J(s, W_{\widetilde{f}}(1, .), \phi) \\
&= \lim_{s\rightarrow 0^+} n\gamma(s, 1, \psi) \int_{Temp(PG_{2n})}J(s, W_{f, \pi}(1, .), \phi) d\mu_{PG_{2n}}(\pi).
\end{split}
\end{equation}

D'après l'équation fonctionnelle \ref{funcloc} et le théorème \ref{egalitegamma}, on a
\begin{equation}
\begin{split}
&\int_{H_n \cap N_{2n} \backslash H_n} W_f(1, \xi) \theta(\xi)^{-1} d\xi = \\
&\lim_{s\rightarrow 0^+} n\gamma(s, 1, \psi) \int_{Temp(PG_{2n})}J(1-s, \rho(w_{n,n})\widetilde{W_{f, \pi}(1, .)}, \widehat{\phi}) c(\pi)^{-1} \gamma(s, \pi, \Lambda^2, \psi)^{-1} d\mu_{PG_{2n}}(\pi).
\end{split}
\end{equation}

La proposition \ref{limitespectrale}, nous permet d'obtenir la relation
\begin{equation}
\label{relspec}
\begin{split}
&\int_{H_n \cap N_{2n} \backslash H_n} W_f(1, \xi) \theta(\xi)^{-1} d\xi =\\
&\int_{Temp(SO(2n+1)/Stab} J(1, \rho(w_{n,n})\widetilde{W_{f, T(\sigma)}(1,.)}, \widehat{\phi}) c(T(\sigma))^{-1} \frac{\gamma^*(0, \sigma, Ad, \psi)}{|S_\sigma|} d\sigma.
\end{split}
\end{equation}

En remplaçant $f$ par $\rho(h)f$, $h \in H_n$, dans le membre de gauche; cela revient à multiplier par $\theta(h)$. On en déduit la même relation pour le membre de droite. Ce qui signifie que
\begin{equation}
\int_{Temp(SO(2n+1)/Stab} J(1, \rho(w_{n,n})\widetilde{W_{\rho(\xi)f, T(\sigma)}(1,.)}, \widehat{\phi})-\theta(\xi)J(1, \rho(w_{n,n})\widetilde{W_{f, T(\sigma)}(1,.)}, \widehat{\phi}) d\mu(\sigma) = 0,
\end{equation}
pour tout $\xi \in H_n$, où $d\mu(\sigma) = c(T(\sigma))^{-1} \frac{\gamma^*(0, \sigma, Ad, \psi)}{|S_\sigma|} d\sigma$.

D'après le lemme de séparation spectrale \cite[Lemme 5.7.2]{beuzart-plessis} et la continuité de $\sigma \mapsto J(1, \rho(w_{n,n})\widetilde{W_{f, T(\sigma)}}(1, .), \widehat{\phi})$, on en déduit que
$J(1, \rho(w_{n,n})\widetilde{W_{\rho(\xi)f, T(\sigma)}(1,.)}, \widehat{\phi}) = \theta(\xi)J(1, \rho(w_{n,n})\widetilde{W_{f, T(\sigma)}(1,.)}, \widehat{\phi})$ pour tous $\xi \in H_n$, $\sigma \in Temp(SO(2n+1)$ et donc que $f \mapsto J(1, \rho(w_{n,n})\widetilde{W_{f, T(\sigma)}(1,.)}, \widehat{\phi})$ est $(H_n, \theta)$-équivariante.

\begin{lemme}
\label{zetabeta}
Soient $\sigma \in Temp(SO(2n+1))$ et $\pi = T(\sigma)$. Alors
\begin{equation}
J(1, \rho(w_{n,n})\widetilde{W}, \widehat{\phi}) = \phi(0)c_\beta(\sigma)\beta(W),
\end{equation}
pour tous $W \in \mathcal{W}(\pi, \psi)$ et $\phi \in \mathcal{S}(F^n)$. Rappelons que $c_{\beta}(\sigma)$ est le signe qui vérifie la relation
\begin{equation}
\widetilde{\beta}(\rho(w_{n,n})\widetilde{W}) = c_\beta(\sigma) \beta(W),
\end{equation}
pour tous $W \in \mathcal{W}(\pi, \psi)$.
\end{lemme}

\begin{proof}
En effet, soit $W \in \mathcal{W}(\pi, \psi)$, on a
\begin{equation}
\begin{split}
J(1, \widetilde{W}, \widehat{\phi}) = \int_{N_n \backslash G_n} \int_{V_n} &\widetilde{W}\left(\sigma_n\begin{pmatrix}
1 & X \\
0 & 1
\end{pmatrix} \begin{pmatrix}
g & 0 \\
0 & g
\end{pmatrix} \sigma_n^{-1}\right)dX \\
& \widehat{\phi}(e_ng) |\det g| dg.
\end{split}
\end{equation}

D'après le lemme \ref{choixf}, on choisit $f \in \mathcal{S}(G_{2n})$ tel que $W_{f,\pi}(1,.) = W$. D'après ce que l'on vient de dire précédemment, on en déduit que $J(1, \rho(w_{n,n})\widetilde{W}, \widehat{\phi})$ vérifie la relation $J(1, \rho(w_{n,n})\widetilde{\rho(\xi)W}, \widehat{\phi}) = \theta(\xi)J(1, \rho(w_{n,n})\widetilde{W}, \widehat{\phi})$, ou encore $J(1, \rho(\xi)\rho(w_{n,n})\widetilde{W}, \widehat{\phi}) = \theta(\xi)^{-1}J(1, \rho(w_{n,n})\widetilde{W}, \widehat{\phi})$, pour tout $\xi \in H_n$.

Comme $\widehat{\phi}(e_ng)$ est arbitraire parmi les fonctions invariante à gauche par $P_n$, on en déduit que
\begin{equation}
\begin{split}
\widetilde{W} \in \mathcal{W}(\widetilde{\pi}, \psi^{-1}) \mapsto &\int_{N_n \backslash P_n} \int_{V_n} \widetilde{W}\left(\sigma_n\begin{pmatrix}
1 & X \\
0 & 1
\end{pmatrix} \begin{pmatrix}
g & 0 \\
0 & g
\end{pmatrix} \sigma_n^{-1}\right) dX dg \\
& = \widetilde{\beta}(\widetilde{W})
\end{split}
\end{equation}
est $(H_n, \theta^{-1})$-équivariante. Ce qui nous permet d'en déduire que $\widetilde{\beta}$ restreint à $\mathcal{W}(\widetilde{\pi}, \psi^{-1})$ est $(H_n, \theta^{-1})$-équivariante. En remplaçant $\psi$ par $\psi^{-1}$ et comme $\widetilde{\pi} \simeq \pi$ d'après la caractérisation \ref{caracTransf}, on en déduit que $\beta$ restreint à $\mathcal{W}(\pi, \psi)$ est $(H_n, \theta)$-équivariante, ce qui termine la preuve de la proposition \ref{lemmebeta}.

\begin{remarque}
Cette preuve que $\beta$ restreint à $\mathcal{W}(\pi, \psi)$ est $(H_n, \theta)$-équivariante est quelque peu détournée dû au fait qu'il nous manque un résultat. On conjecture que, pour $\pi$ tempérée, $Hom_{H_n \cap P_{2n}}(\pi, \theta)$ est de dimension au plus $1$. Comme $\pi = T(\sigma)$, d'après \ref{caracTransf}, la classification des séries discrètes admettant un modèle de Shalika \cite{matringe3} et le passage à l'induction parabolique \cite{matringe2}, on obtient que $\pi$ admet un modèle de Shalika. Autrement dit, on a $Hom_{H_n}(\pi, \theta) \neq 0$. Ce dernier est un sous-espace de $Hom_{H_n \cap P_{2n}}(\pi, \theta)$. On en déduirait alors que la restriction de $\beta$ à $\mathcal{W}(\pi, \psi)$, qui est bien $(H_n \cap P_{2n}, \theta)$-équivariante, est un élément de $Hom_{H_n}(\pi, \theta)$. Ce qui simplifierait légèrement la preuve à condition de prouver le résultat de dimension $1$.
\end{remarque}

\begin{proposition}
\label{constbeta}
Soit $\sigma \in Temp(SO(2n+1))$, on pose $\pi = T(\sigma)$ le transfert de $\sigma$ dans $Temp(G_{2n})$. La forme linéaire $W \in \mathcal{W}(\pi, \psi) \mapsto \widetilde{\beta}(\rho(w_{n,n})\widetilde{W})$ est un élément de $\Hom_{H_n}(\mathcal{W}(\pi, \psi), \theta)$. Il existe un signe $c_\beta(\sigma) = c_\beta(\pi)$ tel que
\begin{equation}
\label{eqconstbeta}
\widetilde{\beta}(\rho(w_{n,n})\widetilde{W}) = c_\beta(\sigma)\beta(W),
\end{equation}
pour tout $W \in \mathcal{W}(\pi, \psi)$.
\end{proposition}

\begin{proof}

On a montré que $\widetilde{W} \in \mathcal{W}(\widetilde{\pi}, \psi^{-1}) \mapsto \widetilde{\beta}(\widetilde{W})$ est $(H_n, \theta^{-1})$-équivariante. L'isomorphisme $W \in \mathcal{W}(\pi, \psi) \mapsto \rho(w_{n,n})\widetilde{W} \in \mathcal{W}(\widetilde{\pi}, \psi^{-1})$ transforme le caractère $\theta$ en $\theta^{-1}$.
On en déduit que $W \in \mathcal{W}(\pi, \psi) \mapsto \widetilde{\beta}(\rho(w_{n,n})\widetilde{W})$ est $(H_n, \theta)$-équivariante.

Montrons maintenant l'égalité \ref{eqconstbeta}. En effet, $\Hom_{H_n}(\pi, \theta)$ est de dimension au plus 1, d'après l'unicité du modèle de Shalika \cite{jacquet-rallis}. On en déduit l'existence de $c_\beta(\pi) \in \mathbb{C}$ vérifiant $\widetilde{\beta}(\rho(w_{n,n})\widetilde{W}) = c_\beta(\sigma)\beta(W)$
pour tout $W \in \mathcal{W}(\pi, \psi)$. En remplaçant $\psi$ par $\psi^{-1}$ l'équation \ref{eqconstbeta} devient $\beta(\rho(w_{n,n})\widetilde{W}) = c_\beta(\pi) \widetilde{\beta}(W)$ pour tout $W \in \mathcal{W}(\pi, \psi^{-1})$. L'isomorphisme $W \in \mathcal{W}(\pi, \psi^{-1}) \mapsto \rho(w_{n,n})\widetilde{W} \in \mathcal{W}(\widetilde{\pi}, \psi^{-1})$ donne alors $\beta(W) = c_\beta(\pi) \widetilde{\beta}(\rho(w_{n,n})\widetilde{W})$. Comme $\pi \simeq \widetilde{\pi}$, on identifie $\mathcal{W}(\pi, \psi)$ et $\mathcal{W}(\widetilde{\pi}, \psi)$. On en déduit que $\widetilde{\beta}(\rho(w_{n,n})\widetilde{W}) = c_\beta(\pi)^2 \widetilde{\beta}(\rho(w_{n,n})\widetilde{W})$. Si $\widetilde{\beta}$ est nulle sur $\mathcal{W}(\widetilde{\pi}, \psi^{-1})$ le résultat est vrai, sinon $c_\beta(\pi)$ est un signe d'après la relation précédente.
\end{proof}

Finissons la preuve du lemme \ref{zetabeta}, on remarque que l'on a
\begin{equation}
\begin{split}
&\int_{N_n \backslash G_n} \int_{V_n} \widetilde{W}\left(\sigma_n\begin{pmatrix}
1 & X \\
0 & 1
\end{pmatrix} \begin{pmatrix}
g & 0 \\
0 & g
\end{pmatrix} \sigma_n^{-1}\right) dX \widehat{\phi}(e_ng) |\det g| dg \\
&= \int_{P_n \backslash G_n} \int_{N_n \backslash P_n} \int_{V_n} \widetilde{W}\left(\sigma_n\begin{pmatrix}
1 & X \\
0 & 1
\end{pmatrix} \begin{pmatrix}
ph & 0 \\
0 & ph
\end{pmatrix} \sigma_n^{-1}\right) dX dp \widehat{\phi}(e_nh) |\det h| dh.
\end{split}
\end{equation}

De plus,
\begin{equation}
\begin{split}
\int_{N_n \backslash P_n} \int_{V_n} &\widetilde{W}\left(\sigma_n\begin{pmatrix}
1 & X \\
0 & 1
\end{pmatrix} \begin{pmatrix}
ph & 0 \\
0 & ph
\end{pmatrix} \sigma_n^{-1}\right) dX dp \\
&= \widetilde{\beta}\left(\rho\left(\sigma_n \begin{pmatrix}
h & 0 \\
0 & h
\end{pmatrix} \sigma_n^{-1}\right) \widetilde{W}\right) \\
&= \widetilde{\beta}(\widetilde{W}),
\end{split}
\end{equation}
puisque $\widetilde{\beta}$ est $(H_n, \theta^{-1})$-équivariante. D'autre part,
\begin{equation}
\begin{split}
\int_{P_n \backslash G_n}  \widehat{\phi}(e_nh) |\det h| dh &= \int_{F^n} \widehat{\phi}(x) dx \\
&= \phi(0).
\end{split}
\end{equation}

On en déduit que $J(1, \rho(w_{n,n})\widetilde{W}, \widehat{\phi}) = \phi(0)\widetilde{\beta}(\rho(w_{n,n}\widetilde{W})$.
On conclut grâce à la proposition \ref{constbeta}.
\end{proof}

Pour finir la preuve du corollaire, il suffit d'utiliser le lemme \ref{zetabeta} dans la relation \ref{relspec}.
\end{proof}

\subsection{Formule de Plancherel explicite sur $H_n \backslash G_{2n}$}
\label{secPlanchExpl}

On note $Y_n = H_n \backslash G_{2n}$ munie de la mesure quotient. On note $\mathcal{S}(Y_n, \theta)$ l'ensemble des fonctions lisses sur $G_{2n}$, $(H_n, \theta)$-équivariante à gauche et à support compact modulo $H_n$. 

On dispose d'une surjection $f \in \mathcal{S}(G_{2n}) \mapsto \varphi_f \in \mathcal{S}(Y_n, \theta)$ avec
\begin{equation}
\varphi_f(y) = \int_{H_n} f(hy) \theta(h)^{-1} dh,
\end{equation}
pour tout $y \in G_{2n}$. 

Soient $\varphi_1, \varphi_2 \in \mathcal{S}(Y_n, \theta)$, il existe $f_1, f_2 \in \mathcal{S}(G_{2n})$ tels que $\varphi_i = \varphi_{f_i}$ pour $i = 1,2$. On a
\begin{equation}
\label{psf}
(\varphi_1, \varphi_2)_{L^2(Y_n)} = \int_{H_n} f(h) \theta(h)^{-1} dh,
\end{equation}
où $f = f_1 * f_2^{*}$, on note $f_2^*(g) = \overline{f_2(g^{-1})}$. 

En effet,
\begin{equation}
(\varphi_1, \varphi_2)_{L^2(Y_n)} = \int_{Y_n} \int_{H_n \times H_n} f_1(h_1 y) \overline{f_2(h_2 y)} \theta(h_1)^{-1} \theta(h_2) dh_1 dh_2 dy.
\end{equation}

L'intégrale double est absolument convergente. On effectue le changement de variable $h_1 \mapsto h_1h_2$ et on combine les intégrales selon $y$ et $h_2$ en une intégrale sur $G_{2n}$. Ce qui donne
\begin{equation}
\begin{split}
(\varphi_1, \varphi_2)_{L^2(Y_n)} &= \int_{G_{2n}} \int_{H_n} f_1(h_1 y) \overline{f_2(y)} \theta(h_1)^{-1} dh_1 dy \\
&= \int_{H_n} f(h) \theta(h)^{-1} dh,
\end{split}
\end{equation}
puisque $f(h) = \int_{G_{2n}} f_1(h y) \overline{f_2(y)} dy$ et que l'on peut échanger l'ordre d'intégration.

On pose
\begin{equation}
\label{defps}
(\varphi_1, \varphi_2)_{Y_n, \pi} = (f_1, f_2)_{Y_n, \pi} = \int_{H^P_n \cap N_{2n} \backslash H^P_n} \beta\left(W_{f,\pi}(\xi_p,.)\right) \theta(\xi_p) d\xi_p,
\end{equation}
pour tout $\pi \in T(Temp(SO(2n+1)))$.

On note $\mathcal{S}(Y_n, \theta)_\pi$ le quotient de $\mathcal{S}(Y_n, \theta)$ par l'intersection des noyaux de toutes les applications $\mathcal{S}(Y_n, \theta) \rightarrow \pi$ linéaires $G_{2n}$-équivariantes.

\begin{proposition}
Supposons $\pi = T(\sigma)$ avec $\sigma \in Temp(SO(2n+1))$.
La forme sesquilinéaire $(., .)_{Y_n, \pi}$ sur $\mathcal{S}(G_{2n})$ est une forme hermitienne semi-definie positive qui se factorise par $\mathcal{S}(Y_n, \theta)_\pi$.
\end{proposition}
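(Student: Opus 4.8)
The plan is to realize $(\cdot,\cdot)_{Y_n,\pi}$ as the pullback of the positive-definite Whittaker inner product $\langle\cdot,\cdot\rangle$ on $\mathcal{W}(\pi,\psi)$ — the one for which $\mathcal{B}(\pi,\psi)$ is orthonormal — along a single $G_{2n}$-equivariant linear map $\mathcal{L}_\pi$. Once I prove an identity of the form $(\varphi_1,\varphi_2)_{Y_n,\pi}=\langle\mathcal{L}_\pi(\varphi_1),\mathcal{L}_\pi(\varphi_2)\rangle$, hermitianity and positivity become automatic and the factorization is formal. I would define $\mathcal{L}_\pi(\varphi)\in\mathcal{W}(\pi,\psi)$, for $\varphi\in\mathcal{S}(Y_n,\theta)$, as the unique smooth vector such that
\begin{equation}
\langle\mathcal{L}_\pi(\varphi),W'\rangle=\int_{H_n\backslash G_{2n}}\overline{\beta(\rho(g)W')}\,\varphi(g)\,dg,\qquad W'\in\mathcal{W}(\pi,\psi).
\end{equation}
This integral converges absolutely because $\varphi$ is compactly supported modulo $H_n$ and $g\mapsto\beta(\rho(g)W')$ is smooth (Lemme \ref{betann}); the left $(H_n,\theta)$-equivariance of $\beta$ (Proposition \ref{lemmebeta}) makes the integrand well defined on $H_n\backslash G_{2n}$. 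Smoothing the (moderate-growth) Shalika vector against the compactly supported density $\varphi$ produces a genuine smooth vector, so $\mathcal{L}_\pi(\varphi)\in\mathcal{W}(\pi,\psi)$; a change of variable gives $\mathcal{L}_\pi(\rho(x)\varphi)=\pi(x)\mathcal{L}_\pi(\varphi)$, and $\mathcal{L}_\pi$ manifestly depends only on $\varphi$, not on a lift $f$ with $\varphi=\varphi_f$.

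The heart of the argument is the identity $(\varphi_1,\varphi_2)_{Y_n,\pi}=\langle\mathcal{L}_\pi(\varphi_1),\mathcal{L}_\pi(\varphi_2)\rangle$. Writing $\varphi_i=\varphi_{f_i}$ and $f=f_1*f_2^{*}$, I would unfold $\beta$ once more in the definition \eqref{defps} to obtain the double Shalika period $\int\int W_{f,\pi}(\xi_p',\xi_p)\,\theta(\xi_p)^{-1}\theta(\xi_p')\,d\xi_p\,d\xi_p'$ over $H^P_n\cap N_{2n}\backslash H^P_n$. The key input is the Bessel expansion of the matrix-coefficient kernel,
\begin{equation}
W_{f,\pi}(g_1,g_2)=\sum_{W\in\mathcal{B}(\pi,\psi)}(\pi(f^\vee)W)(g_2)\,\overline{W(g_1)},
\end{equation}
coming from $f_\pi(g)=Tr(\pi(g)\pi(f^\vee))$. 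Since $\pi(f^\vee)$ is of finite rank, the right-hand side is in fact a \emph{finite} sum of products $b(g_2)\overline{a(g_1)}$ of Whittaker functions, so the period integrals pass through the expansion with no convergence subtlety, and integrating term by term gives
\begin{equation}
(\varphi_1,\varphi_2)_{Y_n,\pi}=\sum_{W\in\mathcal{B}(\pi,\psi)}\beta(\pi(f^\vee)W)\,\overline{\beta(W)},
\end{equation}
which is the expression recalled in the introduction. Finally I would use $f^\vee=\overline{f_2}*f_1^\vee$, hence $\pi(f^\vee)=\pi(\overline{f_2})\pi(f_1^\vee)$, together with the adjunction $\pi(\overline{f_2})^{*}=\pi(f_2^\vee)$ and Parseval to reorganize this sum as $\langle\pi(f_1^\vee)W_\beta,\pi(f_2^\vee)W_\beta\rangle$; one checks that the vectors $\pi(f_i^\vee)W_\beta$ are exactly the genuine smooth vectors $\mathcal{L}_\pi(\varphi_i)$ built above, which establishes the identity.

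Granting this, the three assertions follow at once. As $\langle\cdot,\cdot\rangle$ is a hermitian inner product, $(\cdot,\cdot)_{Y_n,\pi}$ is hermitian and $(\varphi,\varphi)_{Y_n,\pi}=\|\mathcal{L}_\pi(\varphi)\|^2\ge 0$, so it is positive semidefinite. For the factorization, I first note that the form descends from $\mathcal{S}(G_{2n})$ to $\mathcal{S}(Y_n,\theta)$ because $\mathcal{L}_\pi$ depends only on $\varphi_f$; then, since $\mathcal{L}_\pi\in\Hom_{G_{2n}}(\mathcal{S}(Y_n,\theta),\pi)$, the intersection of the kernels of all $G_{2n}$-equivariant maps $\mathcal{S}(Y_n,\theta)\to\pi$ is contained in $\ker\mathcal{L}_\pi$; as the form is built from $\mathcal{L}_\pi$ in each argument, it vanishes whenever one argument lies in that intersection, hence descends to $\mathcal{S}(Y_n,\theta)_\pi$.

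The main obstacle is the justification of the Bessel expansion together with the interchange of the (only conditionally convergent) Shalika period integrals and the spectral sum: one must control the tempered growth of $W_{f,\pi}$ on $H^P_n$ exactly as in Lemme \ref{convtemp} and Lemme \ref{betann}, and confirm that the finite-rank reduction genuinely removes all convergence issues. A secondary but essential point is verifying that $\mathcal{L}_\pi(\varphi)$ is a smooth vector and not merely a distribution vector, which is precisely where the compact support of $\varphi$ modulo $H_n$ is used. By contrast, the algebraic steps (the adjunctions $\pi(\overline{f})^{*}=\pi(f^\vee)$ and $\pi(f^\vee)=\pi(\overline{f_2})\pi(f_1^\vee)$, and the Parseval reorganization) are routine.
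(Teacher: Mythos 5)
Your proposal is correct in substance, and its computational engine is the same as the paper's: everything reduces to the finite-rank expansion of the kernel $W_{f,\pi}$ in an orthonormal basis for the Whittaker inner product $(\cdot,\cdot)^{Wh}$ — exactly the paper's Lemme \ref{decbase} — substituted into the definition \eqref{defps}, together with the $(H_n,\theta)$-equivariance of $\beta$ (Proposition \ref{lemmebeta}). Where you genuinely differ is the packaging. The paper stops at the identity $(f_1,f_2)_{Y_n,\pi}=\sum_{W'}\beta(W')\overline{\beta(\pi(\overline{f_1})\pi(f_2^\vee)W')}$ and deduces only the well-definedness and the factorization through $\mathcal{S}(Y_n,\theta)_\pi$; hermitianity and positivity are not actually addressed in its proof of this proposition (positivity of the weighted forms only enters later, in the proof of Theorem \ref{thPlanch}, via the general disintegration result \cite[prop 4.1.1]{beuzart-plessis}). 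Your map $\mathcal{L}_\pi$ — concretely, the Riesz vector of the functional $W'\mapsto\beta(\pi(\overline{f})W')$, which is continuous because $\pi(\overline{f})$ has finite-dimensional image, and is $K$-fixed hence smooth when $f$ is bi-$K$-invariant — realizes the form as the pullback $\langle\mathcal{L}_\pi(\varphi_1),\mathcal{L}_\pi(\varphi_2)\rangle$, so hermitianity, semi-positivity and the factorization become formal. That is a real gain in completeness over the paper's own write-up of this statement.

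Two caveats. First, your ``Bessel expansion'' is not a formal consequence of $f_\pi(g)=Tr(\pi(g)\pi(f^\vee))$: writing $W_{f,\pi}(g_1,g_2)=\sum_{W'} W'(g_2)\overline{(\pi(\overline{f})W')(g_1)}$ requires (i) Bernstein's theorem \cite{bernstein} that $(W,W')^{Wh}=\int_{N_{2n}\backslash P_{2n}}W\overline{W'}$ is $G_{2n}$-invariant — this is also what legitimizes your adjunction $\pi(\overline{f_2})^{*}=\pi(f_2^\vee)$, which is not ``routine'' for this particular inner product — and (ii) the Lapid--Mao identity \cite[Lemme 4.4]{lapid-mao} evaluating the regularized integral $\int^*_{N_{2n}}$ of a matrix coefficient as a product of Whittaker values. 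You correctly flag this as the main obstacle, but it is the one step your proof leaves unproved while the paper proves it. Second, for an arbitrary orthonormal basis $\mathcal{B}(\pi,\psi)$ the expansion is not literally a finite sum (infinitely many basis vectors can have nonzero $K$-fixed component); it is finite only for a basis adapted to the decomposition $V_\pi=V_\pi^K\oplus V_\pi(K)$, or, as the paper does, when summing over a finite orthonormal basis of the image of $\pi(f^\vee)$ — a harmless repair, after which the term-by-term integration is justified by the absolute convergence of $\beta$ (Lemme \ref{betann}).
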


\begin{proof}
Commençons par le
\begin{lemme}
\label{decbase}
Soit $\pi \in Temp(G_{2n})$. On introduit un produit scalaire sur $\mathcal{W}(\pi, \psi)$ :
\begin{equation}
(W, W')^{Wh} = \int_{N_{2n} \backslash P_{2n}} W(p)\overline{W'(p)} dp,
\end{equation}
pour tous $W, W' \in \mathcal{W}(\pi, \psi)$.

L'opérateur $\pi(f^{\vee}) : \mathcal{W}(\pi, \psi) \rightarrow \mathcal{W}(\pi, \psi)$ est de rang fini. Notons $\mathcal{B}(\pi, \psi)_f$ une base finie orthonormée de son image. Alors
\begin{equation}
W_{f,\pi} = \sum_{W' \in \mathcal{B}(\pi, \psi)_f} \overline{\pi(\overline{f})W'} \otimes W'.
\end{equation}
\end{lemme}

\begin{proof}
Le produit scalaire $(.,.)^{Wh}$ est $P_{2n}$-équivariant, d'après Bernstein \cite{bernstein}, il est aussi $G_{2n}$-équivariant.

Pour $W \in \mathcal{W}(\pi, \psi)$, la décomposition de $\pi(f^{\vee})W$ selon ce produit scalaire est
\begin{equation}
\label{decpif}
\begin{split}
\pi(f^{\vee})W &= \sum_{W' \in \mathcal{B}(\pi, \psi)_f} (\pi(f^{\vee})W, W')^{Wh}W' \\
&= \sum_{W' \in \mathcal{B}(\pi, \psi)_f} (W, \pi(\overline{f})W')^{Wh}W'.
\end{split}
\end{equation}

Cette égalité nous permet grâce au produit scalaire $(.,.)^{Wh}$ de faire l'identification
\begin{equation}
\pi(f^\vee) = \sum_{W' \in \mathcal{B}(\pi, \psi)_f} W' \otimes \overline{\pi(\overline{f})W'}.
\end{equation}

On en déduit, d'après \ref{decpif}, que
\begin{equation}
\begin{split}
W_{f, \pi}(g_1, g_2) &= \int_{N_{2n}}^* Tr(\pi(g_1^{-1}ug_2)\pi(f^\vee)) \psi(u)^{-1} du \\
& = \sum_{W' \in \mathcal{B}(\pi, \psi)_f} \int_{N_{2n}}^* (\pi(ug_2)W', \pi(g_1)\pi(\overline{f})W')\psi(u)^{-1}du \\
&= \sum_{W' \in \mathcal{B}(\pi, \psi)_f} W'(g_2) \overline{\pi(\overline{f})W'}(g_1),
\end{split}
\end{equation}
pour tous $g_1, g_2 \in G_{2n}$. La dernière égalité provient de \cite[Lemme 4.4]{lapid-mao}.
\end{proof}

La définition \ref{defps} et le lemme \ref{decbase} donne la relation
\begin{equation}
\begin{split}
(f_1, f_2)_{Y_n, T(\sigma)} &= \sum_{W' \in \mathcal{B}(\pi, \psi)_f} \int_{H^P_n \cap N_{2n} \backslash H^P_n} \beta(W') \overline{\pi(\overline{f})W'}(\xi_p)\theta(\xi_p) d\xi_p\\
& = \sum_{W' \in \mathcal{B}(T(\sigma), \psi)_f} \beta(W') \overline{\beta(T(\sigma)(\overline{f}_1)T(\sigma)(f_2^\vee)W')}
\end{split}
\end{equation}
qui ne dépend que de $\varphi_1$ et $f_2$ puisque la restriction de $\beta$ à $\mathcal{W}(T(\sigma), \psi)$ est $(H_n, \theta)$-équivariante, d'après la proposition \ref{lemmebeta}. En échangeant les rôles de $\varphi_1$ et $\varphi_2$, on voit que $(f_1, f_2)_{Y_n, T(\sigma)}$ ne dépend que de $\varphi_1$ et $\varphi_2$. De plus, $(f_1, f_2)_{Y_n, T(\sigma)}$ dépend uniquement de $T(\sigma)(f_1)$ et $T(\sigma)(f_2)$. On en déduit que $(.,.)_{Y_n, \pi}$ se factorise par $\mathcal{S}(Y_n, \theta)_{T(\sigma)}$.
\end{proof}

On remarque pour la suite que l'on a
\begin{equation}
(\varphi_1, \varphi_2)_{Y_n, T(\sigma)} = (f_1, f_2)_{Y_n, T(\sigma)} = (\widetilde{\beta}\otimes\beta)(W_{f,\pi}).
\end{equation}

\begin{theoreme}
\label{thPlanch}
Soient $\varphi_1, \varphi_2 \in \mathcal{S}(Y_n, \theta)$. On a
\begin{equation}
(\varphi_1, \varphi_2)_{L^2(Y_n)} = \int_{Temp(SO(2n+1))/Stab} (\varphi_1, \varphi_2)_{Y_n, T(\sigma)} \frac{|\gamma^*(0, \sigma, Ad, \psi)|}{|S_\sigma|}d\sigma.
\end{equation}
\end{theoreme}

\begin{proof}
D'après \ref{unfolding} et \ref{corolim}, on a
\begin{equation}
\label{intfin}
\begin{split}
\int_{H_n} f(h) \theta(h)^{-1} dh = &\int_{H_n \cap N_{2n} \backslash H^P_n} \int_{Temp(SO(2n+1))/Stab} \beta\left(W_{f,T(\sigma)}(\xi_p,.)\right) \\
& \theta(\xi_p) \frac{\gamma^*(0, \sigma, Ad, \psi)}{|S_\sigma|}c(T(\sigma))^{-1}c_\beta(\sigma) d\sigma d\xi_p.
\end{split}
\end{equation}

\begin{lemme}
La fonction $\sigma \mapsto \beta\left(W_{f,T(\sigma)}(\xi_p,.)\right)$ est à support compact.
\end{lemme}

\begin{proof}
D'après la définition de $f_\pi$, $W_{f,\pi}$ est nul dès que $\pi(f^\vee)$ l'est.

Soit $K$ un sous-groupe ouvert compact (que l'on choisit distingué dans $GL_{2n}(\mathbb{Z}_p)$) tel que $f^\vee$ est biinvariant par $K$. Alors $\pi(f^\vee) \neq 0$, seulement lorsque $\pi$ admet des vecteurs $K$-invariant non nuls.

Pour $M$ un sous-groupe de Levi de $G_{2n}$, d'après Harish-Chandra \cite[Théorème VIII.1.2]{waldspurger}, il n'y a qu'un nombre fini de représentations $\tau \in \Pi_2(M)$ modulo $X^*(M) \otimes i\mathbb{R}$ qui admettent des vecteurs $(K \cap M)$-invariant non nuls.

Comme toute représentation $\pi \in Temp(G_{2n})$ est une induite d'une telle représentation $\tau$ pour un bon choix de sous-groupe de Levi $M$ et que $\pi$ admet des vecteurs $K$-invariant non nuls si et seulement si $\tau$ admet des vecteurs $(K \cap M)$-invariants non nuls, on en déduit le lemme.
\end{proof}

On en déduit que
\begin{equation}
\int_{Temp(SO(2n+1))/Stab} \beta\left(W_{f,T(\sigma)}(\xi_p,.)\right) \theta(\xi_p) \frac{\gamma^*(0, \sigma, Ad, \psi)}{|S_\sigma|}c(T(\sigma))^{-1}c_\beta(\sigma) d\sigma
\end{equation}
est absolument convergente.

De plus, l'intégration extérieure $\int_{H_n \cap N_{2n} \backslash H^P_n} \theta(\xi_p)d\xi_p$ n'est autre que la forme linéaire continue $\widetilde{\beta}$, on en déduit que l'intégrale \ref{intfin} devient
\begin{equation}
\begin{split}
&\widetilde{\beta}\left(x_p \mapsto \int_{Temp(SO(2n+1))/Stab} \beta\left(W_{f,T(\sigma)}(\xi_p,.)\right) \frac{\gamma^*(0, \sigma, Ad, \psi)}{|S_\sigma|}c(T(\sigma))^{-1}c_\beta(\sigma) d\sigma\right) \\
&= \int_{Temp(SO(2n+1))} (\widetilde{\beta}\otimes\beta)(W_{f, T(\sigma)}) \frac{\gamma^*(0, \sigma, Ad, \psi)}{|S_\sigma|} c(T(\sigma))^{-1}c_\beta(\sigma)d\sigma \\
&= \int_{Temp(SO(2n+1))/Stab} (\varphi_1, \varphi_2)_{Y_n, T(\sigma)} \frac{\gamma^*(0, \sigma, Ad, \psi)}{|S_\sigma|} c(T(\sigma))^{-1}c_\beta(\sigma)d\sigma.
\end{split}
\end{equation}

Pour finir, \cite[prop 4.1.1]{beuzart-plessis} nous dit que les formes sesquilinéaires $(\varphi_1, \varphi_2) \mapsto (\varphi_1, \varphi_2)_{Y_n, T(\sigma)} \frac{\gamma^*(0, \sigma, Ad, \psi)}{|S_\sigma|} c(T(\sigma))^{-1}c_\beta(\sigma)$ sont automatiquement définies positives. On en déduit que 
\begin{equation}
\gamma^*(0, \sigma, Ad, \psi) c(T(\sigma))^{-1}c_\beta(\sigma) = |\gamma^*(0, \sigma, Ad, \psi)|.
\end{equation}
\end{proof}

\begin{corollaire}
\label{planchab}
On a une décomposition de Plancherel abstraite sur $L^2(H_n \backslash G_{2n})$ :
\begin{equation}
L^2(H_n \backslash G_{2n}) = \int^{\oplus}_{Temp(SO(2n+1))/Stab} T(\sigma) d\sigma.
\end{equation}
\end{corollaire}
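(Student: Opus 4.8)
The plan is to deduce the abstract decomposition from the explicit Plancherel formula of Theorem \ref{thPlanch} by realizing $L^2(Y_n,\theta)$ as a direct integral whose fibers are the Hilbert completions of the isotypic quotients $\mathcal{S}(Y_n,\theta)_{T(\sigma)}$ with respect to the hermitian forms $(\cdot,\cdot)_{Y_n,T(\sigma)}$. Three ingredients are already in hand: these forms are $G_{2n}$-invariant and factor through $\mathcal{S}(Y_n,\theta)_{T(\sigma)}$ (the proposition preceding Theorem \ref{thPlanch}); they are positive, in fact positive definite, since the end of the proof of Theorem \ref{thPlanch} yields $\gamma^*(0,\sigma,Ad,\psi)\,c(T(\sigma))^{-1}c_\beta(\sigma)=|\gamma^*(0,\sigma,Ad,\psi)|>0$; and the formula itself writes $(\varphi_1,\varphi_2)_{L^2(Y_n)}$ as the integral of $(\varphi_1,\varphi_2)_{Y_n,T(\sigma)}$ against $\frac{|\gamma^*(0,\sigma,Ad,\psi)|}{|S_\sigma|}d\sigma$.

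First I would, for each $\sigma\in Temp(SO(2n+1))/Stab$, let $\mathcal{H}_\sigma$ be the Hilbert completion of $\mathcal{S}(Y_n,\theta)_{T(\sigma)}$ for $(\cdot,\cdot)_{Y_n,T(\sigma)}$, on which $G_{2n}$ acts unitarily, and then identify $\mathcal{H}_\sigma$ with $T(\sigma)$. This is the key point, and it rests on the description $(\varphi_1,\varphi_2)_{Y_n,T(\sigma)}=(\widetilde\beta\otimes\beta)(W_{f,\pi})$ with $\pi=T(\sigma)$: the form is built only from the Whittaker model of $T(\sigma)$ and the Shalika functional $\beta$, and depends on $f_1,f_2$ only through $T(\sigma)(f_1),T(\sigma)(f_2)$. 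Since $\Hom_{H_n}(T(\sigma),\theta)$ is one-dimensional by uniqueness of the Shalika model (used already in Proposition \ref{constbeta}), the representation carried by $\mathcal{H}_\sigma$ is the single irreducible $T(\sigma)$, occurring with multiplicity one, so $\mathcal{H}_\sigma\simeq T(\sigma)$.

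Next I would assemble the fibers. The assignment sending $\varphi$ to the field $\sigma\mapsto$ (image of $\varphi$ in $\mathcal{H}_\sigma$) defines a $G_{2n}$-equivariant linear map from $\mathcal{S}(Y_n,\theta)$ to $\int^{\oplus}_{Temp(SO(2n+1))/Stab}T(\sigma)\,d\sigma$, and the explicit formula says precisely that this map is isometric for the $L^2(Y_n,\theta)$-norm. Since $\mathcal{S}(Y_n,\theta)$ is dense in $L^2(Y_n,\theta)$, the map extends to an isometric $G_{2n}$-equivariant embedding of $L^2(Y_n,\theta)$ into the direct integral.

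The hard part will be surjectivity, i.e. upgrading this isometric embedding to an isomorphism. For this I would show that the image is dense in $\int^{\oplus}T(\sigma)\,d\sigma$: by construction the image of $\mathcal{S}(Y_n,\theta)$ in each fiber $\mathcal{H}_\sigma=T(\sigma)$ is the dense subspace $\mathcal{S}(Y_n,\theta)_{T(\sigma)}$, so the remaining difficulty is purely the direct-integral formalism, namely producing enough measurable sections with controlled dependence on $\sigma$ to saturate the field and to check measurability of $\sigma\mapsto\mathcal{H}_\sigma$. This is exactly the abstract argument carried out for unitary groups in \cite{beuzart-plessis}, which I would follow here; the compact support of $\sigma\mapsto\beta(W_{f,T(\sigma)}(\xi_p,\cdot))$ (the lemma inside the proof of Theorem \ref{thPlanch}) together with the continuity of the Whittaker sections in $\sigma$ supplies the regularity in $\sigma$ needed to conclude.
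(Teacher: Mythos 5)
Your proposal is correct and follows essentially the same route as the paper: the paper's proof is a one-line deduction from Theorem \ref{thPlanch} together with the fact that the density $\frac{|\gamma^*(0,\sigma,Ad,\psi)|}{|S_\sigma|}$ is almost everywhere nonzero, and your argument is simply the detailed working-out of that deduction (fibers as completions of the isotypic quotients, identification with $T(\sigma)$ via uniqueness of the Shalika model, isometric embedding, then saturation following the formalism of \cite{beuzart-plessis}). The strict positivity $|\gamma^*(0,\sigma,Ad,\psi)|>0$ that you invoke plays exactly the role of the paper's ``presque partout non nul'': it is what allows the weighted measure in the explicit formula to be traded for $d\sigma$ by rescaling the fiber inner products.
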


\begin{proof}
C'est une conséquence du théorème \ref{thPlanch} et du fait que $\frac{|\gamma^*(0, \sigma, Ad, \psi)|}{|S_\sigma|}$ est presque partout non nul.
\end{proof}

\subsection{Formule de Plancherel abstraite sur $G_n \times G_n \backslash G_{2n}$}

\begin{lemme}
\label{lemmeiso}
On dispose d'un isomorphisme $G_{2n}$-équivariant d'espaces de Hilbert
\begin{equation}
L^2(G_n \times G_n \backslash G_{2n}) \simeq L^2(H_n \backslash G_{2n}, \theta).
\end{equation}
\end{lemme}

\begin{proof}
On note $\mathcal{S}(G_n \times G_n \backslash G_{2n})$ l'ensemble des fonctions lisse sur $G_{2n}$, $G_n \times G_n$-équivariante et à support compact modulo $G_n \times G_n$.
On considère l'application $f \in \mathcal{S}(H_n \backslash G_{2n}, \theta) \mapsto \widetilde{f} \in \mathcal{S}(G_n \times G_n \backslash G_{2n})$, où $\widetilde{f}$ est définie par
\begin{equation}
\widetilde{f}(g) = \int_{G_n} f\left( \sigma_n \begin{pmatrix}
\gamma & 0 \\
0 & 1_n
\end{pmatrix}g\right) d\gamma
\end{equation}
pour tout $g \in G_{2n}$.

Commençons par montrer que l'application est bien définie. En effet, pour $g' \in G_n$ et $X \in M_n$, on a
\begin{equation}
\begin{pmatrix}
g' & X \\
0 & g'
\end{pmatrix}\begin{pmatrix}
\gamma & 0 \\
0 & 1_n
\end{pmatrix} = \begin{pmatrix}
g' \gamma & X \\
0 & g'
\end{pmatrix}.
\end{equation}

On note $K$ un compact tel que $supp(f) \subset H_nK$. On en déduit que $f\left( \sigma_n \begin{pmatrix}
\gamma & 0 \\
0 & 1_n
\end{pmatrix}g \right)$ est nul sauf si il existe $g' \in G_n$ tel que $\begin{pmatrix}
g' \gamma & X \\
0 & g'
\end{pmatrix} \in \sigma_n^{-1}K$. On en déduit alors que $ \gamma \in G_n \mapsto f\left( \sigma_n \begin{pmatrix}
\gamma & 0 \\
0 & 1_n
\end{pmatrix}g \right)$ est à support compact. L'intégrale est donc absolument convergente. De plus,
pour tous $g_1, g_2 \in G_n$ et $g \in G_{2n}$, on a
\begin{equation}
\begin{split}
\widetilde{f}\left( \begin{pmatrix}
g_1 & 0 \\
0 & g_2
\end{pmatrix}g \right) &= \int_{G_n} f\left( \sigma_n \begin{pmatrix}
\gamma & 0 \\
0 & 1_n
\end{pmatrix}\begin{pmatrix}
g_1 & 0 \\
0 & g_2
\end{pmatrix}g \right) d\gamma \\
&=_{\gamma \mapsto g_2 \gamma g_1^{-1}} \int_{G_n} f\left( \sigma_n \begin{pmatrix}
g_2 & 0 \\
0 & g_2
\end{pmatrix}\begin{pmatrix}
\gamma & 0 \\
0 & 1_n
\end{pmatrix}g \right) d\gamma \\
&= \int_{G_n} f\left( \sigma_n \begin{pmatrix}
\gamma & 0 \\
0 & 1_n
\end{pmatrix}g \right) d\gamma \\
&= \widetilde{f}(g).
\end{split}
\end{equation}

Pour finir, montrons que $\widetilde{f}$ est à support compact modulo $G_n \times G_n$. Grâce à la décomposition d'Iwasawa, écrivons $g$ sous la forme $\begin{pmatrix}
g_1 & x \\
0 & g_2
\end{pmatrix}k$ avec $g_1, g_2 \in G_n$, $x \in M_n$ et $k \in K$. Alors $\widetilde{f}(g) = \widetilde{f}\left( \begin{pmatrix}
1 & y \\
0 & 1
\end{pmatrix}k \right)$ avec $y = g_1^{-1}x$, on a alors
\begin{equation}
\begin{split}
\widetilde{f}(g) &= \int_{G_n} f\left( \sigma_n \begin{pmatrix}
1 & \gamma y \\
0 & 1
\end{pmatrix} \begin{pmatrix}
\gamma & 0 \\
0 & 1
\end{pmatrix} k \right) d\gamma \\
&= \int_{G_n} f\left( \sigma_n \begin{pmatrix}
\gamma & 0 \\
0 & 1
\end{pmatrix} k \right) \psi(Tr(\gamma y))d\gamma.
\end{split}
\end{equation}
Cette dernière intégrale est la transformée de Fourier d'une fonction à support compact sur $M_n$, à savoir la fonction $\phi_k$ définie par $\phi_k(x) = f\left( \sigma_n \begin{pmatrix}
x & 0 \\
0 & 1
\end{pmatrix} k \right)|\det x|^{-n}$ si $x \in G_n$ et $0$ sinon. Le facteur $|\det x|^{-n}$ provient de la transformation de la mesure multiplicative $d\gamma$ en une mesure additive. On en déduit que $\widetilde{f}$ est à support compact modulo $G_n \times G_n$.
Ce qui prouve que l'application $f \in \mathcal{S}(H_n \backslash G_{2n}, \theta) \mapsto \widetilde{f} \in \mathcal{S}(G_n \times G_n \backslash G_{2n})$ est bien définie.

Cette application est linéaire et injective. En effet, si $\widetilde{f} = 0$, alors $\phi_k = 0$ pour tout $k \in K$, donc $f\left( \sigma_n \begin{pmatrix}
\gamma & 0 \\
0 & 1
\end{pmatrix} k \right) = 0$ pour tout $\gamma \in G_n$ et $k \in K$. On en déduit que $f = 0$ car elle est $(H_n, \theta)$-équivariante.

Pour finir, montrons qu'il existe une constante $c > 0$ telle que $||f||_{L^2(H_n \backslash G_{2n}, \theta)} = c||\widetilde{f}||_{L^2(G_n \times G_n \backslash G_{2n})}$. Ce qui prouve que l'application $f \in C^\infty_c(H_n \backslash G_{2n}, \theta) \mapsto \widetilde{f} \in C^\infty_c(G_n \times G_n \backslash G_{2n})$ s'étend en un isomorphisme d'espaces de Hilbert $L^2(H_n \backslash G_{2n}, \theta) \simeq L^2(G_n \times G_n \backslash G_{2n})$.

En effet,
\begin{equation}
\begin{split}
||\widetilde{f}||_{L^2(G_n \times G_n \backslash G_{2n})} &= \int_{M_n \times K} |\widetilde{f}\left( \begin{pmatrix}
1 & x \\
0 & 1
\end{pmatrix} k \right)|^2 dx dk \\
&= \int_{M_n \times K} |\int_{G_n} f\left( \sigma_n \begin{pmatrix}
\gamma & 0 \\
0 & 1
\end{pmatrix} k \right) \psi(Tr(\gamma x)) d\gamma|^2 dx dk \\
&= \int_{M_n \times K} |\widehat{\phi}_k(x)|^2 dx dk.
\end{split}
\end{equation}

La transformé de Fourier conserve la norme $L^2$ avec un choix de constante appropriée, on en déduit qu'il existe une constante $c' > 0$ telle que
\begin{equation}
\begin{split}
||\widetilde{f}||_{L^2(H_n \backslash G_{2n}, \theta)} &= c' \int_{M_n \times K} |\phi_k(x)|^2 dx dk \\
&= c' \int_{K} \int_{G_n} |f\left( \sigma_n \begin{pmatrix}
\gamma & 0 \\
0 & 1
\end{pmatrix} k \right)|^2 \frac{d\gamma}{|\det \gamma|^n} dk.
\end{split}
\end{equation}
On met l'accent sur le fait que l'on a modifié la mesure additive sur $M_n$ restreinte à $G_n$ en une mesure multiplicative sur $G_n$. La mesure $\frac{d\gamma}{|\det \gamma|^n} dk$ est une mesure de invariante sur $G_n K \simeq H_n \backslash G_{2n}$. On en déduit bien qu'il existe une constante $c > 0$ telle que $||f||_{L^2(H_n \backslash G_{2n}, \theta)} = c||\widetilde{f}||_{L^2(G_n \times G_n \backslash G_{2n})}$.
\end{proof}

Cet isomorphisme d'espace $L^2$ nous permet de faire le lien entre les formules de Plancherel sur  $G_n \times G_n \backslash G_{2n}$ et sur $H_n \backslash G_{2n}$. En effet, on dispose du
\begin{theoreme}
Une décomposition de Plancherel abstraite sur $L^2(G_n \times G_n \backslash G_{2n})$ est obtenue par la relation
\begin{equation}
L^2(G_n \times G_n \backslash G_{2n}) = \int^{\oplus}_{Temp(SO(2n+1))/Stab} T(\sigma) d\sigma.
\end{equation}
\end{theoreme}

\begin{proof}
C'est une conséquence du lemme \ref{lemmeiso} et du corollaire \ref{planchab}.
\end{proof}

 \bibliographystyle{siam}
 \bibliography{article}

\end{document}